\newcommand{\vertiii}[1]{{\left\vert\kern-0.25ex\left\vert\kern-0.25ex\left\vert #1
		\right\vert\kern-0.25ex\right\vert\kern-0.25ex\right\vert}}
\newtheorem{theorem}{Theorem}[section]
\newtheorem{corollary}[theorem]{Corollary}
\newtheorem{lemma}[theorem]{Lemma}
\theoremstyle{definition}
\newtheorem{remark}[theorem]{Remark}
\numberwithin{equation}{section}
\newcommand{\nc}{\normalcolor}
\newcommand{\R}{\mathbb{R}}
\definecolor{darkblue}{rgb}{0.05, .05, .65}
\definecolor{darkgreen}{rgb}{0.1, .65, .1}
\definecolor{darkred}{rgb}{0.8,0,0}
\title{Characterisation of homogeneous fractional Sobolev spaces}
\author[Brasco]{Lorenzo Brasco}
\address[L.\ Brasco]{Dipartimento di Matematica e Informatica
\newline\indent
Universit\`a degli Studi di Ferrara
\newline\indent
Via Machiavelli 30, 44121 Ferrara, Italy}
\email{lorenzo.brasco@unife.it}
\author[Gómez-Castro]{David G\'omez-Castro}
\address[D.\ G\'omez-Castro]{
Mathematical Institute,
\newline\indent
University of Oxford,
\newline\indent
Radcliffe Observatory Quarter, Woodstock Road
\newline\indent
Oxford OX2 6GG, UK
\newline\indent
\textup{and}
\newline\indent
Instituto de Matem\'atica Interdisciplinar
\newline\indent
Universidad Complutense de Madrid
\newline\indent
Plaza de Ciencias 3, 28040 Madrid, Spain}
\email{gomezcastro@maths.ox.ac.uk}
\author[V\'azquez]{Juan Luis V\'azquez}
\address[J.\ L.\ V\'azquez]{Departamento de Matem\'aticas
\newline\indent
Universidad Aut\'onoma de Madrid,
\newline\indent
Ciudad Universitaria de Cantoblanco, 28049 Madrid, Spain}
\email{juanluis.vazquez@uam.es}
\subjclass[2010]{46E35}
\keywords{Fractional Sobolev spaces, Gagliardo-Slobodecki\u{\i} norms, embeddings, H\"older spaces, BMO space, Campanato spaces.}
\begin{document}
	\maketitle

\begin{abstract}
 Our aim is to characterize the homogeneous fractional Sobolev-Slobodecki\u{\i} spaces $\mathcal{D}^{s,p} (\mathbb{R}^n)$ and their embeddings, for $s \in (0,1]$ and $p\ge 1$. They are defined as the completion of the set of smooth and compactly supported test functions with respect to the Gagliardo-Slobodecki\u{\i} seminorms.
 For $s\,p < n$ or $s = p = n = 1$ we show that $\mathcal{D}^{s,p}(\mathbb{R}^n)$ is isomorphic to a suitable function space, whereas for $s\,p \ge n$ it is isomorphic to a space of equivalence classes of functions, differing by an additive constant. As one of our main tools, we present a Morrey-Campanato inequality where the Gagliardo-Slobodecki\u{\i} seminorm controls from above a suitable
 Campanato seminorm.
\end{abstract}

\maketitle

\vspace{2cm}

\begin{center}
\begin{minipage}{10cm}
\small
\tableofcontents
\end{minipage}
\end{center}

\newpage

\section{Introduction}

\subsection{Fractional Sobolev spaces}
The aim of this paper is to shed light on an important topic in the theory  of  fractional Sobolev spaces. This family of spaces is conveniently presented e.g. in \cite{AdamsFourn2003, Leoni2009Sobolev, maz, Tartar2007}. It is common to define the fractional Sobolev  spaces  $W^{s,p} (\mathbb{R}^n)$ in the Sobolev-Slobodecki\u{\i} form. Thus, for $s \in (0,1)$
and $1\le p<+\infty$ we define the normalized
Gagliardo-Slobodecki\u{\i} seminorm by
\[
[u]_{W^{s,p} (\mathbb{R}^n)} =
\left(s\, (1-s) \,\iint_{\mathbb{R}^n\times\mathbb{R}^n} \frac{|u(x) - u(y)|^p}{|x-y|^{n+s\,p}}\, d x\, d y\right)^\frac{1}{p}.
\]
Then,
\begin{equation}
\label{wsp}
W^{s,p}(\mathbb{R}^n)\vcentcolon=\Big \{u\in L^p(\mathbb{R}^n)\, : \, [u]_{W^{s,p} (\mathbb{R}^n)}<+\infty\Big \},
\end{equation}
is a Banach space endowed with the non-homogeneous norm
\begin{equation*}
\|u \|_{W^{s,p} (\mathbb{R}^n)} = \|u \|_{L^p (\mathbb{R}^n)} + [u]_{W^{s,p} (\mathbb{R}^n)} .
\end{equation*}
A word about the convenience of the extra factor $s\,(1-s)$ is in order. Indeed, $[\,\cdot\,]_{W^{s,p} (\mathbb{R}^n)}$ can be thought as a real interpolation quantity, with parameter $s$, between the two quantities
\[
\int_{\mathbb{R}^n} |u|^p\,dx\qquad \mbox{ and }\qquad \int_{\mathbb{R}^n} |\nabla u|^p\,dx,
\]
see for example \cite{Brasco2019} or \cite{DTGVTaylor20}.
It is then natural to expect the following asymptotic behaviour
\[
\iint_{\mathbb{R}^n\times\mathbb{R}^n} \frac{|u(x) - u(y)|^p}{|x-y|^{n+s\,p}}\, d x\, d y\sim \frac{C}{s}\,\int_{\mathbb R^n} |u|^p\,dx,\qquad\mbox{ for } s\searrow 0,
\]
and
\[
\iint_{\mathbb{R}^n\times\mathbb{R}^n} \frac{|u(x) - u(y)|^p}{|x-y|^{n+s\,p}}\, d x\, dy\sim \frac{C}{1-s}\,\int_{\mathbb R^n} |\nabla u|^p\,dx,,\qquad\mbox{ for } s\nearrow 1,
\]
see \cite{MS} for the first result and \cite{bourgain} for the second one. For this reason, the factor $s\,(1-s)$ is incorporated in the definition of the seminorm and the limit cases $s=0$ and $s=1$
are defined accordingly by
\[
[u]_{W^{0,p} (\mathbb{R}^n)} = \|  u \|_{L^p (\mathbb{R}^n)}\qquad \mbox{ and }\qquad [u]_{W^{1,p} (\mathbb{R}^n)} = \| \nabla u \|_{L^p (\mathbb{R}^n)}.
\]
There are well-known embeddings of these spaces $W^{s,p} (\mathbb{R}^n)$ into $L^q(\mathbb{R}^n)$ for suitable $q\ge 1$ for which we refer to the classical monographs like \cite{AdamsFourn2003, Leoni2009Sobolev}.
\par
The particular case $s=(p-1)/p$ has a peculiar theoretical importance, since in this case
\[
W^{\frac{p-1}{p},p}(\mathbb{R}^n),
\]
can be identified with the {\it trace space} of functions in $W^{1,p}(\mathbb{H}^{n+1}_+)$, where
$\mathbb{H}^{n+1}_+=\mathbb{R}^n\times [0,+\infty)$.
More generally, it can be proved that $W^{s,p}(\mathbb{R}^n)$ coincides with the trace space of the weighted Sobolev space $\mathcal{W}_s^{1,p}(\mathbb{H}^{n+1}_+)$,
defined as
\[
\mathcal{W}_s^{1,p}(\mathbb{H}^{n+1}_+)=\Big\{u\in L^1_{\rm loc}(\mathbb{H}^{n+1}_+) \, : \, u\,y^{\frac{(p-1)-s\,p}{p}}\in L^p(\mathbb{H}^{n+1}_+),\  |\nabla u|\,y^{\frac{(p-1)-s\,p}{p}}\in L^p(\mathbb{H}^{n+1}_+)\Big\},
\]
where we have used the notation $(x,y)\in \mathbb{H}^{n+1}_+$, with $x\in \mathbb{R}^n$ and $y\in [0,+\infty)$. See \cite[Section 5]{Lio} for more details. The reader may also consult the recent paper \cite{MR}, containing some generalizations.

\subsection{Motivation for the homogeneous Sobolev spaces}

Before we present  the main results of this paper, we discuss some motivations for the study of {a particular} class of {fractional Sobolev} spaces.
Recently, there has been a surge of interest towards the study of nonlocal elliptic operators, that arise as first variations of Gagliardo-Slobodecki\u{\i} seminorms. The leading example is given by the {\it fractional Laplacian of order $s$} of a function $u$, indicated by the symbol $(-\Delta)^s u$, which in weak form reads as
\[
\langle(-\Delta)^s u, \varphi\rangle =
\iint_{\mathbb{R}^n\times\mathbb{R}^n} \frac{(u(x) - u(y))\,(\varphi(x)-\varphi(y))}{|x-y|^{n+2\,s}}\, d x\, d y,
\]
for all $\varphi\in C^\infty_c(\mathbb{R}^n)$, up to a possible normalization factor.
Observe that this is nothing but the first variation of the functional
\[
u\mapsto \frac{1}{2}\,[u]_{W^{s,2} (\mathbb{R}^n)}^2,
\]
up to the factor $s\,(1-s)$.
More generally, one could take a general exponent $1<p<+\infty$ and obtain accordingly the {\it fractional $p$-Laplacian of order $s$} of a function $u$, which we denote $(-\Delta_p)^s u$, and is defined in weak form by
\[
\langle(-\Delta_p)^s u, \varphi\rangle =
\iint_{\mathbb{R}^n\times\mathbb{R}^n} \frac{|u(x) - u(y)|^{p-2}\,(u(x)-u(y))\,(\varphi(x)-\varphi(y))}{|x-y|^{n+s\,p}}\, dx\, d y,
\]
for all $\varphi\in C^\infty_c(\mathbb{R}^n)$, up to a multiplicative factor.
This is a nonlocal and nonlinear operator which has been extensively studied in recent years, see \cite{Vaz16, Vaz20} and references therein.
\normalcolor
In order to motivate the studies performed in this paper, let us consider
the quasilinear nonlocal elliptic problem
\[
(-\Delta_p)^s u=f,\qquad \mbox{ in }\mathbb{R}^n,
\]
under suitable assumptions on the source term $f$. In order to prove existence of a weak solution, it would be natural
to use the Direct Method in the Calculus of Variations. This would lead to the problem of minimizing the energy functional
\begin{equation}
\label{functional}
u\mapsto \frac{1}{p}\,[u]_{W^{s,p} (\mathbb{R}^n)}^p-\int_{\mathbb{R}^n} f\,u\,dx,
\end{equation}
which is naturally associated to our equation. However, it is not clear the functional space where this minimization problem should be posed. For example, one could try to pose the problem in the  previously introduced space $W^{s,p}(\mathbb{R}^n)$, but it is easily seen that this does not fit at all. Indeed, the functional \eqref{functional} {\it is not weakly coercive} on this space, unless we are in the trivial situation $f\equiv 0$. Of course, the problem is that the functional \eqref{functional} can not permit to infer any control on the $L^p$ norm of minimizing sequences. This in turn is related to the fact that the Poincar\'e inequality
\[
c\,\int_{ \mathbb{R}^n }|u|^p\,dx  \le [u]^p_{W^{s,p} ( \mathbb{R}^n )}
\]
{\it fails to be true on the whole $\mathbb{R}^n$} for any $c>0$. This can be easily seen by using the invariance of $\mathbb{R}^n$ with respect to scalings $x\mapsto \lambda\,x$ and a simple dimensional analysis of the two norms.

\medskip

It turns out that the natural spaces to work with are the {\it homogeneous Sobolev spaces} $\mathcal{D}^{s,p} (\mathbb{R}^n)$. They are defined by
\[
\mathcal{D}^{s,p} (\mathbb{R}^n)  \vcentcolon= ``\text{completion of } C_c^\infty (\mathbb{R}^n) \text{ with respect to } [\,\cdot\,]_{W^{s,p}(\R^n)} ".
\]
We recall that $W^{s,p}(\mathbb{R}^n)$ and $\mathcal{D}^{s,p}(\mathbb{R}^n)$ are particular instances of  the huge family of {\it Besov spaces}. In this respect, we stress that a mention of homogeneous Besov spaces can be found for example in \cite[Remark 7.68]{AdamsFourn2003}, \cite[Chapter 6, Section 3]{BL}, \cite[Chapter 10, Section 1]{maz} and \cite[Chapter 3, Section 4]{Tri}, among others.

The notation $\mathcal{D}^{s,p}(\mathbb{R}^n)$ adopted here is reminiscent of the historical one, introduced by Deny and Lions in their paper \cite{DL}. This reference has been among the first papers to study homogeneous spaces, obtained by completion of  $C^\infty_c(\mathbb{R}^n)$.

Another frequently encountered notation for homogeneous Sobolev spaces is $\dot W^{s,p}(\mathbb{R}^n)$, see for example Petree's paper \cite{Pe}. However, usually this notation is used for spaces of functions identified modulo constants for $s\in(0,1]$. We will adopt the same convention in this paper.

It is our aim to examining the space $\mathcal{D}^{s,p}(\mathbb{R}^n)$ more closely and connect it with spaces of the type $\dot W^{s,p}(\mathbb{R}^n)$, as we will explain in a moment.

\subsection{Completions} Before presenting the main results of the paper, let us briefly recall some basic facts about the completion process. We start from the seminorm $[\,\cdot\,]_{W^{s,p} (\mathbb{R}^n)} $ for $0<s \le 1$ and $1<p<\infty$. It is not difficult to see that this turns out to be a norm on the space $C_c^\infty (\mathbb{R}^n)$.
\par
However, the normed space
\[
\left(C_c^\infty(\mathbb{R}^n), [\,\cdot\,]_{W^{s,p}(\mathbb{R}^n)}\right),
\]
is not complete.
By definition, its completion is the quotient space of the set of sequences $(u_m)_{m\in\mathbb{N}} \subset C_c^\infty (\mathbb{R}^n)$
which are Cauchy for the norm $[\,\cdot\,]_{W^{s,p} (\mathbb{R}^n)}$, under the expected equivalence relation
\[
(u_m)_{m\in\mathbb{N}} \sim_{s,p} (v_m)_{m\in\mathbb{N}}\qquad \mbox{  if  }\qquad \lim_{m\to\infty}[u_m - v_m]_{W^{s,p} (\mathbb{R}^n)}=0.
\]
For each equivalence class $U = \{(u_m)_{m\in\mathbb{N}}\}_{s,p}\in\mathcal{D}^{s,p}(\mathbb{R}^n)$, we define its norm in terms of a representative as
\[
\left\| U \right\|_{\mathcal{D}^{s,p} (\mathbb{R}^n)} \vcentcolon= \lim_{m\to\infty} [u_m]_{W^{s,p} (\mathbb{R}^n)}.
\]
It is easily seen that such a definition is independent of the  chosen representative.
With this construction,
\[
\left(\mathcal{D}^{s,p} (\mathbb{R}^n), \|\,\cdot\,\| _{\mathcal{D}^{s,p} (\mathbb{R}^n)} \right)
\]
is a Banach space.
Note that all functions $u \in C_c^\infty (\mathbb{R}^n)$ can be naturally embedded into $\mathcal{D}^{s,p} (\mathbb{R}^n)$ by the constant sequence $u_m = u$. By definition, this representation of $C_c^\infty (\mathbb{R}^n)$ is dense in $\mathcal{D}^{s,p} (\mathbb{R}^n)$.

\smallskip
In the case $s=1$, it is well-known that $\mathcal{D}^{1,p}(\mathbb{R}^n)$ is a subspace of the space of distributions $\mathcal{D}'(\mathbb{R}^n)$ if and only if $1\le p<n$. In this case, this can be identified with the functional space
\[
\Big\{u\in L^{p^*}(\mathbb{R}^n)\, :\, \nabla u\in L^p(\mathbb{R}^n;\mathbb{R}^n)\Big\},\qquad \mbox{ where } p^*=\frac{n\,p}{n-p},
\]
thanks to the celebrated Sobolev inequality. The case $p=2$ is contained in Deny and Lions, \cite[Th\'eor\`eme 4.4 and Remark 4.1]{DL}. The general case can be found for example in \cite[Chapter 15]{maz}.
On the contrary, the case $p\ge n$ is much more delicate, since in this case $\mathcal{D}^{1,p}(\mathbb{R}^n)$ is not even a subspace of  $\mathcal{D}'(\mathbb{R}^n)$.  A concrete characterization of $\mathcal{D}^{1,p}(\mathbb{R}^n)$ as a space of equivalence classes of functions modulo constants seems to belong to the folklore on the subject, though we have not been able to find a proper reference in the literature. Our presentation will cover this case, as well.

\subsection{Main results: the three ranges}
The main question we address in these notes is the characterization of $\mathcal{D}^{s,p}(\mathbb{R}^n)$ and the study of some of its embeddings into suitable sets of functions. This will be possible for some exponents, while for other exponents the embedding occurs into a space of equivalence classes of functions modulo constants, as we are now going to explain.

More precisely, in order to answer to these questions, we will need to distinguish three cases, according to the different behaviours of
\[
u\mapsto [u]_{W^{s,p}(\mathbb{R}^n)},
\]
with respect to scalings of the form $x\mapsto \lambda\,x$, with $\lambda>0$.
By a simple change in variable, we have that
\begin{equation}
\label{eq:scaling}
[u_\lambda]_{W^{s,p}(\mathbb{R}^n)}= \lambda^{s - \frac n p } [u]_{W^{s,p}(\mathbb{R}^n)},\qquad \mbox{ for every } \lambda>0, \mbox{ where } u_\lambda(x)=u(\lambda\,x),
\end{equation}
This shows that the relation between $s\,p$ and $n$ provides significantly different results. Consequently, there are three different situations:

\subsubsection*{\sc Subconformal case $s\,p<n$} The natural inclusion of $\mathcal{D}^{s,p}(\mathbb{R}^n)$ into a Lebesgue space of functions succeeds. More precisely, the completion can be identified with a functional space, i.\,e., we have
\[
\mathcal{D}^{s,p}(\mathbb{R}^n)\simeq \dot W^{s,p}(\mathbb{R}^n)\vcentcolon=\Big\{  u \in L^{p^\star_s} ( \mathbb{R}^n )\, :\,  [ u ]_{W^{s,p} (\mathbb{R}^n)} < +\infty  \Big\}, \qquad \mbox{ where } \, p^\star_s = \frac{n\,p}{n-s\,p},	
\]	
see Theorem \ref{thm:sp < n} below. The main tool here is the fractional Sobolev inequality.
	This mimics in some sense what happens for $W^{s,p} (\mathbb{R}^n)$, that can be characterized by \eqref{wsp}
as the completion of $C_c^\infty (\mathbb{R}^n)$ with  respect to the  $W^{s,p} (\mathbb{R}^n)$ norm.

\subsubsection*{\sc Superconformal case $s\,p > n$} Here, the Sobolev inequality is not available and we have to replace it by {\it Morrey's inequality}, see equation \eqref{eq:Morrey} below. However, unlike the case $s\,p<n$, the elements in $\mathcal{D}^{s,p} (\mathbb{R}^n)$ {\it can not} be uniquely represented by functions.
Indeed, when $s\,p > n$, there exist sequences $(\varphi_m)_{m\in\mathbb{N}}$ such that
\begin{equation}
\label{successionem}
[ \varphi_m ] _{W^{s,p} (\mathbb{R}^n)} \to 0
\qquad \text{ and } \qquad
\varphi_m \to 1 \text{  uniformly over compact sets},
\end{equation}
as $m$ goes to $\infty$. These sequences are known as \emph{null-sequences}.
Hence, any sequence $(u_m)_{m\in\mathbb{N}}\subset C_c^\infty (\mathbb{R}^n)$ which is Cauchy in the norm $[\,\cdot\,]_{W^{s,p} (\mathbb{R}^n)}$ is equivalent to the sequence
 \[
 v_m = u_m + C\, \varphi_m,
 \]
for any constant $C\in\mathbb{R}$.
\nc
Observe that this implies in particular that now {\it all constant functions are equivalent to the null one in} $\mathcal{D}^{s,p}  (\mathbb{R}^n)$.

Furthermore, one can show that functions that are approximated by equivalent Cauchy sequences actually coincide up to a constant. This allows to show that $\mathcal{D}^{s,p}(\mathbb{R}^n)$ can be identified with a space of equivalence classes of H\"older continuous functions differing by an additive constant, i.\,e. we have
\[
\mathcal{D}^{s,p}(\mathbb{R}^n)\simeq \dot W^{s,p} (\mathbb{R}^n) \vcentcolon=  \frac{\Big\{  u \in C^{0,s-\frac{n}{p}}(\mathbb{R}^n)\, :\, [ u ]_{W^{s,p} (\mathbb{R}^n)} < +\infty  \Big\}}{\sim_C},
\]
where for $0<\alpha\le 1$
\[
C^{0,\alpha}(\mathbb{R}^n)=\left\{u:\mathbb{R}^n\to\mathbb{R}\,:\, \sup_{x\not=y}\frac{|u(x)-u(y)|}{|x-y|^\alpha} < +\infty\right\},
\]
and $\sim_C$ is the equivalence relation
\begin{equation}
\label{eq:equivalence up to constant}
u\sim_C v \qquad \Longleftrightarrow\qquad u-v \mbox{ is constant}.
\end{equation}
We refer to Theorem \ref{thm:sp > n} below, for complete details.

\subsubsection*{\sc Conformal case $s\,p = n$.} This is the most delicate case. Whenever $s<n$ (i.\,e., unless $s=n=1$), it is still possible to prove existence of a sequence $(\varphi_m)_{m\in\mathbb{N}}$ such that properties \eqref{successionem} hold. However, the construction of such a sequence is now more involved. Since the seminorm is scale invariant in this case, such a construction can not be just based on scalings. As in the local case $s=1$, one has to consider a suitable sequence of truncated and rescaled logarithms (see Lemma \ref{lem:confo} below). This is reminiscent of the optimal sequence for the Moser-Trudinger inequality, see for example \cite[Section 5]{Parini2019} for the fractional case.

This permits to show that also in the case $s\,p=n$ (provided $s < n$),
we can approximate the constant functions by functions in the null class. Hence, this case behaves like $s\,p > n$
and $\mathcal{D}^{s,n/s}(\mathbb{R}^n)$ can be identified with a space of equivalence classes of $BMO$ functions differing by an additive constant, i.e. we have
\[
\mathcal{D}^{s,\frac{n}{s}}(\mathbb{R}^n)\simeq \dot W^{s,\frac{n}{s}} (\mathbb{R}^n) \vcentcolon= \frac{\Big\{  u \in BMO(\mathbb{R}^n)\, :\, [ u ]_{W^{s,\frac{n}{s}} (\mathbb{R}^n)} < +\infty  \Big\}}{\sim_C},
\]
see Theorem \ref{thm:sp = n} below.

  Still, there will be room for a small surprise. Indeed, we will show that the limiting case $s=p=n=1$, which still falls in the conformal regime, behaves like $s\,p < n$. In other words, the homogeneous Sobolev space $\mathcal{D}^{1,1}(\mathbb{R})$ is actually a function space and we have
\[
\mathcal{D}^{1,1}(\mathbb{R})
\simeq \dot W^{1,1} (\mathbb{R}^n) \vcentcolon=
 \left\{  u  \in C_0 (\R)  : \int_{\mathbb{R}} |u'|\,dx < +\infty \right \},
\]
where $C_0(\mathbb{R})$  is the space  of continuous functions vanishing at infinity, see Theorem \ref{teo:spn1} below.
In this way we complete the characterization of the spaces.

\begin{remark}
After completing this work, we became aware of the interesting recent paper \cite{MPS}, dealing with the same issue here addressed, but for a different scale of fractional Sobolev spaces. Namely, the authors of \cite{MPS} deal with the so-called {\it Bessel potential spaces} (sometimes also called {\it Liouville spaces}), which are defined in terms of the Fourier transform. In \cite[Theorem 2]{MPS}, they give a concrete realization of the homogeneous version of these spaces.
\par
Our results only partially superpose with those of \cite{MPS} and, in any case, the proofs are different.
Indeed, the results of \cite{MPS} are based on Harmonic Analysis techniques and contain ours only for the case $p=2$ and $0<s<1$, and for $1<p<\infty$ and $s=1$. 
\end{remark}

\subsection{Plan of the paper}
We start with Section \ref{sec:2}, where some basic facts about $BMO$ and Campanato spaces are recalled. These tools are particularly useful to handle the cases $s\,p\ge n$.
In this part, an important result is Theorem \ref{teo:campanatoholder}, which relates the Gagliardo-Slobodecki\u{\i} seminorm and a Campanato seminorm.
This yields as corollaries several important inequalities: a fractional Poincaré--Wirtinger inequality (see Corollary \ref{coro:poincare}) and the fractional Morrey inequality for $s\, p > n$ (see Corollary \ref{coro:morrey}).
\par
We devote  \Cref{sec:3}, \Cref{sec:4} and \Cref{sec:5}  to prove the characterisation of $\mathcal D^{s,p}$  in the cases $s\,p<n$,  $s\,p>n$, and $s\,p=n$, respectively. We introduce in each case suitable structural lemmas.
\par
We conclude the paper with two appendices on approximation lemmas, which will allow to show that the elements in $\dot W^{s,p} (\mathbb R^n)$ can be approximated by functions in $C_c^\infty (\mathbb{R}^n)$. The aim is to approximate a function $u$ in the Gagliardo--Slobodecki\u{\i} seminorm by sequences of the type $(u \ast \rho_m)\, \eta_m$, where $\rho_m$ are standard mollifiers and $\eta_m$ are cut-off functions.
In Appendix \ref{sec:convolution} we study the convolution, while in Appendix \ref{sec:6} we prove several
{\it truncation lemmas}, which allow to estimate the effect of multiplying by cut-off functions.

\section{Preliminaries}
\label{sec:2}

\subsection{BMO and Campanato spaces}
At first, we need to recall definitions and some basic facts about {\it bounded mean oscillation functions} and {\it Campanato spaces}. As already announced, this will be particularly useful to deal with the cases $s\,p>n$ and $s\,p=n$. We will indicate by $B_r(x_0)$ the $n$-dimensional open ball with center $x_0\in\mathbb{R}^n$ and radius $r>0$. The symbol $\omega_n$ will stand for the measure of $B_1(0)$.
\medskip

A function of bounded mean oscillation is a locally integrable function $u$ such that the supremum of its mean oscillations is finite. More precisely, for every $u\in L^1_{\rm loc}(\mathbb{R}^n)$ we define
$$
[u]_{BMO(\mathbb R^n)} = \sup_{ x_0 \in \mathbb R^n, \varrho > 0}
\frac1{|B_\varrho (x_0)|}\int_{B_\varrho (x_0) }|u(x)-u_{x_0, \varrho} |\,dx,
$$
where
\[
u_{x_0,\varrho}=\frac{1}{|B_\varrho(x_0)|}\,\int_{B_\varrho(x_0)} u\,dx.
\]
Then we define the space of functions with bounded mean oscillation as
\[
BMO(\mathbb{R}^n)=\Big\{u\in L^1_{\rm loc}(\mathbb{R}^n)\, :\, [u]_{BMO(\mathbb{R}^n)}<+\infty\Big\}.
\]
 This space was introduced by John and Nirenberg in \cite{JN}.
The $BMO$ space is a borderline space, which plays a key  role  in different areas of Mathematical Analysis, as a natural replacement of $L^\infty(\mathbb{R}^n)$ in a large number of results, for instance in interpolation. 	Fefferman and Stein characterized this space as the dual of the Hardy space $\mathcal{H}^1$, see \cite[Theorem 1]{Feff71} and \cite[Theorem 2]{FeSt}. Another important appearance of this space is in Elliptic Regularity Theory: indeed, the logarithm of a positive local solution to an elliptic partial differential equation is a locally $BMO$ function. This observation is a crucial step in the classical proof by Moser of Harnack's inequality, see \cite{Mo}.

It turns out that the $BMO$ space can be seen as a particular instance of the larger family of \ {\sl Campanato spaces}, see \cite{Cam63}.	For $1\le p<+\infty$ and $0\le \lambda\le n+p$, for every $u\in L^p_{\rm loc}(\mathbb{R}^n)$ we define  the seminorm
\[
[u]_{\mathcal{L}^{p,\lambda}(\mathbb{R}^n)}=\left(\sup_{x_0\in\mathbb{R}^n,\varrho>0} \varrho^{-\lambda}\,\int_{B_\varrho(x_0)} |u-u_{x_0,\varrho}|^p\,dx\right)^\frac{1}{p}=
\sup_{x_0\in\mathbb{R}^n,\varrho>0} \varrho^{-\lambda/p}\left(\,\int_{B_\varrho(x_0)} |u-u_{x_0,\varrho}|^p\,dx\right)
^\frac{1}{p}.
\]
Accordingly, we introduce the Campanato space
\[
\mathcal{L}^{p,\lambda}(\mathbb{R}^n)=\Big\{u\in L^p_{\rm loc}(\mathbb{R}^n)\, :\, [u]_{\mathcal{L}^{p,\lambda}(\mathbb{R}^n)}<+\infty\Big\}.
\]
Notice that since $|B_\varrho(x_0)| = \omega_n\, \varrho^n$ we have
	\begin{equation*}
		[u] _{ BMO(\mathbb R^n)}  = \frac{1}{\omega_n}\, [u]_{\mathcal L^{1,n} (\mathbb R^n)}, \qquad
BMO(\mathbb R^n)  =\mathcal L^{1,n} (\mathbb R^n).
	\end{equation*}
	We recall that for $n<\lambda\le n+p$, we have
\begin{equation}
\label{campanatoholder}
\frac{1}{C}\,[u]_{C^{0,\alpha}(\mathbb{R}^n)}\le [u]_{\mathcal{L}^{p,\lambda}(\mathbb{R}^n)}\le C\,[u]_{C^{0,\alpha}(\mathbb{R}^n)},\qquad \mbox{ with } \alpha=\frac{\lambda-n}{p},
\end{equation}
see \cite[Chapter 2, Section 3]{Gi}. The constant $C=C(\lambda,n,p)>0$ blows-up as $\lambda\searrow n$.

\smallskip

\noindent We must point out that both $[\,\cdot\,]_{BMO(\mathbb{R}^n)}$ and $[\,\cdot\,]_{\mathcal{L}^{p,\lambda}(\mathbb{R}^n)}$
are only seminorms on their relevant spaces, as they do not detect constants, i.e. the seminorm of constant functions is zero. In order to get a normed space (actually, a Banach space), we need to consider their homogeneous version defined as quotient spaces
\begin{equation*}
\dot{BMO}(\mathbb{R}^n)=\frac{BMO (\mathbb R^n)}{\sim_C}\qquad \mbox{ and }\qquad \dot {\mathcal{L}}^{p,\lambda}(\mathbb{R}^n)=\frac{\mathcal{L}^{p,\lambda}(\mathbb{R}^n)}{\sim_C},
\end{equation*}
with the equivalence relation
\begin{equation*}
\label{eq:equivalence relation up to constant}
u\sim_C v \qquad \Longleftrightarrow\qquad u-v \mbox{ is constant almost everywhere}.
\end{equation*}
We will denote by $\{ u\}_{C}$ the class of $u$ with respect to this relation.

An interesting result, which can be found for example in \cite[Proposition 2.5 and Corollary 2.3]{Gi}, says that  	when $\lambda = n$ all the Campanato spaces are isomorphic and we have
\begin{equation}\label{BMOL}
	\dot{BMO}(\mathbb{R}^n) \simeq  \dot {\mathcal{L}}^{p,n}(\mathbb{R}^n), \qquad \mbox{ for every } 1\le p<+\infty.
\end{equation}
On the contrary, for $\lambda\not=n$	 the spaces $\mathcal L^{p,\lambda}(\mathbb{R}^n)$ and $\mathcal{L}^{q,\lambda}(\mathbb{R}^n)$ do not coincide, for $p\not =q$.

	\begin{remark} As pointed out in \cite[Chapter 4, \S 1.1.1]{Stein}, the definition of the $BMO$ space can be equivalently given by taking hypercubes instead of balls. The same remark applies to Campanato spaces.
	\end{remark}


\subsection{Weighted integrability for some Campanato spaces}

Functions belonging to $\mathcal{L}^{p,n}(\mathbb{R}^n)$ enjoy a suitable weighted global integrability condition. More precisely, we have the following
\begin{lemma}
	\label{lm:FeSt}
	Let $1\le p<+\infty$ and $R>0$. For every $u\in \mathcal{L}^{p,n}(\mathbb{R}^n)$ such that
	\[
	\int_{B_R(0)} u\,dx=0,
	\]
	we have
	\begin{equation}
	\label{fantasy}
	\int_{\mathbb{R}^n} \frac{|u|^p}{R^n+|x|^n\,\left|\log\dfrac{|x|}{R}\right|^{p+2}}\,dx\le C\,[u]^p_{\mathcal{L}^{p,n}(\mathbb{R}^n)},
	\end{equation}
	for some constant $C=C(n,p)>0$.
\end{lemma}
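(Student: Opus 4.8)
The plan is to reduce to the normalized value $R = 1$ and then exploit the classical logarithmic growth of functions with bounded mean oscillation on dyadic balls. \emph{Step 1 (Scaling).} Since $\lambda = n$, the Campanato seminorm is scale invariant: a change of variables in each ball shows that $v \vcentcolon= u(R\,\cdot\,)$ satisfies $[v]_{\mathcal{L}^{p,n}(\mathbb{R}^n)} = [u]_{\mathcal{L}^{p,n}(\mathbb{R}^n)}$. Moreover $\int_{B_R(0)} u\,dx = 0$ is equivalent to $\int_{B_1(0)} v\,dx = 0$, and the substitution $x = R\,y$ turns the left-hand side of \eqref{fantasy} into the corresponding integral for $v$ with $R = 1$. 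Hence it suffices to prove \eqref{fantasy} when $R = 1$, which I assume from now on.

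\emph{Step 2 (Logarithmic growth of the dyadic averages).} For $k \in \mathbb{Z}$ let $a_k \vcentcolon= u_{0,2^k}$, so that $a_0 = 0$ by hypothesis. Enlarging the ball and applying Jensen's inequality together with the Campanato bound $\int_{B_{2^{k+1}}(0)} |u - a_{k+1}|^p\,dx \le 2^{(k+1)n}\,[u]^p_{\mathcal{L}^{p,n}(\mathbb{R}^n)}$, one obtains
\[
|a_{k+1} - a_k| = \left|\frac{1}{|B_{2^k}(0)|}\int_{B_{2^k}(0)} (u - a_{k+1})\,dx\right| \le 2^n\left(\frac{1}{|B_{2^{k+1}}(0)|}\int_{B_{2^{k+1}}(0)} |u - a_{k+1}|^p\,dx\right)^{1/p}\le C_1\,[u]_{\mathcal{L}^{p,n}(\mathbb{R}^n)},
\]
with $C_1 = C_1(n,p) > 0$. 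Telescoping from $a_0 = 0$ then yields the key estimate $|a_k| \le C_1\,|k|\,[u]_{\mathcal{L}^{p,n}(\mathbb{R}^n)}$ for every $k \in \mathbb{Z}$.

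\emph{Step 3 (Dyadic decomposition).} I split $\mathbb{R}^n$ into $B_1(0)$ and the annuli $A_k \vcentcolon= B_{2^k}(0)\setminus B_{2^{k-1}}(0)$, $k \ge 1$. On $B_1(0)$ the weight is $\ge 1$ and $u_{0,1} = 0$, so $\int_{B_1(0)} \frac{|u|^p}{1 + |x|^n|\log|x||^{p+2}}\,dx \le \int_{B_1(0)} |u|^p\,dx \le [u]^p_{\mathcal{L}^{p,n}(\mathbb{R}^n)}$. On $A_k$ with $k \ge 2$ one has $|x|^n|\log|x||^{p+2} \ge c_n\,2^{kn}(k-1)^{p+2}$ with $c_n = c_n(n,p) > 0$, while writing $u = (u - a_k) + a_k$ and combining the Campanato bound with Step 2 gives $\int_{B_{2^k}(0)}|u|^p\,dx \le C_2\,2^{kn}\,k^p\,[u]^p_{\mathcal{L}^{p,n}(\mathbb{R}^n)}$ for $k \ge 1$; therefore
\[
\int_{A_k} \frac{|u|^p}{1 + |x|^n|\log|x||^{p+2}}\,dx \le \frac{C_2}{c_n}\,\frac{k^p}{(k-1)^{p+2}}\,[u]^p_{\mathcal{L}^{p,n}(\mathbb{R}^n)} \le \frac{C_3}{(k-1)^2}\,[u]^p_{\mathcal{L}^{p,n}(\mathbb{R}^n)}.
\]
The annulus $k = 1$ contributes a single term bounded by $\int_{B_2(0)} |u|^p\,dx \le C\,[u]^p_{\mathcal{L}^{p,n}(\mathbb{R}^n)}$, and summing over $k \ge 2$ (the series $\sum_k (k-1)^{-2}$ converges) yields \eqref{fantasy}.

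\emph{Main obstacle.} The only genuinely delicate point is the logarithmic growth of the averages in Step 2 --- nothing but the classical fact that functions with bounded mean oscillation grow at most logarithmically --- together with the observation that the weight in \eqref{fantasy} is designed to absorb exactly this growth: the exponent $p + 2$ makes $k^p/(k-1)^{p+2}$ summable, while any exponent strictly larger than $p + 1$ would suffice. Everything else is a routine combination of Jensen's inequality, the definition of the Campanato seminorm, and dyadic summation.
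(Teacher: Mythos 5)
Your argument is correct and follows essentially the same route as the paper: reduce to $R=1$ by the scale invariance of $[\,\cdot\,]_{\mathcal{L}^{p,n}}$, estimate $|u_{0,2^{k+1}}-u_{0,2^k}|\le C\,[u]_{\mathcal{L}^{p,n}}$, telescope to get the linear growth of dyadic averages, and then sum over the annuli $B_{2^k}\setminus B_{2^{k-1}}$ using that the weight dominates $c\,2^{kn}(k-1)^{p+2}$ there. The only cosmetic differences are that the paper keeps the exponent $p$ in an intermediate inequality and then explicitly divides by $j^2$ to manufacture the exponent $p+2$, whereas you build the full $(k-1)^{p+2}$ into the weight bound from the start (and your extension of the average estimate to all $k\in\mathbb{Z}$ is unused, since only $k\ge 1$ ever enters).
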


\begin{proof}
The proof is an adaptation of that of \cite[Equation (1.2)]{FeSt}. The final outcome is slightly better. We observe that it is sufficient to prove \eqref{fantasy} for $R=1$. The general case then follows by a standard scaling argument.

\noindent We first fix some shortcut notation, for the sake of simplicity. For every $k\in\mathbb{N}$, we set
	\[
	u_k=\frac{1}{|B_{2^k}(0)|}\,\int_{B_{2^k}(0)} u\,dx,
	\]
	and observe that $u_0=0$, by assumption.
	We start by estimating the difference $|u_{k+1}-u_{k}|$. We have
	\[
	\begin{split}
	\left|\int_{B_{2^{k}}(0)} [u(x)-u_{k+1}] \,dx\right|&\le \int_{B_{2^{k}}(0)} |u(x)-u_{k+1}| \,dx\\
	&\le |B_{2^{k}}(0)|^\frac{p-1}{p}\,\left(\int_{B_{2^{k}}(0)} |u(x)-u_{k+1}|^p \,dx\right)^\frac{1}{p}\\
	&\le |B_{2^{k}}(0)|^\frac{p-1}{p}\,\left(\int_{B_{2^{k+1}}(0)} |u(x)-u_{k+1}|^p \,dx\right)^\frac{1}{p}\\
	&\le C\,|B_{2^{k}}(0)|^\frac{p-1}{p}\,|B_{2^{k+1}}(0)|^\frac{1}{p}\,[u]_{\mathcal{L}^{p,n}(\mathbb{R}^n)}.
	\end{split}
	\]
	By observing that
	\[
	\left(\frac{|B_{2^{k+1}}(0)|}{|B_{2^k}(0)|}\right)^\frac{1}{p}=2^\frac{N}{p},
	\]
	we can divide both sides by $|B_{2^{k}}(0)|$ and get
	\[
	|u_k-u_{k+1}|\le C\,[u]_{\mathcal{L}^{p,n}(\mathbb{R}^n)}.
	\]
	By using this estimate and the triangle inequality, we then get for every $j\in\mathbb{N}\setminus\{0\}$
	\[
	|u_j|=|u_j-u_0|\le \sum_{k=0}^{j-1} |u_{k+1}-u_k|\le C\,j\,[u]_{\mathcal{L}^{p,n}(\mathbb{R}^n)}.
	\]
	We now get
	\[
	\begin{split}
	\int_{B_{2^j}(0)} |u|^p\,dx=\int_{B_{2^j}(0)} |u-u_0|^p\,dx&\le 2^{p-1}\,\int_{B_{2^j}(0)} |u-u_j|^p\,dx+2^{p-1}\,|u_j|^p\,|B_{2^j}(0)|\\
	&\le 2^{p-1}\,\int_{B_{2^j}(0)} |u-u_j|^p\,dx+2^{p-1}\,C^p\,j^p\,|B_{2^j}(0)|\,[u]_{\mathcal{L}^{p,n}(\mathbb{R}^n)}^p\\
	&\le C\,|B_{2^j}(0)|\,(1+j^p)\,[u]_{\mathcal{L}^{p,n}(\mathbb{R}^n)}^p.
	\end{split}
	\]
	In particular, this implies for every $j\in\mathbb{N}\setminus\{0\}$
	\[
	\frac{1}{|B_{2^j}(0)|\,(1+j^p)}\,\int_{B_{2^j}(0)\setminus B_{2^{j-1}}(0)} |u|^p\,dx\le C\,[u]_{\mathcal{L}^{p,n}(\mathbb{R}^n)}^p.
	\]
	We now observe that for every $x\in B_{2^j}(0)\setminus B_{2^{j-1}}(0)$
	\[
	|B_{2^j}(0)|\,(1+j^p)=\omega_n\,2^{j\,n}\,(1+j^p)\le \omega_n\,\left(2\,|x|\right)^n\,\Big(1+\left(1+\log_2 |x|\right)^p\Big).
	\]
	If we use this estimate in the previous inequality, we get
	\[
	\int_{B_{2^j}(0)\setminus B_{2^{j-1}}(0)} \frac{|u|^p}{1+|x|^n\,(\log |x|)^p}\,dx\le C\,[u]_{\mathcal{L}^{p,n}(\mathbb{R}^n)}^p.
	\]	
We further divide both sides by $j^2$, so to get
\[
\frac{1}{j^2}\,\int_{B_{2^j}(0)\setminus B_{2^{j-1}}(0)} \frac{|u|^p}{1+|x|^n\,(\log |x|)^{p}}\,dx\le \frac{C}{j^2}\,[u]_{\mathcal{L}^{p,n}(\mathbb{R}^n)}^p.
\]
On the left-hand side, we use that for every $x\in B_{2^j}(0)\setminus B_{2^{j-1}}(0)$ it holds
\[
j^2\le \left(1+\log_2 |x|\right)^2\le C\,\Big(1+(\log |x|)^2\Big).
\]
This in turn implies that
\[
j^2\,\Big(1+|x|^n\,(\log |x|)^p\Big)\le C\,\Big(1+|x|^n\,(\log |x|)^{p+2}\Big),
\]
possibly for a different constant $C>0$.
If we now sum over $j\ge 1$
we get
\begin{equation}
\label{1}
\int_{\mathbb{R}^n\setminus B_1(0)} \frac{|u|^p}{1+|x|^n\,(\log |x|)^{p+2}}\,dx\le C\,[u]_{\mathcal{L}^{p,n}(\mathbb{R}^n)}^p.
\end{equation}	
We are only left with observing that we have (recall that $u$ has average $0$ in $B_1(0)$)
\[
\int_{B_1(0)} |u|^p\,dx\le C\,|B_1(0)|\,[u]_{\mathcal{L}^{p,n}(\mathbb{R}^n)}^p,
\]
and
\[
1+|x|^n\,\Big|\log |x|\Big|^{p+2}\ge \frac{1}{C},\qquad \mbox{ for } x\in B_1(0).
\]
Thus we get
\begin{equation}
\label{2}
\int_{B_1(0)} \frac{|u|^p}{1+|x|^n\,\Big|\log |x|\Big|^{p+2}}\,dx\le C\,[u]_{\mathcal{L}^{p,n}(\mathbb{R}^n)}^p,
\end{equation}
as well. By summing up \eqref{1} and \eqref{2}, we get \eqref{fantasy} for $R=1$, as desired.
\end{proof}

\begin{remark}Due to equation \eqref{BMOL}, the previous weighted estimate applies to $BMO$, as well.\end{remark}

\subsection{A Morrey-Campanato--type inequality and applications}

We now prove an inequality relating the Gagliardo-Slobodecki\u{\i} and Campanato seminorms. This will give us, as corollaries, fractional versions of the Poincaré--Wirtinger and Morrey inequalities.
\begin{theorem}
\label{teo:campanatoholder}
Let $s\in(0,1)$ and $1\le p<+\infty$.
Then for every $u\in C^\infty_c(\mathbb{R}^n)$ we have
\[
[u]_{\mathcal{L}^{p,sp}(\mathbb{R}^n)}\le C\,[u]_{W^{s,p}(\mathbb{R}^n)},
\]
for a constant $C=C(n,p)>0$.
\end{theorem}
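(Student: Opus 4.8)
The plan is to estimate, for a fixed ball $B_\varrho(x_0)$, the quantity $\varrho^{-sp}\int_{B_\varrho(x_0)}|u - u_{x_0,\varrho}|^p\,dx$ directly by the localized Gagliardo double integral over $B_\varrho(x_0)\times B_\varrho(x_0)$, which is itself bounded by $s(1-s)^{-1}$ times (a constant times) $[u]_{W^{s,p}(\mathbb{R}^n)}^p$. The starting point is the Poincar\'e-type identity $u(x) - u_{x_0,\varrho} = \frac{1}{|B_\varrho(x_0)|}\int_{B_\varrho(x_0)} (u(x)-u(y))\,dy$. Raising to the $p$-th power and applying Jensen's inequality in the $y$-average, then integrating in $x$ over $B_\varrho(x_0)$, gives
\[
\int_{B_\varrho(x_0)}|u(x)-u_{x_0,\varrho}|^p\,dx \le \frac{1}{|B_\varrho(x_0)|}\iint_{B_\varrho(x_0)\times B_\varrho(x_0)} |u(x)-u(y)|^p\,dx\,dy.
\]

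The next step is to reinsert the Gagliardo weight. For $x,y\in B_\varrho(x_0)$ we have $|x-y|\le 2\varrho$, hence $|x-y|^{-(n+sp)}\ge (2\varrho)^{-(n+sp)}$, equivalently $1 \le (2\varrho)^{n+sp}\,|x-y|^{-(n+sp)}$. Multiplying the integrand by this, and using $|B_\varrho(x_0)| = \omega_n\varrho^n$, yields
\[
\int_{B_\varrho(x_0)}|u(x)-u_{x_0,\varrho}|^p\,dx \le \frac{(2\varrho)^{n+sp}}{\omega_n\,\varrho^n}\iint_{B_\varrho(x_0)\times B_\varrho(x_0)}\frac{|u(x)-u(y)|^p}{|x-y|^{n+sp}}\,dx\,dy = \frac{2^{n+sp}}{\omega_n}\,\varrho^{sp}\iint_{B_\varrho(x_0)\times B_\varrho(x_0)}\frac{|u(x)-u(y)|^p}{|x-y|^{n+sp}}\,dx\,dy.
\]
Dividing by $\varrho^{sp}$ and bounding the local double integral by the global one over $\mathbb{R}^n\times\mathbb{R}^n$, we get
\[
\varrho^{-sp}\int_{B_\varrho(x_0)}|u(x)-u_{x_0,\varrho}|^p\,dx \le \frac{2^{n+sp}}{\omega_n}\iint_{\mathbb{R}^n\times\mathbb{R}^n}\frac{|u(x)-u(y)|^p}{|x-y|^{n+sp}}\,dx\,dy = \frac{2^{n+sp}}{\omega_n\,s\,(1-s)}\,[u]_{W^{s,p}(\mathbb{R}^n)}^p.
\]
Since $2^{sp}\le 2^p$ and $s(1-s)\le 1/4$, taking the supremum over all $x_0\in\mathbb{R}^n$, $\varrho>0$ and then the $p$-th root gives $[u]_{\mathcal{L}^{p,sp}(\mathbb{R}^n)}\le C(n,p)\,[u]_{W^{s,p}(\mathbb{R}^n)}$.

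There is essentially no serious obstacle here: the argument is the classical Poincar\'e/Jensen trick combined with the trivial lower bound on the kernel on a ball, and the only points requiring a little care are the bookkeeping of the constant (making sure the $s,1-s$ factors do not degenerate, which they do not since $s(1-s)$ is bounded above) and the fact that the estimate is uniform in $x_0$ and $\varrho$, which is immediate because the bound we obtained does not depend on either. The restriction $u\in C^\infty_c(\mathbb{R}^n)$ is used only to ensure all quantities are finite and the averages are well defined; by density the inequality extends to $\dot W^{s,p}$ whenever needed. If one wished to state the theorem with the unnormalized seminorm the factor $s(1-s)$ would simply be dropped, which is why the constant can be taken to depend on $n$ and $p$ only.
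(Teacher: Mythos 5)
There is a genuine gap in the constant bookkeeping, and it is exactly the point where the paper's proof has to work harder than yours. Your chain of inequalities correctly yields
\[
\varrho^{-s\,p}\int_{B_\varrho(x_0)}|u-u_{x_0,\varrho}|^p\,dx \;\le\; \frac{2^{n+s\,p}}{\omega_n\,s\,(1-s)}\,[u]_{W^{s,p}(\mathbb{R}^n)}^p,
\]
since the normalised seminorm $[u]_{W^{s,p}}^p$ \emph{contains} the factor $s(1-s)$, so you must divide by it when passing from the raw double integral to the seminorm. The factor $\tfrac{1}{s(1-s)}$ therefore sits in the \emph{denominator} of your bound, and the estimate $s(1-s)\le 1/4$ gives $\tfrac{1}{s(1-s)}\ge 4$, which is useless: your constant actually \emph{blows up} as $s\searrow 0$ or $s\nearrow 1$. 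What you have proved is
\[
[u]_{\mathcal{L}^{p,sp}(\mathbb{R}^n)}\le \frac{C(n,p)}{\big(s\,(1-s)\big)^{1/p}}\,[u]_{W^{s,p}(\mathbb{R}^n)},
\]
which is strictly weaker than the theorem's claim of a constant $C=C(n,p)$ independent of $s$. The final remark about the unnormalised seminorm does not rescue the argument; it merely confirms that your method loses a full power of $s(1-s)$.

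Obtaining the $s$-uniform constant is precisely why the paper takes the interpolation route rather than the direct kernel-insertion trick. The paper's proof introduces the $K$-functional $K(t,u)=\inf_v\big(\|u-v\|_{L^p}+t\,[v]_{W^{1,p}}\big)$, uses the classical (local) Poincar\'e--Wirtinger inequality on a ball for the smooth competitor $v$, and then invokes \cite[Proposition 4.5]{Brasco2019}, which gives $\int_0^\infty\big(K(\varrho,u)/\varrho^s\big)^p\,\tfrac{d\varrho}{\varrho}\le \tfrac{C}{s(1-s)}[u]_{W^{s,p}}^p$. The crucial further step is that integrating $\varrho^{-sp}\int_{B_\varrho}|u-u_{x_0,\varrho}|^p$ against $\tfrac{d\varrho}{\varrho}$ over $(r,\infty)$ produces a factor $\tfrac{1}{s\,p}$, and integrating over $(r,2r)$ produces a factor comparable to $\tfrac{1}{(1-s)}$; combining these two lower bounds (multiply one by $1-s$, the other by $s$, and sum) cancels the $\tfrac{1}{s(1-s)}$ singularity and yields a constant depending only on $n$ and $p$. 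To repair your proof you would need an analogous mechanism; as written, the single ball-kernel comparison cannot see the $s(1-s)$ normalisation and the claimed uniformity in $s$ is not established.
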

\begin{proof}
We fix $x_0\in\mathbb{R}^n$ and $\varrho>0$, then for every $u,v\in C^\infty_c(\mathbb{R}^n)$ by Minkowski inequality we get
\[
\begin{split}
\left(\varrho^{-s\,p}\,\int_{B_\varrho(x_0)} |u-u_{x_0,\varrho}|^p\,dx\right)^\frac{1}{p}&\le \left(\varrho^{-s\,p}\,\int_{B_\varrho(x_0)} |u-v|^p\,dx\right)^\frac{1}{p}+\left(\varrho^{-s\,p}\,\int_{B_\varrho(x_0)} |v-v_{x_0,\varrho}|^p\,dx\right)^\frac{1}{p}\\
&+\left(\varrho^{-s\,p}\,\int_{B_\varrho(x_0)} |v_{x_0,\varrho}-u_{x_0,\varrho}|^p\,dx\right)^\frac{1}{p}\\
&\le2\,\left(\varrho^{-s\,p}\,\int_{\mathbb{R}^n} |u-v|^p\,dx\right)^\frac{1}{p}+\left(\varrho^{-s\,p}\,\int_{B_\varrho(x_0)} |v-v_{x_0,\varrho}|^p\,dx\right)^\frac{1}{p}.
\end{split}
\]
In the second estimate, we used Jensen's inequality.
We now apply the standard Poincar\'e-Wirtinger inequality (see for example \cite[Theorem 3.17]{Gi}) in order to control the last term
\[
\int_{B_\varrho(x_0)} |v-v_{x_0,\varrho}|^p\,dx\le C\,\varrho^p\,\int_{B_\varrho(x_0)} |\nabla v|^p\,dx\le C\,\varrho^p\,[v]^p_{W^{1,p}(\mathbb{R}^n)},
\]
for a constant $C=C(n,p)>0$.
Then the last two displays imply the estimate
\[
\left(\varrho^{-s\,p}\,\int_{B_\varrho(x_0)} |u-u_{x_0,\varrho}|^p\,dx\right)^\frac{1}{p}\le C\,\frac{\|u-v\|_{L^p(\mathbb{R}^n)}+\varrho\,[v]_{W^{1,p}(\mathbb{R}^n)}}{\varrho^s}.
\]
This estimate is valid for every $v\in C^\infty_c(\mathbb{R}^n)$. Thus, if we define the $K-$functional
\[
K(t,u):=\inf_{v\in C^\infty_c(\mathbb{R}^n)}\Big(\|u-v\|_{L^p(\mathbb{R}^n)}+t\,[v]_{W^{1,p}(\mathbb{R}^n)}\Big),
\]
by taking the infimum over $v$ in the last estimate, we get
\[
\left(\varrho^{-s\,p}\,\int_{B_\varrho(x_0)} |u-u_{x_0,\varrho}|^p\,dx\right)^\frac{1}{p}\le C\,\frac{K(\varrho,u)}{\varrho^s}.
\]
By raising to power $p$ and integrating over $(0,+\infty)$ with respect to the singular measure $d\varrho/\varrho$, we get
\[
\int_0^{+\infty}\varrho^{-s\,p}\,\left(\int_{B_\varrho(x_0)} |u-u_{x_0,\varrho}|^p\,dx\right)\,\frac{d\varrho}{\varrho}\le C\,\int_{0}^{+\infty}\left(\frac{K(\varrho,u)}{\varrho^s}\right)^p\,\frac{d\varrho}{\varrho}.
\]
From \cite[Proposition 4.5]{Brasco2019}, we have that
\[
\int_{0}^{+\infty}\left(\frac{K(\varrho,u)}{\varrho^s}\right)^p\,\frac{d\varrho}{\varrho}\le \frac{C}{s\,(1-s)}\,[u]_{W^{s,p}(\mathbb{R}^n)}^p,
\]
for a constant $C=C(n,p)>0$. Up to now, we obtained
\begin{equation}
\label{proto}
\int_0^{+\infty}\varrho^{-s\,p}\,\left(\int_{B_\varrho(x_0)} |u-u_{x_0,\varrho}|^p\,dx\right)\,\frac{d\varrho}{\varrho}\le \frac{C}{s\,(1-s)}\,[u]_{W^{s,p}(\mathbb{R}^n)}^p.
\end{equation}
We now use that
\[
\int_{B_\varrho(x_0)} |u-u_{x_0,\varrho}|^p\,dx\ge \inf_{c\in\mathbb{R}} \int_{B_\varrho(x_0)} |u-c|^p\,dx,
\]
thus we get for $r>0$
\[
\begin{split}
\int_0^{+\infty}\varrho^{-s\,p}\,\left(\int_{B_\varrho(x_0)} |u-u_{x_0,\varrho}|^p\,dx\right)\,\frac{d\varrho}{\varrho}&\ge \int_r^{+\infty} \varrho^{-s\,p}\,\left(\inf_{c\in\mathbb{R}} \int_{B_\varrho(x_0)} |u-c|^p\,dx\right)\,\frac{d\varrho}{\varrho}\\
&\ge \left(\inf_{c\in\mathbb{R}} \int_{B_r(x_0)} |u-c|^p\,dx\right)\, \int_r^{+\infty} \varrho^{-s\,p}\,\,\frac{d\varrho}{\varrho}\\
&=\frac{1}{s\,p}\,r^{-s\,p}\,\left(\inf_{c\in\mathbb{R}} \int_{B_r(x_0)} |u-c|^p\,dx\right).
\end{split}
\]
Since $r>0$ and $x_0\in \mathbb{R}^n$ are arbitrary, from \eqref{proto} we thus obtain that
\[
\sup_{x_0\in\mathbb{R}^n,\, r>0}\left(r^{-s\,p}\,\inf_{c\in\mathbb{R}} \int_{B_r(x_0)} |u-c|^p\,dx\right)\le \frac{C}{1-s}\,[u]_{W^{s,p}(\mathbb{R}^n)}^p.
\]
By recalling that the quantity in the left-hand side is equivalent to the Campanato seminorm $\mathcal{L}^{p,sp}$  (see \cite[Remark 2.2]{Gi}), we get
\begin{equation}
\label{s}
[u]^p_{\mathcal{L}^{p,sp}(\mathbb{R}^n)}\le \frac{C}{1-s}\,[u]_{W^{s,p}(\mathbb{R}^n)}^p,
\end{equation}
for some $C=C(n,p)>0$.
In a similar way, we observe that for $r>0$
\[
\begin{split}
\int_0^{+\infty}\varrho^{-s\,p}\,\left(\int_{B_\varrho(x_0)} |u-u_{x_0,\varrho}|^p\,dx\right)\,\frac{d\varrho}{\varrho}&\ge \int_r^{2\,r} \varrho^{-s\,p}\,\left(\inf_{c\in\mathbb{R}} \int_{B_\varrho(x_0)} |u-c|^p\,dx\right)\,\frac{d\varrho}{\varrho}\\
&\ge \left(\inf_{c\in\mathbb{R}} \int_{B_r(x_0)} |u-c|^p\,dx\right)\, \int_r^{2\,r} \varrho^{-s\,p}\,\frac{d\varrho}{\varrho}\\
&=\left(\inf_{c\in\mathbb{R}} \int_{B_r(x_0)} |u-c|^p\,dx\right)\, \int_r^{2\,r} \frac{\varrho^{p-s\,p}}{\varrho^p}\,\frac{d\varrho}{\varrho}\\
&\ge \frac{1}{(2\,r)^p}\,\left(\inf_{c\in\mathbb{R}} \int_{B_r(x_0)} |u-c|^p\,dx\right)\, \int_r^{2\,r} \varrho^{p-s\,p}\,\frac{d\varrho}{\varrho} \\
&=\frac{1}{2^p\,(1-s)\,p}\,\frac{(2\,r)^{p-s\,p}-r^{p-s\,p}}{r^p}\,\left(\inf_{c\in\mathbb{R}} \int_{B_r(x_0)} |u-c|^p\,dx\right).
\end{split}
\]
As before, from \eqref{proto} we get
\[
\sup_{x_0\in\mathbb{R}^n,\, r>0}\left(r^{-s\,p}\,\inf_{c\in\mathbb{R}} \int_{B_r(x_0)} |u-c|^p\,dx\right)\le \frac{C}{s}\,[u]_{W^{s,p}(\mathbb{R}^n)}^p,
\]
and thus
\begin{equation}
\label{1-s}
[u]^p_{\mathcal{L}^{p,sp}(\mathbb{R}^n)}\le \frac{C}{s}\,[u]_{W^{s,p}(\mathbb{R}^n)}^p,
\end{equation}
for some $C=C(n,p)>0$. If we now multiply \eqref{s} by $(1-s)$, \eqref{1-s} by $s$ and then take the sum, we get the claimed estimate.
\end{proof}
As a first straightforward consequence of Theorem \ref{teo:campanatoholder}, we get the following
\begin{corollary}[Fractional Poincar\'e-Wirtinger inequality]
\label{coro:poincare}
Let $s\in(0,1)$ and $1\le p<+\infty$. Then for every $u\in C^\infty_c(\mathbb{R}^n)$ and every $x_0\in \mathbb{R}^n$, $R>0$, we have
\begin{equation}
\label{PW}
\int_{B_R(x_0)} |u-u_{x_0,R}|^p\,dx\le C\,R^{s\,p}\,[u]_{W^{s,p}(\mathbb{R}^n)}^p,
\end{equation}
for a constant $C=C(n,p)>0$.
\end{corollary}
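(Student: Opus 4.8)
The plan is to read off \eqref{PW} directly from Theorem \ref{teo:campanatoholder}, after unwinding the definition of the Campanato seminorm in the case $\lambda = s\,p$. Concretely, fix $x_0\in\mathbb{R}^n$ and $R>0$. Since $R^{-s\,p}\int_{B_R(x_0)}|u-u_{x_0,R}|^p\,dx$ is one of the quantities over which the supremum defining $[u]_{\mathcal{L}^{p,sp}(\mathbb{R}^n)}^p$ is taken, we have
\[
R^{-s\,p}\int_{B_R(x_0)}|u-u_{x_0,R}|^p\,dx\le [u]_{\mathcal{L}^{p,sp}(\mathbb{R}^n)}^p .
\]
Multiplying both sides by $R^{s\,p}$ and invoking the bound $[u]_{\mathcal{L}^{p,sp}(\mathbb{R}^n)}\le C\,[u]_{W^{s,p}(\mathbb{R}^n)}$ from Theorem \ref{teo:campanatoholder} yields exactly \eqref{PW}, with a constant $C=C(n,p)>0$.

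I do not expect any real obstacle here: the whole analytic content sits in Theorem \ref{teo:campanatoholder}, and this corollary merely rephrases the case $\lambda = s\,p$ in a dilation-unnormalised form. For a reader who prefers to bypass Campanato spaces, the estimate can also be extracted directly from the intermediate inequality \eqref{proto} in the proof of Theorem \ref{teo:campanatoholder}: restricting the $\varrho$-integral to $(R,+\infty)$, bounding $\int_{B_\varrho(x_0)}|u-u_{x_0,\varrho}|^p\,dx\ge \inf_{c\in\mathbb{R}}\int_{B_R(x_0)}|u-c|^p\,dx$ for $\varrho\ge R$, and using $\int_R^{+\infty}\varrho^{-s\,p}\,d\varrho/\varrho=(s\,p)^{-1}R^{-s\,p}$, one controls $R^{-s\,p}\inf_{c\in\mathbb{R}}\int_{B_R(x_0)}|u-c|^p\,dx$ by a multiple of $[u]_{W^{s,p}(\mathbb{R}^n)}^p$; one then passes from the best constant $c$ to the mean $u_{x_0,R}$ via $\int_{B_R(x_0)}|u-u_{x_0,R}|^p\,dx\le 2^p\inf_{c\in\mathbb{R}}\int_{B_R(x_0)}|u-c|^p\,dx$, which follows from the triangle inequality and Jensen's inequality. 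Either route gives the claim.
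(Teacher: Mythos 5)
Your proposal is correct and matches the paper's intent: the paper states Corollary \ref{coro:poincare} as ``a first straightforward consequence of Theorem \ref{teo:campanatoholder}'' without further argument, and your first paragraph is precisely that one-line deduction, unwinding the Campanato seminorm at a single ball and applying the Morrey--Campanato bound. The alternative route you sketch from \eqref{proto} is also sound (and is, in effect, what already appears inside the proof of Theorem \ref{teo:campanatoholder}), but it is not needed for this corollary.
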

\begin{remark}
\label{rem:pessetto}
As a simple consequence of the previous result, we also have the following more flexible inequality: for every $u\in C^\infty_c(\mathbb{R}^n)$ and every $x_0\in \mathbb{R}^n$, $0<r\le R$, we have
\begin{equation}
\label{PWbis}
\int_{B_R(x_0)} |u-u_{x_0,r}|^p\,dx\le C\,\left(1+\left(\frac{R}{r}\right)^n\right)\,R^{s\,p}\,[u]_{W^{s,p}(\mathbb{R}^n)}^p,
\end{equation}
for a possibly different constant $C=C(n,p)>0$.
Indeed, it is sufficient to observe that, due to Jensen's inequality,
\[
\begin{split}
\int_{B_R(x_0)} |u-u_{x_0,r}|^p\,dx&\le 2^{p-1}\, \int_{B_R(x_0)} |u-u_{x_0,R}|^p\,dx+2^{p-1}\,|B_R(x_0)|\,|u_{x_0,r}-u_{x_0,R}|^p\\
&\le 2^{p-1}\, \int_{B_R(x_0)} |u-u_{x_0,R}|^p\,dx+2^{p-1}\,\frac{|B_R(x_0)|}{|B_r(x_0)|}\,\int_{B_r(x_0)}|u-u_{x_0,R}|^p\,dx\\
&\le 2^{p-1}\,\left(1+\left(\frac{R}{r}\right)^n\right)\,\int_{B_R(x_0)}|u-u_{x_0,R}|^p\,dx.
\end{split}
\]
An application of Corollary \ref{coro:poincare} now leads to the claimed estimate \eqref{PWbis}.
\end{remark}
Theorem \ref{teo:campanatoholder}, together with the estimate \eqref{campanatoholder}, also implies the following result. We include in the statement the case $s=1$, which is classical, see for example \cite[Theorem 3.9]{Gi}.
\begin{corollary}[Fractional Morrey's inequality]
\label{coro:morrey}
Let $s\in(0,1]$ and $1\le p<+\infty$ be such that $s\,p> n$. Then for every $u\in C^\infty_c(\mathbb{R}^n)$ we have
\begin{equation}
\label{eq:Morrey}
[u]_{C^{0,s-\frac{n}{p}}(\mathbb{R}^n)}\le C\,[u]_{W^{s,p}(\mathbb{R}^n)},
\end{equation}
for a constant $C=C(n,p,s)>0$. Such a constant may be taken independent of $s$, whenever $s-n/p\ge \delta_0$, for some $\delta_0>0$. In this case, it has the form $C=C(n,p,\delta_0)>0$.
\end{corollary}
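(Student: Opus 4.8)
The plan is to derive the fractional Morrey inequality \eqref{eq:Morrey} as an immediate consequence of the Morrey--Campanato inequality from Theorem \ref{teo:campanatoholder} combined with the classical Campanato embedding \eqref{campanatoholder}. First I would treat the genuinely fractional range $s \in (0,1)$. For such $s$ with $s\,p > n$, set $\lambda = s\,p$; since $n < \lambda$ and, because $p \ge 1$, also $\lambda = s\,p \le p < n + p$ (using $s<1$), the parameter $\lambda$ lies in the admissible interval $(n, n+p]$ for \eqref{campanatoholder}, and the associated H\"older exponent is $\alpha = (\lambda - n)/p = s - n/p \in (0,1)$. Applying \eqref{campanatoholder} we get $[u]_{C^{0,s-n/p}(\mathbb{R}^n)} \le C\,[u]_{\mathcal{L}^{p,sp}(\mathbb{R}^n)}$ with $C = C(n,p,s)$, and then Theorem \ref{teo:campanatoholder} bounds the right-hand side by $C\,[u]_{W^{s,p}(\mathbb{R}^n)}$. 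Chaining the two inequalities yields \eqref{eq:Morrey} for every $u \in C^\infty_c(\mathbb{R}^n)$.

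Next I would handle the endpoint $s = 1$, where Theorem \ref{teo:campanatoholder} does not directly apply. Here the statement $[u]_{C^{0,1-n/p}(\mathbb{R}^n)} \le C\,\|\nabla u\|_{L^p(\mathbb{R}^n)}$ for $p > n$ is exactly the classical Morrey inequality, and I would simply cite \cite[Theorem 3.9]{Gi} (as the corollary statement already does). Alternatively, one can note that the proof of Theorem \ref{teo:campanatoholder} already produced, for $s=1$, the Poincar\'e--Wirtinger bound $\int_{B_\varrho(x_0)}|v - v_{x_0,\varrho}|^p\,dx \le C\,\varrho^p\int_{B_\varrho(x_0)}|\nabla v|^p\,dx$, which says precisely $[u]_{\mathcal{L}^{p,p}(\mathbb{R}^n)} \le C\,[u]_{W^{1,p}(\mathbb{R}^n)}$; feeding this into \eqref{campanatoholder} with $\lambda = p$, $\alpha = 1 - n/p$, gives the claim in the same way as the fractional case. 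Either route closes the endpoint.

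Finally I would address the claim about uniformity of the constant when $s - n/p \ge \delta_0 > 0$. The only place $s$-dependence enters is through the constant in \eqref{campanatoholder}, which according to the remark following \eqref{campanatoholder} blows up only as $\lambda \searrow n$, i.e. as $\alpha = s - n/p \searrow 0$; away from that, for $\alpha \ge \delta_0$, it can be taken as $C = C(n,p,\delta_0)$. The constant coming from Theorem \ref{teo:campanatoholder} is already of the form $C(n,p)$, uniform in $s$. Hence in the regime $s - n/p \ge \delta_0$ the composite constant has the form $C(n,p,\delta_0)$, as stated.

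I do not anticipate a serious obstacle here: the corollary is essentially a two-line composition of a previously established theorem with a classical, cited embedding. The only minor point requiring care is the bookkeeping on the range of $\lambda$ — checking that $\lambda = s\,p$ indeed satisfies $n < \lambda \le n+p$ so that \eqref{campanatoholder} is applicable — together with the separate, classical treatment of the $s = 1$ endpoint and the tracking of the constant's dependence on $s$ near $\alpha = 0$ versus away from it.
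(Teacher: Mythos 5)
Your proposal reproduces the paper's own (very brief) argument: the authors state just before the corollary that it follows from Theorem \ref{teo:campanatoholder} together with \eqref{campanatoholder}, with the case $s=1$ being classical and cited to \cite[Theorem 3.9]{Gi}. You have filled in exactly the intended details — the choice $\lambda = s\,p$, the verification that $\lambda\in(n,n+p]$, the observation that the constant in Theorem \ref{teo:campanatoholder} is $s$-independent while the one in \eqref{campanatoholder} degenerates only as $\lambda\searrow n$ — so the proof is correct and follows the same route.
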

\begin{remark}
The previous result is well-known, see for example \cite[Th\'eor\`eme 8.2]{Pe} for a proof using a different interpolation-type argument. The main focus here is on the presence of the scaling factor $s\,(1-s)$, which is incorporated in our definition of the Gagliardo-Slobodecki\u{\i} seminorm. If one is not interested in keeping track of this factor, actually the proof simplifies, see for example \cite[Lemma 2.3]{Fr}.
\end{remark}
We conclude this section with a variant of the Poincar\'e-Wirtinger inequality. We do not pay too much attention to the quality of the constant: the resulting outcome will be sufficient for our purposes.
\begin{lemma}
\label{lm:gracias}
Let $s\in(0,1)$ and $1\le p<+\infty$. Let $u\in L^1_{\rm loc}(\mathbb{R}^n)$ be such that
\[
[u]_{W^{s,p}(\mathbb{R}^n)}<+\infty.
\]
The for every $0<r<R$ and $x_0\in\mathbb{R}^n$, we have
\[
\int_{B_R(x_0)\setminus B_r(x_0)} |u-u_{x_0,R}|^p\,dx\le C\,R^{s\,p}\,\iint_{(B_R(x_0)\setminus B_r(x_0))\times B_R(x_0)} \frac{|u(x)-u(y)|^p}{|x-y|^{n+s\,p}}\,dx\,dy,
\]
for a constant $C=C(n,p,s)>0$.
\end{lemma}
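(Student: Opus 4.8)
The plan is to estimate the left-hand side pointwise in $x$: I write the deviation of $u(x)$ from its mean $u_{x_0,R}$ over the \emph{full} ball as an average of the differences $u(x)-u(y)$, apply Jensen's inequality, and then force in the Gagliardo kernel by means of the crude bound $|x-y|<2R$, valid for all $x,y\in B_R(x_0)$.

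Concretely, for every $x\in B_R(x_0)\setminus B_r(x_0)$ I would start from
\[
u(x)-u_{x_0,R}=\frac{1}{|B_R(x_0)|}\int_{B_R(x_0)}\big(u(x)-u(y)\big)\,dy,
\]
which makes sense since $u\in L^1_{\mathrm{loc}}(\mathbb{R}^n)$. Applying Jensen's inequality with the convex function $t\mapsto |t|^p$ and the probability measure $dy/|B_R(x_0)|$ gives
\[
|u(x)-u_{x_0,R}|^p\le\frac{1}{|B_R(x_0)|}\int_{B_R(x_0)}|u(x)-u(y)|^p\,dy.
\]
Since $x,y\in B_R(x_0)$ forces $|x-y|<2R$, we have $1\le (2R)^{n+s\,p}\,|x-y|^{-(n+s\,p)}$, and therefore
\[
|u(x)-u_{x_0,R}|^p\le\frac{(2R)^{n+s\,p}}{|B_R(x_0)|}\int_{B_R(x_0)}\frac{|u(x)-u(y)|^p}{|x-y|^{n+s\,p}}\,dy.
\]

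Integrating this in $x$ over the annulus $B_R(x_0)\setminus B_r(x_0)$ and using Tonelli's theorem (all integrands are nonnegative), together with $|B_R(x_0)|=\omega_n\,R^n$, I would obtain
\[
\int_{B_R(x_0)\setminus B_r(x_0)}|u-u_{x_0,R}|^p\,dx\le\frac{2^{n+s\,p}}{\omega_n}\,R^{s\,p}\iint_{(B_R(x_0)\setminus B_r(x_0))\times B_R(x_0)}\frac{|u(x)-u(y)|^p}{|x-y|^{n+s\,p}}\,dx\,dy,
\]
which is the claim with $C=2^{n+s\,p}/\omega_n$; since $s\in(0,1)$ one could even take $C=2^{n+p}/\omega_n$ independent of $s$, although the statement only requires $C=C(n,p,s)$.

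Regarding hypotheses and well-posedness: the assumption $[u]_{W^{s,p}(\mathbb{R}^n)}<+\infty$ guarantees that the double integral on the right-hand side is finite (being a portion of the full Gagliardo double integral), and the chain of inequalities above then also shows $u\in L^p(B_R(x_0)\setminus B_r(x_0))$, so both sides are meaningful; were the right-hand side infinite the inequality would be trivial. I do not expect any genuine obstacle in this argument — the only point deserving a moment's care is keeping the ``mixed'' domain of integration (with $x$ ranging over the annulus and $y$ over the whole ball $B_R(x_0)$) consistent throughout, which is precisely the form produced by the Jensen step.
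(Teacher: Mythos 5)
Your argument is correct and is essentially identical to the paper's: both use Jensen's inequality to replace $|u(x)-u_{x_0,R}|^p$ by the mean of $|u(x)-u(y)|^p$ over $B_R(x_0)$, and then insert the Gagliardo kernel via the crude bound $1\le (2R)^{n+sp}|x-y|^{-(n+sp)}$ valid on $B_R(x_0)\times B_R(x_0)$. The constant you obtain, $C=2^{n+sp}/\omega_n$, matches the paper's computation as well, and your remark that the double integral is finite under the standing hypothesis is a sound way to handle well-posedness.
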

\begin{proof}
The proof is quite straightforward, it is the same that can be found in \cite[page 297]{Mi2003}, for example. By using Jensen's inequality, we have
\[
\begin{split}
\int_{B_R(x_0)\setminus B_r(x_0)} |u-u_{x_0,R}|^p\,dx\le \frac{1}{|B_R(x_0)|}\, \iint_{(B_R(x_0)\setminus B_r(x_0))\times B_{R}(x_0)} |u(x)-u(y)|^p\,dx\,dy.
\end{split}
\]
We now observe that
\[
1\le \frac{(2\,R)^{n+s\,p}}{|x-y|^{n+s\,p}},\qquad \mbox{ for a.\,e. }(x,y)\in B_R(x_0)\times  B_R(x_0).
\]
By using this simple fact in the previous estimate, we get the desired conclusion.
\end{proof}

\section{Characterisation for $s\,p < n$}\label{sec.spmn}
\label{sec:3}

This case is similar to the local case (i.\,e. $s=1$) for $p<n$.
Indeed, in this range we obtain an embedding into a Lebesgue space, by means of the {\it fractional Sobolev inequality}
\begin{equation}
\label{eq:GNS sp < n}
\mathcal{S}_{s,p}\,\| u \|^p_{L^{p^\star_s}(\mathbb{R}^n)} \le [u]^p_{W^{s,p} (\mathbb{R}^n)}, \qquad \mbox{ for every } u\in C^\infty_c(\mathbb{R}^n),
\end{equation}
for some constant $\mathcal{S}_{s,p}>0$. Here $p^\star_s$ is the critical Sobolev exponent, defined by
\[
p^\star_s = \frac{n\,p}{n-s\,p}.
\]
We refer to \cite[Theorem 10.2.1]{maz} for an elementary proof of \eqref{eq:GNS sp < n}. See also \cite[Th\'eor\`eme 8.1]{Pe} for an older proof, based on real interpolation techniques.

By using inequality \eqref{eq:GNS sp < n}, it is possible to give a concrete characterization of the completion $\mathcal{D}^{s,p}(\mathbb{R}^n)$ as a functional space.
\begin{theorem}
	\label{thm:sp < n}
Let $s \in (0,1]$ and  $1\le p<+\infty$ be such that $s\,p < n$. We indicate by $\dot W^{s,p}(\mathbb{R}^n)$ the space
\begin{equation}
\label{seteq}
	\dot W^{s,p} (\mathbb{R}^n) = \Big\{  u \in L^{p^\star_s} ( \mathbb{R}^n )\, :\,  [ u ]_{W^{s,p} (\mathbb{R}^n)} < +\infty  \Big\}.
\end{equation}
We endow this space with the norm
\[
\|u\|_{\dot W^{s,p} (\mathbb{R}^n)} = [u]_{W^{s,p} (\mathbb{R}^n)},\qquad \mbox{ for every }u\in \dot W^{s,p}(\mathbb{R}^n).
\]
Then this is a Banach space, having $C^\infty_c(\mathbb{R}^n)$ as a dense subspace. Moreover, there exists a linear isometric isomorphism
\[	
\mathcal{J}: \mathcal{D}^{s,p} (\mathbb{R}^n) \to   \dot W^{s,p} (\mathbb{R}^n).
\]
In other words, the space $\mathcal{D}^{s,p}(\mathbb{R}^n)$ can be identified with $\dot W^{s,p}(\mathbb{R}^n)$.	
\end{theorem}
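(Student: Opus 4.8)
The plan is to construct the isometric isomorphism $\mathcal{J}$ directly from the definition of the completion, and then to verify it lands inside—and actually onto—the space $\dot W^{s,p}(\mathbb{R}^n)$. First I would recall that an element $U\in\mathcal{D}^{s,p}(\mathbb{R}^n)$ is an equivalence class of Cauchy sequences $(u_m)_{m\in\mathbb{N}}\subset C^\infty_c(\mathbb{R}^n)$ with respect to $[\,\cdot\,]_{W^{s,p}(\mathbb{R}^n)}$. The fractional Sobolev inequality \eqref{eq:GNS sp < n} shows that $(u_m)_{m\in\mathbb{N}}$ is also Cauchy in $L^{p^\star_s}(\mathbb{R}^n)$; by completeness of the Lebesgue space there is a limit $u\in L^{p^\star_s}(\mathbb{R}^n)$, and one checks that $u$ does not depend on the representative (if $(u_m)\sim_{s,p}(v_m)$ then $\|u_m-v_m\|_{L^{p^\star_s}}\to 0$ by \eqref{eq:GNS sp < n}, so the Lebesgue limits agree). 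This defines $\mathcal{J}(U)=u$, which is plainly linear.

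Next I would check that $\mathcal{J}(U)$ genuinely belongs to $\dot W^{s,p}(\mathbb{R}^n)$ and that $\mathcal{J}$ is an isometry. For the seminorm: passing to a subsequence we may assume $u_m\to u$ a.e., and then Fatou's lemma applied to the double integral defining $[\,\cdot\,]_{W^{s,p}}$ gives
\[
[u]_{W^{s,p}(\mathbb{R}^n)}\le \liminf_{m\to\infty}[u_m]_{W^{s,p}(\mathbb{R}^n)}=\|U\|_{\mathcal{D}^{s,p}(\mathbb{R}^n)}<+\infty,
\]
so $u\in\dot W^{s,p}(\mathbb{R}^n)$. For the reverse inequality, note $[u-u_m]_{W^{s,p}}\le\liminf_{k}[u_k-u_m]_{W^{s,p}}$ (again Fatou along an a.e.-convergent subsequence), and the right-hand side is small for $m$ large since $(u_k)$ is Cauchy; hence $[u_m]_{W^{s,p}}\to[u]_{W^{s,p}}$, giving $\|\mathcal{J}(U)\|_{\dot W^{s,p}}=\|U\|_{\mathcal{D}^{s,p}}$. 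Isometry forces injectivity.

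Surjectivity is where the real content lies: given any $u\in\dot W^{s,p}(\mathbb{R}^n)$, I must produce a sequence $(u_m)_{m\in\mathbb{N}}\subset C^\infty_c(\mathbb{R}^n)$ with $[u-u_m]_{W^{s,p}(\mathbb{R}^n)}\to 0$ (which then automatically forces $u_m\to u$ in $L^{p^\star_s}$ via \eqref{eq:GNS sp < n}, so $\mathcal{J}$ of its class is $u$). This is exactly the density of $C^\infty_c(\mathbb{R}^n)$ in $\dot W^{s,p}(\mathbb{R}^n)$, which the paper defers to the appendices: one mollifies, $u\ast\rho_m$, and then cuts off, $(u\ast\rho_m)\,\eta_m$, controlling the error in the Gagliardo seminorm by the convolution estimates of Appendix~\ref{sec:convolution} and the truncation estimates of Appendix~\ref{sec:6}; the integrability $u\in L^{p^\star_s}$ is what makes the cut-off harmless. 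I expect this density step to be the main obstacle—the convolution part is routine, but showing that multiplying by a cut-off $\eta_m$ does not spoil the fractional seminorm requires the quantitative truncation lemmas, and it is the one place where the hypothesis $sp<n$ (through $p^\star_s$) is genuinely used. Once density is in hand, the statement that $\dot W^{s,p}(\mathbb{R}^n)$ is a Banach space follows as well: it is isometrically isomorphic to the complete space $\mathcal{D}^{s,p}(\mathbb{R}^n)$, so it is complete, and we have exhibited a dense copy of $C^\infty_c(\mathbb{R}^n)$ inside it. I would close by remarking that $[\,\cdot\,]_{W^{s,p}(\mathbb{R}^n)}$ really is a norm on $\dot W^{s,p}(\mathbb{R}^n)$—not merely a seminorm—again because of \eqref{eq:GNS sp < n}: a zero seminorm forces $\|u\|_{L^{p^\star_s}}=0$.
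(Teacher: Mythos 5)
Your proposal is correct and follows essentially the same route as the paper: construct $\mathcal{J}$ by passing to the $L^{p^\star_s}$-limit via the fractional Sobolev inequality, use Fatou to show the limit lies in $\dot W^{s,p}(\mathbb{R}^n)$, and obtain surjectivity from the density of $C^\infty_c(\mathbb{R}^n)$ (mollification plus cut-off, via the appendix lemmas). The only cosmetic difference is in establishing $[u_m-u]_{W^{s,p}}\to 0$: you apply Fatou a second time to the differences $u_k-u_m$, whereas the paper views the difference quotients $(u_m(x)-u_m(y))/|x-y|^{n/p+s}$ as a Cauchy sequence in $L^p(\mathbb{R}^n\times\mathbb{R}^n)$ and identifies the strong limit by a.e.\ uniqueness; both arguments are valid and deliver the same conclusion.
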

\begin{proof}
It is easy to see that $\dot W^{s,p}(\mathbb{R}^n)$ is a normed vector space. The fact that this is a Banach space will follow from the claimed isometry, that we are going to construct at the end of the proof.
\par
We now divide the rest of the proof in three parts.
\medskip

\noindent
{\bf Part 1: density of smooth functions}. We prove that $C^\infty_c(\mathbb{R}^n)$ is dense in $\dot W^{s,p}(\mathbb{R}^n)$. We need to prove that for every
$ u \in \dot{W}^{s,p} (\mathbb{R}^n)$, there exists a sequence $(u_m)_{m\in\mathbb{N}}\subset C^\infty_c(\mathbb{R}^n)$ such that
\[
\lim_{m\to\infty} [u_m-u]_{W^{s,p}(\mathbb{R}^n)}=0.
\]
In order to construct the sequence $(u_m)_{m\in\mathbb{N}}$, we consider the sequence of smoothing kernels $(\rho_m)_{m\ge 1}$ as in the statement of Lemma \ref{lm:cazzata}. Moreover, we introduce a sequence of cut-off functions
$ \eta_j\in C^\infty_c (\mathbb R^n)$ with $\mathrm{supp\,} \eta_j \subset B_{2j}$ such that
\[
0\le \eta_j\le 1,\qquad \eta_j\equiv 1 \mbox{ on } B_j,\qquad |\nabla \eta_j|\le \frac{C}{j}.
\]
By Lemma \ref{lm:truncation}, for every $m\ge1$ we have
\[
\lim_{j\to\infty} [(u\ast\rho_m)\,\eta_j-u\ast\rho_m]_{W^{s,p}(\mathbb{R}^n)}=0.
\]
Thus, for every $m\ge 1$ we can choose $j_m\in\mathbb{N}$ such that
\[
[(u\ast\rho_m)\,\eta_{j_m}-u\ast\rho_m]_{W^{s,p}(\mathbb{R}^n)}\le \frac{1}{m}.
\]
We finally set
\[
u_m=(u\ast \rho_m)\,\eta_{j_m},
\]
then this sequence has the desired approximation property. Indeed, observe that by the triangle inequality we have
\[
\begin{split}
[u_m-u]_{W^{s,p}(\mathbb{R}^n)}&=[(u\ast\rho_m)\,\eta_{j_m}-u]_{W^{s,p}(\mathbb{R}^n)}\\
&\le [(u\ast\rho_m)\,\eta_{j_m}-u\ast \rho_m]_{W^{s,p}(\mathbb{R}^n)}+[u\ast\rho_m-u]_{W^{s,p}(\mathbb{R}^n)}\\
&\le \frac{1}{m}+[u\ast\rho_m-u]_{W^{s,p}(\mathbb{R}^n)}.
\end{split}
\]
By taking the limit as $m$ goes to $\infty$ and appealing to Lemma \ref{lm:cazzata}, we get the conclusion.
\medskip

\noindent
{\bf Part 2: Cauchy sequences in $\mathcal{D}^{s,p}(\mathbb{R}^n)$}.
We take a sequence $(u_m)_{m\in\mathbb{N}}\subset C^\infty_c(\mathbb{R}^n)$, which is a Cauchy sequence with respect to the norm
\[
\varphi\mapsto [\varphi]_{W^{s,p}(\mathbb{R}^n)}.
\]
By using the fractional Sobolev inequality \eqref{eq:GNS sp < n}, we have that this is a Cauchy sequence in $L^{p^\star_s}(\mathbb{R}^n)$, as well.  The latter being a Banach space, we get that the sequence converges strongly
 in $L^{p^\star_s}(\mathbb{R}^n)$ to a function $u\in L^{p^\star_s}(\mathbb{R}^n)$. Furthermore, we can show that $u\in \dot W^{s,p}(\mathbb{R}^n)$.
 \par
Indeed, if we fix $\varepsilon>0$, then by definition of Cauchy sequence there exists $n_\varepsilon\in\mathbb{N}$ such that
\[
[u_m-u_k]_{W^{s,p}(\mathbb{R}^n)}<\varepsilon,\qquad \mbox{ for every } k,m\ge n_\varepsilon.
\]
In particular, by Minkowski inequality we get
\[
\begin{split}
[u_m]_{W^{s,p}(\mathbb{R}^n)}&\le[u_m-u_{n_\varepsilon}]_{W^{s,p}(\mathbb{R}^n)}+[u_{n_\varepsilon}]_{W^{s,p}(\mathbb{R}^n)}\\
&<\varepsilon+[u_{n_\varepsilon}]_{W^{s,p}(\mathbb{R}^n)},\qquad \mbox{ for every }m\ge n_\varepsilon.
\end{split}
\]
This shows that the sequence
\begin{equation}
\label{cauchydoble}
\left(\frac{u_m(x)-u_m(y)}{|x-y|^{\frac{n}{p}+s}}\right)_{m\in\mathbb{N}}\subset L^p(\mathbb{R}^n\times \mathbb{R}^n),
\end{equation}
is bounded in $L^p(\mathbb{R}^n\times \mathbb{R}^n)$. By using that $u_m$ converges almost everywhere to $u$ (up to a subsequence), Fatou's Lemma entails
\[
[u]_{W^{s,p}(\mathbb{R}^n)}\le \liminf_{m\to\infty}[u_m]_{W^{s,p}(\mathbb{R}^n)}<+\infty,
\]
i.\,e. $u\in \dot W^{s,p}(\mathbb{R}^n)$.
\par
We now observe that $L^p(\mathbb{R}^n\times \mathbb{R}^n)$ is a Banach space, thus the Cauchy sequence \eqref{cauchydoble} converges strongly in $L^p(\mathbb{R}^n\times\mathbb{R}^n)$. By uniqueness, the limit must coincide with
\[
\frac{u(x)-u(y)}{|x-y|^{\frac{n}{p}+s}}.
\]
In conclusion, we obtain that the Cauchy sequence $(u_m)_{m\in\mathbb{N}}$ converges with respect to the Gagliardo-Slobodecki\u{\i} seminorm to an element of $\dot W^{s,p}(\mathbb{R}^n)$.
\medskip

\noindent
{\bf Part 3: construction of the isometry}. We now take $U\in \mathcal{D}^{s,p}(\mathbb{R}^n)$ and choose a representative of this equivalence class, i.e. $U=\{(u_m)_{m\in\mathbb{N}}\}_{s,p}$. Thanks to {\bf Part 2}, we know that $(u_m)_{m\in\mathbb{N}}$ converges to a function $u\in \dot W^{s,p}(\mathbb{R}^n)$. We then define
\[
\mathcal{J}(U)=u.
\]
Observe that this is well-defined, since for any other representative $(\widetilde u_m)_{m\in\mathbb{N}}$ belonging to the class $U$, we still have
\[
\lim_{m\to\infty}[\widetilde u_m-u]_{W^{s,p}(\mathbb{R}^n)}\le \lim_{m\to\infty} [\widetilde u_m-u_m]_{W^{s,p}(\mathbb{R}^n)}+\lim_{m\to\infty} [u_m-u]_{W^{s,p}(\mathbb{R}^n)}=0.
\]
Moreover, it is easy to see that $\mathcal{J}$ is linear. It is also immediate to obtain that this is an isometry, since
\[
\|U\|_{\mathcal{D}^{s,p}(\mathbb{R}^n)}=\lim_{m\to\infty} [u_m]_{W^{s,p}(\mathbb{R}^n)}=[u]_{W^{s,p}(\mathbb{R}^n)}=\|u\|_{\dot W^{s,p}}=\|\mathcal{J}(U)\|_{\dot W^{s,p}}.
\]
We are left with proving that $\mathcal{J}$ is surjective. From {\bf Part 1} we know that for every $u\in \dot W^{s,p}(\mathbb{R}^n)$
there exists a sequence $(u_m)_{n\in\mathbb{N}}\subset C^\infty_c(\mathbb{R}^n)$ such that
\[
\lim_{n\to\infty} [u_m-u]_{W^{s,p}(\mathbb{R}^n)}=0.
\]
In particular, this is a Cauchy sequence with respect to the Gagliardo-Slobodecki\u{\i} seminorm. Thus we get
\[
u=\mathcal{J}\Big(\{(u_m)_{m\in\mathbb{N}}\}_{s,p}\Big).
\]
This concludes the proof.
\end{proof}
\begin{remark}
We note that
\[
W^{s,p} (\mathbb{R}^n) \subset L^p (\mathbb{R}^n) \cap L^{p^\star_s} (\mathbb{R}^n),
\]
with continuous inclusion.
The inclusion in $L^p(\mathbb{R}^n)$ is a straightforward consequence of the definition \eqref{wsp} of $W^{s,p}(\mathbb{R}^n)$. On the other hand, the inclusion in $L^{p^\star_s} (\mathbb{R}^n)$ follows from the fractional Sobolev inequality and the density of $C^\infty_c(\mathbb{R}^n)$ functions in $W^{s,p}(\mathbb{R}^n)$.
\par
We can exploit this summability information to see that
\[
W^{s,p} (\mathbb{R}^n) \subsetneq \dot W^{s,p} (\mathbb{R}^n).
\]
For example, it is not difficult to see that the function
\[
\varphi(x)=(1+|x|^2)^{-\frac{\alpha}{2}},\qquad \mbox{ for } \frac{n}{p}-s<\alpha\le \frac{n}{p},
\]
is such that
\[
\varphi\in \dot W^{s,p} (\mathbb{R}^n)\setminus W^{s,p}(\mathbb{R}^n).
\]
Indeed, $\varphi\not\in L^p(\mathbb{R}^n)$ thanks to the choice of $\alpha$. In order to see that $\varphi$ has a finite Gagliardo-Slobodecki\u{\i} seminorm, it is useful to decompose the seminorm as follows
\[
\begin{split}
\frac{1}{s\,(1-s)}\,[\varphi]_{W^{s,p}(\mathbb{R}^n)}^p&=\iint_{(\mathbb{R}^n\setminus B_1(0))\times(\mathbb{R}^n\setminus B_1(0))} \frac{|\varphi(x)-\varphi(y)|^p}{|x-y|^{n+s\,p}}\,dx\,dy+\iint_{B_1(0) \times B_1(0)} \frac{|\varphi(x)-\varphi(y)|^p}{|x-y|^{n+s\,p}}\,dx\,dy\\
&+2\,\iint_{B_1(0)\times(\mathbb{R}^n\setminus B_2(0))} \frac{|\varphi(x)-\varphi(y)|^p}{|x-y|^{n+s\,p}}\,dx\,dy\\
&+2\,\iint_{B_1(0)\times (B_2(0)\setminus B_1(0))} \frac{|\varphi(x)-\varphi(y)|^p}{|x-y|^{n+s\,p}}\,dx\,dy=:\mathcal{I}_1+\mathcal{I}_2+\mathcal{I}_3+\mathcal{I}_4.
\end{split}
\]
The integrals $\mathcal{I}_2$ and $\mathcal{I}_4$ are finite, thanks to the Lipschitz character of $\varphi$. The third integral $\mathcal{I}_3$ is finite, by using that $\varphi\in L^\infty(\mathbb{R}^n)$ and that the function
\[
x\mapsto \int_{\mathbb{R}^n\setminus B_2(0)} \frac{1}{|x-y|^{n+s\,p}}\,dy,
\]
is uniformly bounded for $x\in B_1(0)$. Finally, for the finiteness of $\mathcal{I}_1$, it is sufficient to observe that\footnote{We can use the following fact: if we set
\[
f(\tau)=\frac{\tau}{(1+\tau^\frac{2}{\alpha})^\frac{\alpha}{2}},\qquad \mbox{ for } \tau\ge 0,
\]
then this is a Lipschitz function and we have
\[
\varphi(x)=(1+|x|^2)^{-\frac{\alpha}{2}}=f(|x|^{-\alpha}).
\]}
\[
|\varphi(x)-\varphi(y)|\le C\, \Big||x|^{-\alpha}-|y|^{-\alpha}\Big|,\qquad \mbox{ for every }x,y\in \mathbb{R}^n\setminus B_1(0),
\]
so that
\[
\mathcal{I}_1\le \iint_{(\mathbb{R}^n\setminus B_1(0))\times(\mathbb{R}^n\setminus B_1(0))} \frac{|\Psi(x)-\Psi(y)|^p}{|x-y|^{n+s\,p}}\,dx\,dy,\qquad \mbox{ with } \Psi(x)=|x|^{-\alpha}.
\]
The last double integral is then finite, by appealing to \cite[Lemma A.1]{BMS}.
\end{remark}
\begin{remark}
We take the occasion to recall that the interesting question of determining the sharp constant in \eqref{eq:GNS sp < n} is still open, except for the case $p=2$, solved in \cite{CT}. It is clear that the sharp constant is given by
	\begin{equation}
	\label{talenti}
		 \mathcal{S}_{s,p}= \inf_{u\in \mathcal{D}^{s,p}(\mathbb{R}^n)}\left\{[u]^p_{W^{s,p} (\mathbb{R}^n)}\, :\, \| u \|_{L^{p^\star_s}(\mathbb{R}^n)} =1\right\}.
	\end{equation}
	The relevant Euler-Lagrange optimality condition is a nonlinear eigenvalue-type equation involving the operator $(-\Delta_p)^s$, already presented in the Introduction. Namely, an extremal for the previous problem has to be a constant sign solution of
\[
(-\Delta_p)^s u=\mathcal{S}_{s,p}\,u^{p^\star_s-1},\qquad \mbox{ in } \mathbb{R}^n.
\]
Some properties of solutions to \eqref{talenti} have been investigated in \cite[Theorem 1.1]{BMS}.
\end{remark}

\section{Characterisation for $s\,p > n$}
\label{sec:4}
Instead of the fractional Sobolev inequality, in this range we have
the fractional Morrey inequality, see Corollary \ref{coro:morrey}.
However, unlike Sobolev's inequality, inequality \eqref{eq:Morrey} does not detect constants.
Even worse, in this range constant functions
can be approximated by sequences in  $C_c^\infty (\mathbb{R}^n)$ with respect to the Gagliardo-Slobodecki\u{\i} seminorm.

\begin{lemma}
	\label{lem:homogeneous sobolev counter}
	Let
$s \in (0,1]$ and	
$s\,p > n$.
There exists a sequence $(\varphi_m)_{m\in\mathbb{N}} \subset C_c^\infty (\mathbb{R}^n)$ such that
\begin{equation*}
 [ \varphi_m ] _{W^{s,p} (\mathbb{R}^n)} \le C\, m^{\frac n p - s} \to 0
 \qquad \text{ and } \qquad
 \varphi_m \to 1 \text{  uniformly over compact sets},
\end{equation*}
as $m\to +\infty$. Hence, $(\varphi_m)_{m\in\mathbb{N}}$ is equivalent in $\mathcal{D}^{s,p} (\mathbb{R}^n)$ to zero, although its pointwise limit is~$1$.
\end{lemma}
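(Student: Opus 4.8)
The plan is to produce the whole sequence by rescaling one fixed bump function, so that all the analytic content is absorbed into the scaling identity \eqref{eq:scaling}. I would first fix, once and for all, a function $\varphi\in C_c^\infty(\mathbb{R}^n)$ with $0\le\varphi\le 1$, $\varphi\equiv 1$ on $B_1(0)$ and $\mathrm{supp}\,\varphi\subset B_2(0)$. Since $\varphi$ is smooth and compactly supported, $C_\varphi:=[\varphi]_{W^{s,p}(\mathbb{R}^n)}<+\infty$: for $s\in(0,1)$ one splits the double integral into the region $|x-y|<1$, handled by the Lipschitz bound $|\varphi(x)-\varphi(y)|\le\|\nabla\varphi\|_{L^\infty}\,|x-y|$ together with $s\,p<p$, and the region $|x-y|\ge 1$, handled by $0\le\varphi\le 1$ and the boundedness of $\mathrm{supp}\,\varphi$; for $s=1$ it is simply $\|\nabla\varphi\|_{L^p(\mathbb{R}^n)}<+\infty$.

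Next I would set $\varphi_m(x):=\varphi(x/m)$ for $m\ge 1$, which belong to $C_c^\infty(\mathbb{R}^n)$ with $\mathrm{supp}\,\varphi_m\subset B_{2m}(0)$. Applying \eqref{eq:scaling} with $\lambda=1/m$, so that $\varphi_m$ is exactly $(\varphi)_{1/m}$ in that notation, gives
\[
[\varphi_m]_{W^{s,p}(\mathbb{R}^n)}=\Big(\tfrac1m\Big)^{s-\frac np}\,[\varphi]_{W^{s,p}(\mathbb{R}^n)}=C_\varphi\,m^{\frac np-s}.
\]
Since $s\,p>n$ we have $n/p-s<0$, so the right-hand side tends to $0$ as $m\to\infty$, which is the claimed bound. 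For the second property, given any compact $K\subset\mathbb{R}^n$ I would pick $m_0$ with $K\subset B_{m_0}(0)$; then $\varphi_m\equiv 1$ on $K$ for every $m\ge m_0$, so in fact $\varphi_m$ is eventually equal to $1$ on $K$ and, in particular, $\varphi_m\to 1$ uniformly over compact sets.

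Finally, because $[\varphi_m]_{W^{s,p}(\mathbb{R}^n)}\to 0$, the sequence $(\varphi_m)_{m\in\mathbb{N}}$ is Cauchy for the seminorm and equivalent under $\sim_{s,p}$ to the constant sequence $0$, hence it represents the zero element of $\mathcal{D}^{s,p}(\mathbb{R}^n)$, even though its pointwise (indeed locally uniform) limit is the constant $1$. There is essentially no obstacle in this argument: the only step worth recording is the finiteness $C_\varphi<+\infty$, which is standard and is implicitly used throughout the paper (it is what makes $[\,\cdot\,]_{W^{s,p}(\mathbb{R}^n)}$ a norm on $C_c^\infty(\mathbb{R}^n)$), while the substance is carried entirely by the already-established scaling law \eqref{eq:scaling}.
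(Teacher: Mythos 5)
Your proof is correct and follows exactly the paper's argument: rescale a fixed cut-off $\varphi\equiv 1$ on $B_1$ supported in $B_2$ by setting $\varphi_m(x)=\varphi(x/m)$ and invoke the scaling identity \eqref{eq:scaling}. The only differences are cosmetic — you spell out the finiteness of $[\varphi]_{W^{s,p}(\mathbb{R}^n)}$ and the locally uniform convergence, which the paper leaves implicit.
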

\begin{proof}
The proof is just based on the scaling properties of the Sobolev-Slobodecki\v{\i} seminorm, as in the local case.
Let $\varphi \in C_c^\infty (B_2)$ be a non-negative cut-off function, such that $\varphi$ coincides identically with $1$ on $B_1$. We define the rescaled sequence
\[
\varphi_m (x) = \varphi \left(\frac{x}{m}\right),\qquad \mbox{ for every }m\ge 1.
\]
By recalling \eqref{eq:scaling}, we have that
\nc
\[
[ \varphi_m ] _{W^{s,p} (\mathbb{R}^n)} = m^{\frac n p - s}\,  [ \varphi ] _{W^{s,p} (\mathbb{R}^n)}.
\]
The conclusion now follows, thanks to the fact that $n/p-s<0$.
\end{proof}
\begin{remark}
	Notice that this construction is linked with the {\it relative $(s,p)-$capacity} of a set $\omega\Subset \Omega$,	defined as
	\begin{equation*}
		(s,p) - \mathrm{cap}_\Omega (\omega) = \inf \left\{\iint_{\Omega \times \Omega} \frac{|\varphi(x)-\varphi(y)|^p}{|x-y|^{n+s\,p}}\,dx\,dy \, :\, \varphi \in C_c^\infty (\Omega),\, \varphi = 1 \text{ in } \omega \right\},
	\end{equation*}
	see for example \cite{Warma}. For $s=1$ this value can be explicitly computed, and the relevant Euler-Lagrange equation in linked to the usual $p-$Laplacian.
\end{remark}
We need the following technical result.
\begin{lemma}
	\label{lem:homog Sobolev sp > n}
	Let $s \in (0,1]$ and $s\,p > n$, we define $\alpha=s-n/p$. Let $(u_m)_{m\in\mathbb{N}} \subset C_c^\infty (\mathbb{R}^n)$ be Cauchy sequence with respect to $[\,\cdot\,]_{W^{s,p} (\mathbb{R}^n)}$. Then there exists another sequence $(\widetilde u_m)_{m\in\mathbb{N}}\subset C_c^\infty (\mathbb{R}^n)$ such that:
\begin{itemize}
\item we have	
	\[
	\lim_{m\to\infty}[u_m - \widetilde u_m]_{W^{s,p} (\mathbb{R}^n)}=0,
	\]
\item $\widetilde u_m$ converges uniformly over compact sets to a $\alpha-$H\"older continuous function $u$.
\end{itemize}
Moreover, this function $u$ is such that $u(0) = 0$,
	\begin{equation}
	\label{eq:estimates v_m}
		[\widetilde u_m - u]_{W^{s,p} (\mathbb{R}^n)} \to 0, \qquad 	[u]_{W^{s,p} (\mathbb{R}^n)} = \lim_{m\to\infty} [u_m]_{W^{s,p} (\mathbb{R}^n)},
	\end{equation}
	 and
	\begin{equation}
		\label{eq:homogeneous Sobolev Morrey}
		|u (x) - u(y)| \le C\, [u]_{W^{s,p} (\mathbb{R}^n)} |x-y|^\alpha, \qquad \mbox{ for every } x , y \in \mathbb{R}^n.
	\end{equation}
\end{lemma}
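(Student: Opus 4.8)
The plan is to normalize the Cauchy sequence by subtracting its values at the origin (after a small modification to make this legitimate), exploit the fractional Morrey inequality to get uniform $\alpha$-Hölder control, extract a locally uniform limit by Ascoli--Arzelà, and finally promote convergence in seminorm via Fatou and the completeness of $L^p(\R^n\times\R^n)$ on the Gagliardo kernel.

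\medskip

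\noindent\textbf{Step 1: construct $\widetilde u_m$.} The sequence $(u_m)$ need not satisfy $u_m(0)=0$, but by Corollary \ref{coro:morrey} applied to differences, the \emph{oscillation} of $u_m$ is uniformly controlled: $[u_m]_{C^{0,\alpha}}\le C\,[u_m]_{W^{s,p}}$, and the right-hand side is bounded since $(u_m)$ is Cauchy. I would set
\[
\widetilde u_m \vcentcolon= u_m - c_m\,\varphi_m,
\]
where $(\varphi_m)$ is the null-sequence from Lemma \ref{lem:homogeneous sobolev counter} (so $[\varphi_m]_{W^{s,p}}\to 0$ and $\varphi_m\to 1$ locally uniformly) and $c_m$ is chosen so that $\widetilde u_m(0)=0$; since $\varphi_m$ equals $1$ near the origin for $m$ large, this just means $c_m=u_m(0)$. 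The key point is that $|c_m|=|u_m(0)|$ stays bounded — indeed, if it did not, then along a subsequence $|c_m|\to\infty$, and combined with the uniform Hölder bound on $u_m$ one would violate the finiteness of $[u_m]_{W^{s,p}}$ on any fixed ball via Lemma \ref{lm:gracias} (a function with huge constant part but controlled oscillation on a ball, times the kernel, integrates to something large — here one uses that $\varphi_m$ being genuinely compactly supported forces $u_m$ itself to be small somewhere). From $[\widetilde u_m-u_m]_{W^{s,p}}=|c_m|\,[\varphi_m]_{W^{s,p}}\le C\,|c_m|\,m^{n/p-s}\to 0$ we get the first bullet. I expect \textbf{this boundedness of $c_m$ to be the main obstacle} and the only genuinely nontrivial point; everything else is soft.

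\medskip

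\noindent\textbf{Step 2: uniform Hölder bound and passage to the limit.} By construction $[\widetilde u_m]_{W^{s,p}}$ is bounded (it differs from $[u_m]_{W^{s,p}}$ by something going to zero) and $\widetilde u_m(0)=0$, so Corollary \ref{coro:morrey} gives
\[
|\widetilde u_m(x)-\widetilde u_m(y)|\le C\,[\widetilde u_m]_{W^{s,p}}\,|x-y|^\alpha,\qquad |\widetilde u_m(x)|\le C\,[\widetilde u_m]_{W^{s,p}}\,|x|^\alpha,
\]
so $(\widetilde u_m)$ is equi-Hölder and pointwise bounded on every compact set. Ascoli--Arzelà yields a subsequence converging locally uniformly to some $u$ with $u(0)=0$, and passing to the limit in the displayed inequalities gives that $u\in C^{0,\alpha}(\R^n)$. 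A standard diagonal/uniqueness argument shows the whole sequence converges (the limit is forced once we know it exists along subsequences, because $(\widetilde u_m)$ is Cauchy in seminorm — see Step 3).

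\medskip

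\noindent\textbf{Step 3: convergence in seminorm and the final estimate.} Since $(u_m)$, hence $(\widetilde u_m)$, is Cauchy for $[\,\cdot\,]_{W^{s,p}}$, the functions
\[
\Phi_m(x,y)\vcentcolon=\frac{\widetilde u_m(x)-\widetilde u_m(y)}{|x-y|^{\frac np+s}}
\]
form a Cauchy sequence in $L^p(\R^n\times\R^n)$ (up to the constant $s(1-s)$), which is complete, so $\Phi_m\to\Phi$ strongly; since $\widetilde u_m\to u$ locally uniformly, $\Phi_m\to (u(x)-u(y))/|x-y|^{n/p+s}$ a.e., so by uniqueness of limits $\Phi(x,y)=(u(x)-u(y))/|x-y|^{n/p+s}$ and $[\widetilde u_m-u]_{W^{s,p}}=\|\Phi_m-\Phi\|_{L^p}\to 0$ (times $\sqrt[p]{s(1-s)}$). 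In particular $[u]_{W^{s,p}}=\lim_m[\widetilde u_m]_{W^{s,p}}=\lim_m[u_m]_{W^{s,p}}$, which is \eqref{eq:estimates v_m}. Finally, applying Corollary \ref{coro:morrey} directly to $u$ — which is legitimate now that $u\in C^{0,\alpha}$ has finite seminorm, after a routine approximation of $u$ by smooth compactly supported functions in the seminorm, or simply by passing to the limit in the estimate for $\widetilde u_m$ — gives \eqref{eq:homogeneous Sobolev Morrey}.
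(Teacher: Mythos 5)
Your Steps 2 and 3 are sound and essentially match the paper's argument (the paper avoids Ascoli--Arzel\`a by applying Morrey directly to $\widetilde u_m - \widetilde u_k$, both of which vanish at the origin, getting Cauchy-ness in $C(K)$ immediately; but your detour is harmless once seminorm Cauchy-ness is in hand). The problem is Step 1, and you correctly flagged it as the crux — but your resolution does not work.

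The claim that $c_m = u_m(0)$ stays bounded is false, and your heuristic argument for it cannot be repaired. The seminorm $[\,\cdot\,]_{W^{s,p}}$ does not see constants, and compact support plus bounded H\"older seminorm only gives $|u_m(0)| \le C\,[u_m]_{W^{s,p}}\,R_m^\alpha$ where $R_m$ is the radius of the support of $u_m$, which can grow without bound. A concrete counterexample: fix any $v \in C^\infty_c(\mathbb{R}^n)$ and take $u_m = v + c_m\,\varphi_m$ with $c_m = m^{(s-n/p)/2} \to \infty$. Then $[u_m - v]_{W^{s,p}} \le c_m\,[\varphi_m]_{W^{s,p}} \le C\,m^{-(s-n/p)/2} \to 0$, so $(u_m)$ is a perfectly good Cauchy sequence in the seminorm, yet $u_m(0) = v(0) + c_m \to \infty$. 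So if you subtract $u_m(0)\,\varphi_m$ with the fixed null-sequence index $m$, you get $[u_m-\widetilde u_m]_{W^{s,p}} = |u_m(0)|\,[\varphi_m]_{W^{s,p}} \approx c_m \cdot m^{n/p-s}$, and there is no reason for this to go to zero.

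The paper's actual device is to decouple the index of the null-sequence from $m$: one sets $\widetilde u_m = u_m - u_m(0)\,\varphi_{M_m}$, where $M_m$ is chosen large enough \emph{depending on} $|u_m(0)|$, namely $M_m \ge \max\bigl\{m,\ (m\,|u_m(0)|)^{p/(s\,p-n)}\bigr\}$. Then
\[
[u_m - \widetilde u_m]_{W^{s,p}} = |u_m(0)|\,[\varphi_{M_m}]_{W^{s,p}} \le C\,|u_m(0)|\,M_m^{\frac{n-s\,p}{p}} \le \frac{C}{m} \to 0,
\]
with no boundedness assumption on $u_m(0)$ at all. This is the one non-soft idea you were missing; with this replacement for your Step 1, the rest of your proof goes through as written.
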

\begin{proof}
	We construct the sequence as follows. We take
	\begin{equation*}
	M_m \ge \max \left \{  m , \left(m\,|u_m (0)|\right)^{\frac p {s\,p - n}}   \right \},
	\end{equation*}
	and consider $\varphi_m$ given by Lemma \ref{lem:homogeneous sobolev counter}. By construction $M_m$ diverges to $\to +\infty$ as $m$ goes to  $+\infty$.
	Then we define
	\begin{equation*}
		\widetilde u_m (x) = u_m (x) - u_m (0)\, \varphi_{M_m} (x),\qquad \mbox{ for } m\in\mathbb{N}.
	\end{equation*}
We now show that $\widetilde{u}_m$ has the claimed properties. At first, by construction we have
\[
\begin{split}
[u_m - \widetilde u_m]_{W^{s,p} (\mathbb{R}^n)} = |u_m (0)|\, [\varphi_{M_m}]_{W^{s,p} (\mathbb{R}^n)}&\le C\,|u_m(0)|\,M_m^\frac{n-s\,p}{p}\\
&\le C\, |u_m(0)|\,\left(m\,|u_m (0)|\right)^{-1},
\end{split}
\]
which implies that
\[
\lim_{m\to\infty}[u_m - \widetilde u_m]_{W^{s,p} (\mathbb{R}^n)}=0,
\]
as desired. Observe that this implies that $(\widetilde u_m)_{m\in\mathbb{N}} \subset C_c^\infty (\mathbb{R}^n)$ is still a Cauchy sequence with respect to $[\,\cdot\,]_{W^{s,p} (\mathbb{R}^n)}$.

\medskip
In order to infer the uniform convergence, it is sufficient to show that $(\widetilde u_m)_{m\in\mathbb{N}}$ is a Cauchy sequence in $C(K)$, for every $K\subset \mathbb{R}^n$ compact set. This follows directly, by
applying \eqref{eq:Morrey} to $\widetilde{u}_m - \widetilde{u}_k$, i.\,e.
\[
|\widetilde u_m (x) - \widetilde u_k (x)| \le  C\, |x|^\alpha\, [\widetilde u_m - \widetilde u_k]_{W^{s,p} (\mathbb{R}^n)}.
\]
Hence, it converges uniformly to some function $u \in C(K)$. Since $K$ is arbitrary and the limit is unique, $u$ is defined for every $x \in \mathbb{R}^n$. Moreover, it holds $u(0)=0$, since we have $\widetilde u_m(0)=0$ by construction.
\medskip

By using that $(\widetilde u_m)_{m\in\mathbb{N}} \subset C_c^\infty (\mathbb{R}^n)$ is still a Cauchy sequence with respect to $[\,\cdot\,]_{W^{s,p} (\mathbb{R}^n)}$ and arguing as in {\bf Part 3} of the proof of Theorem \ref{thm:sp < n}, we deduce \eqref{eq:estimates v_m}. 	
The estimate \eqref{eq:homogeneous Sobolev Morrey} can then be obtained by passing to the limit in \eqref{eq:Morrey}.
\end{proof}

The major difference with respect to the case $s\,p<n$ is that now the elements in $\mathcal{D}^{s,p} (\mathbb{R}^n)$ {\it can not} be uniquely represented by functions. Indeed, when $s\,p > n$, any sequence $(u_m)_{m\in\mathbb{N}}\subset C_c^\infty (\mathbb{R}^n)$ which is Cauchy in the norm $[\,\cdot\,]_{W^{s,p} (\mathbb{R}^n)}$ is equivalent to the sequence
 \[
 v_m = u_m + C\, \varphi_m,
 \]
for any constant $C\in\mathbb{R}$. Here $\varphi_m$ is the same as in Lemma \ref{lem:homogeneous sobolev counter}.

\medskip
However, one can show that functions that are approximated by equivalent Cauchy sequences, actually coincide up to a constant. In other words, the homogeneous space $\mathcal{D}^{s,p}(\mathbb{R}^n)$ can be identified with a space of equivalence classes of functions differing by an additive constant.

\medskip
More precisely, we have the following characterization, which is the main result of this section.
\begin{theorem}
\label{thm:sp > n}
Let $s \in (0,1]$ and $s\,p > n$, we set $\alpha= s -n/p$. We consider the quotient space
\begin{equation*}
\dot W^{s,p} (\mathbb{R}^n) \vcentcolon=  \frac{\Big\{  u \in C^{0,\alpha}(\mathbb{R}^n)\, :\, [ u ]_{W^{s,p} (\mathbb{R}^n)} < +\infty  \Big\}}{\sim_C},
\end{equation*}
where
\begin{equation*}
\label{eq:equivalence relation up to constant}
u\sim_C v \qquad \Longleftrightarrow\qquad u-v \mbox{ is constant}.
\end{equation*}
We will denote by $\{ u\}_{C}$ the equivalence class of $u$ with respect to this relation.
We endow this space with the norm
\[
\|\{u\}_C\|_{\dot W^{s,p} (\mathbb{R}^n)} = [u]_{W^{s,p} (\mathbb{R}^n)},\qquad \mbox{ for every }u\in C^{0,\alpha}(\mathbb{R}^n) \mbox{ such that }[ u ]_{W^{s,p} (\mathbb{R}^n)} < +\infty .
\]
Then this is a Banach space and there exists a linear isometric isomorphism
	\begin{equation*}
	\mathcal{J}: \mathcal{D}^{s,p} (\mathbb{R}^n) \to   \dot W^{s,p} (\mathbb{R}^n).
	\end{equation*}
In other words, the space $\mathcal{D}^{s,p}(\mathbb{R}^n)$ can be identified with $\dot W^{s,p}(\mathbb{R}^n)$.	
\end{theorem}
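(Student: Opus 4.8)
My plan is to reproduce the three--part scheme of the proof of Theorem~\ref{thm:sp < n}, with the fractional Sobolev inequality replaced by the fractional Morrey inequality (Corollary~\ref{coro:morrey}) and by the structural Lemma~\ref{lem:homog Sobolev sp > n}, paying attention throughout to the fact that we now work modulo constants. As a preliminary remark I would note that $\dot W^{s,p}(\mathbb{R}^n)$ is a normed vector space: the assignment $\{u\}_C\mapsto[u]_{W^{s,p}(\mathbb{R}^n)}$ is well defined on classes because the Gagliardo-Slobodecki\u{\i} seminorm annihilates constants, and it is a genuine norm because $[u]_{W^{s,p}(\mathbb{R}^n)}=0$ forces $u(x)=u(y)$ for a.e.\ pair $(x,y)$, so the continuous function $u$ is constant and $\{u\}_C=\{0\}_C$. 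The completeness of $\dot W^{s,p}(\mathbb{R}^n)$ will then follow automatically from the isometry, since $\mathcal{D}^{s,p}(\mathbb{R}^n)$ is complete by construction.

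\textbf{Part~1 (density of $C^\infty_c$).} Given $u\in C^{0,\alpha}(\mathbb{R}^n)$ with $[u]_{W^{s,p}(\mathbb{R}^n)}<+\infty$, I want a sequence $(u_m)_{m}\subset C^\infty_c(\mathbb{R}^n)$ with $[u_m-u]_{W^{s,p}(\mathbb{R}^n)}\to0$. Up to subtracting a constant --- which alters neither $\{u\}_C$ nor the seminorm --- I may assume $u(0)=0$, so that $|u(x)|\le[u]_{C^{0,\alpha}(\mathbb{R}^n)}\,|x|^{\alpha}$ with $\alpha=s-n/p\in(0,1]$. Then I would mollify and truncate, $u_m=(u\ast\rho_m)\,\eta_{j_m}$, exactly as in Part~1 of the proof of Theorem~\ref{thm:sp < n}, invoking the convolution lemma of Appendix~\ref{sec:convolution} and the truncation lemma of Appendix~\ref{sec:6} together with a diagonal choice of $j_m$. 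The new feature is that these lemmas must now be applied to a function with at most polynomial growth $|x|^{\alpha}$, $\alpha<s$, rather than to an $L^{p^\star_s}$ function; this is precisely the regime the truncation lemmas are designed to handle. Any such sequence $(u_m)$ is automatically Cauchy for $[\,\cdot\,]_{W^{s,p}(\mathbb{R}^n)}$.

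\textbf{Part~2 (limits of Cauchy sequences) and Part~3 (the isometry).} Part~2 is precisely the content of Lemma~\ref{lem:homog Sobolev sp > n}: a $[\,\cdot\,]_{W^{s,p}(\mathbb{R}^n)}$-Cauchy sequence $(u_m)\subset C^\infty_c(\mathbb{R}^n)$ can be modified into $(\widetilde u_m)$ with $[u_m-\widetilde u_m]_{W^{s,p}(\mathbb{R}^n)}\to0$, converging locally uniformly to some $u\in C^{0,\alpha}(\mathbb{R}^n)$ with $u(0)=0$, $[\widetilde u_m-u]_{W^{s,p}(\mathbb{R}^n)}\to0$ and $[u]_{W^{s,p}(\mathbb{R}^n)}=\lim_m[u_m]_{W^{s,p}(\mathbb{R}^n)}$. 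For Part~3, given $U=\{(u_m)_m\}_{s,p}$ I choose a representative, run Part~2 and set $\mathcal{J}(U)=\{u\}_C$. To see this is well defined, if $(u_m)\sim_{s,p}(v_m)$ produce H\"older limits $u$ and $v$, then by the triangle inequality $[u-v]_{W^{s,p}(\mathbb{R}^n)}$ is bounded by the sum of the five $W^{s,p}$-seminorms of $u-\widetilde u_m$, $\widetilde u_m-u_m$, $u_m-v_m$, $v_m-\widetilde v_m$, $\widetilde v_m-v$, all of which vanish in the limit; hence $[u-v]_{W^{s,p}(\mathbb{R}^n)}=0$ and $u-v$ is constant, i.e.\ $\{u\}_C=\{v\}_C$. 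Applying this with $(v_m)=(u_m)$ also shows independence of the auxiliary choices in Lemma~\ref{lem:homog Sobolev sp > n}, so $\mathcal{J}(U)$ is the class of the locally uniform limit of \emph{any} admissible modified sequence attached to $U$. Linearity is then immediate, because $\widetilde u_m+\widetilde v_m$ (respectively $\lambda\,\widetilde u_m$) is an admissible modified sequence for $U+V$ (respectively $\lambda\,U$), converging to $u+v$ (respectively $\lambda\,u$), still vanishing at $0$. The isometry property is read off from \eqref{eq:estimates v_m}, namely $\|U\|_{\mathcal{D}^{s,p}(\mathbb{R}^n)}=\lim_m[u_m]_{W^{s,p}(\mathbb{R}^n)}=[u]_{W^{s,p}(\mathbb{R}^n)}=\|\{u\}_C\|_{\dot W^{s,p}(\mathbb{R}^n)}$, and surjectivity is Part~1: for $\{u\}_C\in\dot W^{s,p}(\mathbb{R}^n)$ the sequence $(u_m)$ built there is Cauchy, converges locally uniformly to $u$, and $\mathcal{J}(\{(u_m)_m\}_{s,p})=\{u\}_C$. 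Since an isometric linear bijection from the complete space $\mathcal{D}^{s,p}(\mathbb{R}^n)$ onto $\dot W^{s,p}(\mathbb{R}^n)$ forces the target to be Banach, this completes the scheme.

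\textbf{Main obstacle.} I expect the only genuinely delicate point to be Part~1. Contrary to the subconformal case, here $u$ need not lie in any Lebesgue space nor decay at infinity: after the normalization $u(0)=0$ it only grows like $|x|^{\alpha}$ with $\alpha<s$, and the cut-off step $(u\ast\rho_m)\,\eta_j\to u\ast\rho_m$ in the Gagliardo-Slobodecki\u{\i} seminorm has to be controlled by a truncation estimate tailored to functions of slow polynomial growth --- which is exactly the role played by the truncation lemmas of Appendix~\ref{sec:6}. Everything else is a routine adaptation of the argument for $s\,p<n$ combined with Lemma~\ref{lem:homog Sobolev sp > n}.
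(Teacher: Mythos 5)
Your Parts~2 and~3 match the paper's construction of $\mathcal{J}$ via Lemma~\ref{lem:homog Sobolev sp > n}, and the well-definedness, linearity, and isometry arguments are sound. The gap is in Part~1 (surjectivity), and it is exactly at the spot you flagged as ``the only genuinely delicate point.'' You plan to run the diagonal argument of Theorem~\ref{thm:sp < n} verbatim: choose $j_m$ so that $[(u\ast\rho_m)\,\eta_{j_m}-u\ast\rho_m]_{W^{s,p}(\mathbb{R}^n)}\le 1/m$. That choice requires the truncation error to tend to zero as $j\to\infty$ for each fixed $m$. But the relevant truncation estimate for $s\,p>n$ is Lemma~\ref{lm:truncation2}, and it only yields a uniform \emph{bound} $[(u\ast\rho_m)\,\eta_j-u\ast\rho_m]_{W^{s,p}(\mathbb{R}^n)}\le C$ independent of $j$, \emph{not} convergence to zero. (This is not a defect of the lemma: for a function growing like $|x|^\alpha$ with no integrability at infinity, the bilinear tail contribution does not vanish as the cut-off radius grows, in contrast with the $L^{p^\star_s}$ setting of Lemma~\ref{lm:truncation}.) So as written your diagonal choice of $j_m$ is not available, and Part~1 as stated does not go through.

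The paper closes this hole with a weak-compactness argument that is absent from your plan. From the uniform bound, the difference quotients $\bigl((u\ast\rho_m)(x)(\eta_j(x)-1)-(u\ast\rho_m)(y)(\eta_j(y)-1)\bigr)/|x-y|^{n/p+s}$ form a bounded sequence in $L^p(\mathbb{R}^n\times\mathbb{R}^n)$, hence a subsequence converges weakly; the weak limit is $0$ because $1-\eta_j\to 0$ locally uniformly. Mazur's Lemma then produces convex combinations $\widetilde\eta_j$ of the original cut-offs for which $[(u\ast\rho_m)\,\widetilde\eta_j-u\ast\rho_m]_{W^{s,p}(\mathbb{R}^n)}\to 0$, after which the diagonal choice of $j_m$ works. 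You should incorporate this ``bounded $\Rightarrow$ weak limit $0$ $\Rightarrow$ Mazur'' step explicitly; without it the density of $C^\infty_c(\mathbb{R}^n)$, and hence surjectivity of $\mathcal{J}$, is not established. Everything else in the proposal is essentially the paper's argument.
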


\begin{proof}
It is immediate to see that $\dot W^{s,p} (\mathbb{R}^n)$ is a normed vector space. Indeed, constant functions all belong to the equivalence class $\{0\}_C$.
\par
We now construct the isometry. The fact that $\dot W^{s,p} (\mathbb{R}^n)$ is a Banach space will follow at once.
For any class
\[
U=\{(u_m)_{m\in\mathbb{N}}\}_{s,p}\in \mathcal{D}^{s,p}(\mathbb{R}^n),
\]
we may apply Lemma \ref{lem:homog Sobolev sp > n} and consider the new Cauchy sequence $(\widetilde u_m)_{m\in\mathbb{N}}$. By construction, we have
\[
U=\{(u_m)_{m\in\mathbb{N}}\}_{s,p}=\{(\widetilde u_m)_{m\in\mathbb{N}}\}_{s,p},
\]
and by Lemma \ref{lem:homog Sobolev sp > n}, we know that $(\widetilde u_m)_{m\in\mathbb{N}}$ converges to some function
\[
u\in \Big\{  \varphi \in C^{0,\alpha}(\mathbb{R}^n)\, :\, [ \varphi ]_{W^{s,p} (\mathbb{R}^n)} < +\infty  \Big\}.
\]
We may identify $U$ with the equivalence class $\{u\}_C$, i.\,e. we define $\mathcal{J}(U) =\{u\}_C$.
\par
Observe that this is well-defined, since for any other representative $(v_m)_{m\in\mathbb{N}}$ belonging to the class $U$, we still have
\[
\lim_{m\to\infty}[v_m-u]_{W^{s,p}(\mathbb{R}^n)}\le \lim_{m\to\infty} [\widetilde u_m-v_m]_{W^{s,p}(\mathbb{R}^n)}+\lim_{m\to\infty} [\widetilde u_m-u]_{W^{s,p}(\mathbb{R}^n)}=0.
\]
Let us show that the map $\mathcal{J}$ is linear. Indeed for every $U,V\in \mathcal{D}^{s,p}(\mathbb{R}^n)$ and $\alpha,\beta\in\mathbb{R}$, we consider $\alpha\,U+\beta\,V$. By choosing $(u_m)_{m\in\mathbb{N}}$ a representative for $U$ and $(v_m)_{m\in\mathbb{N}}$ a representative for $V$, we can apply Lemma \ref{lem:homog Sobolev sp > n} to both sequences and obtain two new sequences $(\widetilde u_m)_{m\in\mathbb{N}}, (\widetilde v_m)_{m\in\mathbb{N}}$
\[
(\widetilde u_m)_{m\in\mathbb{N}}\sim_{s,p} (u_m)_{m\in\mathbb{N}}\qquad \mbox{ and }\qquad (\widetilde v_m)_{m\in\mathbb{N}}\sim_{s,p} (v_m)_{m\in\mathbb{N}},
\]
and two functions
\[
u,v\in \Big\{  \varphi \in C^{0,\alpha}(\mathbb{R}^n)\, :\, [ \varphi ]_{W^{s,p} (\mathbb{R}^n)} < +\infty  \Big\},
\]
such that we have
\[
\mathcal{J}(U)=\{u\}_C\qquad \mbox{ and } \qquad \mathcal{J}(V)=\{v\}_C.
\]
By observing that
\[
\alpha\,U +\beta\,V =\{(\alpha\,\widetilde u_m+\beta\,\widetilde v_m)\}_{s,p},
\]
and using that
\[
\lim_{m\to\infty} [(\alpha\,\widetilde u_m+\beta\,\widetilde v_m)-(\alpha\,u+\beta\,v)]_{W^{s,p}(\mathbb{R}^n)}=0,
\]
we get
\[
\mathcal{J}(\alpha\,U+\beta\,V)=\{\alpha\,u+\beta\,v\}_C=\alpha\,\{u\}_C+\beta\,\{v\}_C=\alpha\,\mathcal{J}(U)+\beta\,\mathcal{J}(V),
\]
as desired.
\par
Moreover, by construction we have
\[
\|\mathcal{J}(U)\|_{\dot W^{s,p}(\mathbb{R}^n)}=\|\{u\}_C\|_{\dot W^{s,p}(\mathbb{R}^n)}=[u]_{W^{s,p}(\mathbb{R}^n)}=\lim_{m\to\infty} [\widetilde u_m]_{W^{s,p}(\mathbb{R}^n)}=\|U\|_{\mathcal{D}^{s,p}(\mathbb{R}^n)},
\]
which implies that this is an isometry.
\par
We still have to show that $\mathcal{J}$ is surjective. For every equivalence class $\{v\}_C\in \dot W^{s,p}(\mathbb{R}^n)$, we may select the representative $v$ in such a way that
\[
v(0)=0.
\]
Then we can construct a sequence $(v_m)_{m\in\mathbb{N}}\subset C^\infty_c(\mathbb{R}^n)$ such that
\begin{equation}
\label{approssima2}
\lim_{m\to\infty} [v_m-v]_{W^{s,p}(\mathbb{R}^n)}=0.
\end{equation}
In order to do this, we can repeat the construction of {\bf Part 1} in the proof of Theorem \ref{thm:sp < n}, up to some modifications that we are going to detail. More precisely, we introduce  a sequence of cut-off functions
$ \eta_j\in C^\infty_c (\mathbb R^n)$ with $\mathrm{supp\,} \eta_j \subset B_{2j}(0)$
such that
\[
0\le \eta_j\le 1,\qquad \eta_j\equiv 1 \mbox{ on } B_j(0),\qquad |\nabla \eta_j|\le \frac{C}{j},
\]
and observe that by Lemma \ref{lm:truncation2}, for every $m\ge1$
\[
[(u\ast\rho_m)\,\eta_j-u\ast\rho_m]_{W^{s,p}(\mathbb{R}^n)}\le C,
\]
for a constant $C>0$ independent of $j$. This shows that the sequence
\begin{equation}
\label{mah!}
\left(\frac{(u\ast\rho_m(x))\,(\eta_j(x)-1)-(u\ast\rho_m(y))\,(\eta_j(y)-1)}{|x-y|^{\frac{n}{p}+s}}\right)_{j\in\mathbb{N}}\subset L^p(\mathbb{R}^n\times\mathbb{R}^n),
\end{equation}
weakly converges, up to a subsequence. The weak limit is given by the null function, since $1-\eta_j$ converges to $0$, locally uniformly. In order to upgrade this convergence, we can apply Mazur's Lemma to infer that there exists a new sequence made of convex combinations of \eqref{mah!}, which converges strongly to $0$ in $L^p(\mathbb{R}^n\times\mathbb{R}^n)$.
\par
Thanks to the form of the sequence \eqref{mah!}, this finally implies that there exists $(\widetilde\eta_j)_{j\in\mathbb{N}}\subset C^\infty_c(\mathbb{R}^n)$, such that
\[
\lim_{j\to\infty} \left\|\frac{(u\ast\rho_m(x))\,(\widetilde\eta_j(x)-1)-(u\ast\rho_m(y))\,(\widetilde\eta_j(y)-1)}{|x-y|^{\frac{n}{p}+s}}\right\|_{L^p(\mathbb{R}^n\times\mathbb{R}^n)}=0.
\]
in other words, we have
\[
\lim_{j\to\infty} [(u\ast\rho_m)\,\widetilde\eta_j-u\ast\rho_m]_{W^{s,p}(\mathbb{R}^n)}=0.
\]
Thus, for every $m\ge 1$ we can choose $j_m\in\mathbb{N}$ such that
\[
[(u\ast\rho_m)\,\widetilde\eta_{j_m}-u\ast\rho_m]_{W^{s,p}(\mathbb{R}^n)}\le \frac{1}{m}.
\]
If we now define the sequence $(v_m)_{m\in\mathbb{N}}\subset C^\infty_c(\mathbb{R}^n)$ by
\[
v_m=(u\ast\rho_m)\,\widetilde\eta_{j_m},
\]
it is easy to see that this verifies \eqref{approssima2}, thanks to the choice of $j_m$ and Lemma \ref{lm:cazzata}.
\par
Thus we can identify $\{v\}_C$ with the equivalence class $\{(v_m)_{m\in\mathbb{N}}\}_{s,p}$. In other words, this proves the surjectivity of $\mathcal{J}$. The proof is over.
\end{proof}

 \begin{remark} We recall that we indicate
\[
C^{0,\alpha}(\mathbb{R}^n)=\left\{u:\mathbb{R}^n\to\mathbb{R}\,:\, \sup_{x\not=y}\frac{|u(x)-u(y)|}{|x-y|^\alpha} < +\infty\right\},
\]
thus the functions belonging to this space are not necessarily bounded.
\end{remark}

\section{Characterisation for $s\,p = n$}
\label{sec:5}

\subsection{General case}\label{sec.spn}

We start with the corresponding version Lemma \ref{lem:homogeneous sobolev counter} for the case $s\,p=n$. The construction now is slightly more complicated, in particular it cannot be an easy consequence of scalings,  as we have explained in the Introduction. Hence, a careful choice of auxiliary functions is needed.
\begin{lemma}
	\label{lem:confo}
	Let $s \in (0,1]$ and $n \ge 1$ be such that $s<n$. There exists a sequence $(\varphi_m)_{m\in\mathbb{N}} \subset C_c^\infty (\mathbb{R}^n)$ such that
	\begin{equation*}
	[ \varphi_m ] _{W^{s,\frac{n}{s}} (\mathbb{R}^n)}\le C\, \left(\frac{1}{\log m}\right)^{1-\frac{s}{n}}\to 0
	\qquad \text{ and } \qquad
	\varphi_m \to 1 \text{  uniformly over compact sets},
	\end{equation*}
	as $m\to +\infty$. Hence, $(\varphi_m)_{m\in\mathbb{N}}$ is equivalent in $\mathcal{D}^{s,\frac{n}{s}} (\mathbb{R}^n)$ to zero, although its pointwise limit is $1$.
\end{lemma}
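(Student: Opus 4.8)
The plan is to take $\varphi_m$ to be a \emph{truncated and rescaled logarithm}. Fix $\phi\in C^\infty(\mathbb{R})$ with $0\le\phi\le1$, $\phi\equiv1$ on $(-\infty,0]$ and $\phi\equiv0$ on $[1,+\infty)$, and set
\[
\varphi_m(x)\vcentcolon=\phi\!\left(\frac{\ln|x|}{\ln m}\right),\qquad x\in\mathbb{R}^n,\ m\ge 2.
\]
Since $\varphi_m\equiv1$ on $B_1$ and $\varphi_m\equiv0$ outside $B_m$, we have $\varphi_m\in C^\infty_c(\mathbb{R}^n)$, and $\varphi_m\to1$ uniformly on compact sets because $\ln|x|/\ln m\to0$ uniformly on bounded sets. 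This is the natural replacement for the rescaled cut-offs of Lemma~\ref{lem:homogeneous sobolev counter}: in the conformal regime the seminorm is scale invariant, so a pure dilation gains nothing and the smallness must instead be extracted from the slow (logarithmic) variation of $\varphi_m$. The remaining task is the estimate $[\varphi_m]_{W^{s,n/s}(\mathbb{R}^n)}\le C\,(\ln m)^{-(1-s/n)}$.

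If $s=1$ (hence $p=n\ge2$, since $s<n$), this is immediate from the convention $[\,\cdot\,]_{W^{1,n}}=\|\nabla\,\cdot\,\|_{L^n}$: as $|\nabla\varphi_m(x)|\le \|\phi'\|_\infty\,(|x|\,\ln m)^{-1}\,\mathbf 1_{\{1\le|x|\le m\}}$, we get $\|\nabla\varphi_m\|_{L^n}^n\le C(\ln m)^{-n}\int_{1\le|x|\le m}|x|^{-n}\,dx=C'(\ln m)^{1-n}$, which is the claimed rate. So from now on assume $s\in(0,1)$, write $p=n/s$ and $L=\ln m$, so that $s\,p=n$ and $n+s\,p=2n$. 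Since $\phi$ is Lipschitz with values in $[0,1]$,
\[
|\varphi_m(x)-\varphi_m(y)|\le \min\!\Big\{1,\ \tfrac{\|\phi'\|_\infty}{L}\,\big|\ln|x|-\ln|y|\big|\Big\}\qquad\text{for all }x,y\in\mathbb{R}^n,
\]
and the left-hand side vanishes whenever $|x|,|y|<1$ or $|x|,|y|\ge m$. Hence it suffices to prove $I_m\vcentcolon=\iint_{\mathbb{R}^n\times\mathbb{R}^n}|\varphi_m(x)-\varphi_m(y)|^p\,|x-y|^{-2n}\,dx\,dy\le C\,L^{1-p}$, because then $[\varphi_m]_{W^{s,p}}=(s(1-s)\,I_m)^{1/p}\le C'\,L^{-(p-1)/p}=C'\,L^{-(1-s/n)}$, using $1/p=s/n$.

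To bound $I_m$ I would decompose $\mathbb{R}^n=\bigcup_{j\in\mathbb{Z}}A_j$ into dyadic annuli $A_j=\{2^j\le|x|<2^{j+1}\}$ and estimate the blocks $A_j\times A_k$. For $x\in A_j$, $y\in A_k$ one has $\big|\ln|x|-\ln|y|\big|\le(|j-k|+1)\ln2$; moreover $|x-y|\gtrsim 2^{\max\{j,k\}}$ if $|j-k|\ge2$, while for $|j-k|\le1$ one uses instead $\big|\ln|x|-\ln|y|\big|\le |x-y|\,2^{-\min\{j,k\}}$ together with the convergence of $\int_0^{2^j}r^{p-n-1}\,dr$ (here $p>n$, which holds since $s<1$). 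A routine computation then yields, for all $j,k$,
\[
\iint_{A_j\times A_k}\frac{|\varphi_m(x)-\varphi_m(y)|^p}{|x-y|^{2n}}\,dx\,dy\ \le\ C\,\min\!\Big\{1,\ \Big(\tfrac{|j-k|+1}{L}\Big)^{p}\Big\}\,2^{-n|j-k|}.
\]
Since this block vanishes unless $\max\{j,k\}\ge0$ and $\min\{j,k\}\le\log_2 m$, for each $d\ge0$ there are only $O(\log_2 m+d)$ relevant pairs with $|j-k|=d$, whence
\[
I_m\ \lesssim\ \sum_{d\ge0}\big(\log_2 m+d\big)\,\Big(\tfrac{d+1}{L}\Big)^{p}\,2^{-nd}\ \lesssim\ \frac{\log_2 m}{L^{p}}\ \asymp\ L^{1-p},
\]
the dominant contribution coming from the $O(\log m)$ near-diagonal blocks, each of size $O(L^{-p})$.

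The only genuinely delicate point is this counting step: applying the triangle inequality to a dyadic decomposition of $\varphi_m$ loses everything, because each dyadic piece is scale invariant and hence has non-vanishing seminorm, so the gain must come from the interplay between the $L^{-1}$-small slope of $\varphi_m$ and the \emph{single} logarithm produced by $\int_1^m r^{\,n-1-s\,p}\,\frac{dr}{r}=\int_1^m\frac{dr}{r}=\ln m$ (here the identity $s\,p=n$ is essential). The resulting logarithmic — rather than power-type — decay $L^{1-p}=(\ln m)^{1-p}\to0$ is the analytic signature of the conformal case, and arranging the annular bookkeeping so that it surfaces cleanly is the main obstacle; the rest of the argument is elementary.
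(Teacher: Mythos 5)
Your proof is correct, and your construction is the same in spirit as the paper's — a truncated, logarithmically rescaled cut-off — but the route to the seminorm estimate is genuinely different. The paper takes the piecewise-logarithmic (hence only Lipschitz) function $\psi_m$ from Deny--Lions and Maz'ya, observes by scaling that $[\psi_m]_{W^{s,n/s}} = |\log m|^{(s-n)/n}[u_m]_{W^{s,n/s}}$ for a normalized companion sequence $(u_m)$, and then \emph{cites} Parini--Ruf \cite[Proposition 5.1]{Parini2019} for the fact that $[u_m]_{W^{s,n/s}}$ stays bounded; smoothness is afterwards restored by mollification. You instead take the already-smooth $\varphi_m=\phi(\ln|x|/\ln m)$ (so no mollification step is needed), and you prove the bound $I_m\lesssim(\ln m)^{1-p}$ from scratch by a dyadic-annulus decomposition. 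The annular bookkeeping is the correct and essentially only way to make the argument self-contained: the pointwise slope bound $|\varphi_m(x)-\varphi_m(y)|\lesssim L^{-1}\bigl|\ln|x|-\ln|y|\bigr|$ gives each near-diagonal block a contribution $\lesssim L^{-p}$ (here scale invariance $s\,p=n$ is what makes each block $O(1)$ before the $L^{-p}$ factor), and the transition region spans $O(\log_2 m)\asymp L$ dyadic shells, which multiplied together produce exactly $L^{1-p}$; the off-diagonal blocks contribute a geometrically convergent correction because of the $2^{-n|j-k|}$ factor. I checked the block estimates in both regimes $|j-k|\le1$ and $|j-k|\ge2$, the counting of relevant index pairs (only $\max\{j,k\}\ge0$ and $\min\{j,k\}\le\log_2 m$ contribute), the final summation, and the separate $s=1$ computation, and they are all correct. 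What you gain is a fully elementary, self-contained proof with an explicit rate; what you lose, compared to the paper, is the short-cut of invoking an already-proved Moser--Trudinger sequence estimate. Either way the exponent $1-s/n$ emerges identically.
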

\begin{proof}
	As in \cite[page 319]{DL} and \cite[Lemma 15.2.2]{maz}, we take the sequence
	\begin{equation}
	\label{eq:null sequence s in (0,1) and sp = n}
	\psi_m (x)  = \left\{\begin{array}{ll}
	1, & \text{ if } |x| < m, \\
	&\\
	\dfrac{1}{\log m} \log \dfrac{m^2}{|x|}, & \text{ if } m\le |x|\le m^2, \\
	&\\
	0, & \text{ if } |x| > m^2.
	\end{array}
	\right.
	\end{equation}
	For $s=1$, it can be checked by direct computation that
	\[
	\lim_{m\to\infty} \| \nabla \psi_m \|_{L^n(\mathbb{R}^n)}=0.
	\]
	For the fractional case $s\in(0,1)$, we claim that we still have
	\[
	\lim_{m\to\infty} [\psi_m]_{W^{s,\frac{n}{s}}(\mathbb{R}^n)}=0,
	\]
	but the direct computation is fairly more intricate. We introduce the sequence
\[
u_m(x)=\left\{\begin{array}{ll}
	\left|\log \dfrac{1}{m}\right|^\frac{n-s}{n}, & \text{ if } |x| < \dfrac{1}{m}, \\
	&\\
	\left|\log \dfrac{1}{m}\right|^{-\frac{s}{n}}\,\log |x|, & \text{ if } \dfrac{1}{m}\le |x|\le 1, \\
	&\\
	0, & \text{ if } |x| > 1,
	\end{array}
	\right.	
\]
and observe that this is related to $\psi_m$ through the relation
\[
\psi_m(x)=\left|\log \dfrac{1}{m}\right|^\frac{s-n}{n}\,u_m\left(\frac{x}{m^2}\right).
\]
By using that the Gagliardo-Slobodecki\u{\i} seminorm is now scale invariant, we thus get
\[
[ \psi_m ] _{W^{s,\frac{n}{s}} (\mathbb{R}^n)}=\left|\log \dfrac{1}{m}\right|^\frac{s-n}{n}\, [u_m ] _{W^{s,\frac{n}{s}} (\mathbb{R}^n)}.
\]
We now recall that from \cite[Proposition 5.1]{Parini2019} we have
\[
\lim_{m\to\infty} [u_m ] _{W^{s,\frac{n}{s}} (\mathbb{R}^n)}=\gamma_{n,s}<+\infty.
\]	
We finally get that
	\[
	\lim_{m\to\infty} [\psi_m]_{W^{s,\frac{n}{s}}(\mathbb{R}^n)}\le \lim_{m\to\infty} C\,\left(\frac{1}{\log m}\right)^{1-\frac{s}{n}}=0,
	\]
	as claimed.
	\par
	Observe that technically speaking such a sequence $(\psi_m)_{m\in\mathbb{N}}$ does not belong to $C^\infty_c (\mathbb{R}^n)$. However, this is a minor issue, that can be easily sorted by convolution. We take $\rho\in C^\infty_0(\mathbb{R}^n)$ a standard Friedrichs mollifier supported on $B_1(0)$, then by defining
	\[
	\varphi_m=\psi_m\ast \rho,
	\]
	we get the desired conclusion, thanks to the properties of convolutions.
\end{proof}
The next technical result is the counterpart of Lemma \ref{lem:homog Sobolev sp > n} for the case $s\,p=n$. In particular, the space of H\"older functions now has to be replaced by a suitable Campanato space.

\begin{lemma}
	\label{lem:homog Sobolev sp = n}
	Let $s \in (0,1]$ and $n\ge 1$ be such that $s<n$. Let $(u_m)_{m\in\mathbb{N}} \subset C_c^\infty (\mathbb{R}^n)$ be Cauchy sequence with respect to $[\,\cdot\,]_{W^{s,n/s} (\mathbb{R}^n)}$. Then there exists another sequence $(\widetilde u_m)_{m\in\mathbb{N}}\subset C_c^\infty (\mathbb{R}^n)$ such that:
	\begin{itemize}
		\item we have	
		\[
		\lim_{m\to\infty}[u_m - \widetilde u_m]_{W^{s,\frac{n}{s}} (\mathbb{R}^n)}=0,
		\]
		\item $\widetilde u_m$ converges in $L^\frac{n}{s}_{\rm loc}(\mathbb{R}^n)$ to a function $u\in\mathcal{L}^{\frac{n}{s},n}(\mathbb{R}^n)$.
	\end{itemize}
	Moreover, this function $u$ is such that
	\[
	\int_{B_1(0)} u\,dx=0,
	\]
	and we have
	\begin{equation}
	\label{eq:estimates v_m conf}
	\lim_{m\to\infty}[\widetilde u_m - u]_{W^{s,\frac{n}{s}} (\mathbb{R}^n)}= 0, \qquad 	[u]_{W^{s,\frac{n}{s}} (\mathbb{R}^n)} = \lim_{m\to\infty} [u_m]_{W^{s,\frac{n}{s}} (\mathbb{R}^n)},
	\end{equation}
	\begin{equation}
	\label{eq:homogeneous Sobolev Morrey conf}
	[u]_{\mathcal{L}^{\frac{n}{s},n}(\mathbb{R}^n)} \le C\, [u]_{W^{s,\frac{n}{s}} (\mathbb{R}^n)}.
	\end{equation}
\end{lemma}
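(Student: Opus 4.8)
The plan is to mimic the proof of Lemma \ref{lem:homog Sobolev sp > n}, replacing the counterexample sequence from Lemma \ref{lem:homogeneous sobolev counter} with the one from Lemma \ref{lem:confo}, the Morrey inequality \eqref{eq:Morrey} with the Campanato estimate \eqref{eq:homogeneous Sobolev Morrey conf} (i.e.\ Theorem \ref{teo:campanatoholder} with $\lambda = sp = n$), and the normalisation $u(0)=0$ with the integral normalisation $\int_{B_1(0)} u\,dx = 0$; the weighted integrability Lemma \ref{lm:FeSt} then plays the role that pointwise Hölder bounds played in the superconformal case, in order to upgrade local convergence to something with a unique limit.

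First I would set $v_m = u_m - (u_m)_{0,1}$, where $(u_m)_{0,1}$ denotes the average of $u_m$ over $B_1(0)$; this does not change the seminorm, so $(v_m)_{m\in\mathbb{N}}$ is still Cauchy in $[\,\cdot\,]_{W^{s,n/s}(\mathbb{R}^n)}$, and each $v_m$ has zero average on $B_1(0)$. The difficulty is that $v_m$ need not have compact support, so it is not admissible as the final sequence; to fix this I would take $M_m \to +\infty$ suitably fast and set $\widetilde u_m = v_m - (v_m)_{0,1}\,\varphi_{M_m} = v_m - 0 = v_m$... more carefully: since $(v_m)_{0,1} = 0$ the naive truncation argument of Lemma \ref{lem:homog Sobolev sp > n} gives nothing, so instead I would let $\widetilde u_m = u_m - (u_m)_{0,1}\,\varphi_{M_m}$ directly from the original $u_m$, choosing
\[
M_m \ge \exp\!\Big(\big(m\,|(u_m)_{0,1}|\big)^{\frac{n}{n-s}}\Big),
\]
so that by Lemma \ref{lem:confo},
\[
[u_m - \widetilde u_m]_{W^{s,\frac{n}{s}}(\mathbb{R}^n)} = |(u_m)_{0,1}|\,[\varphi_{M_m}]_{W^{s,\frac{n}{s}}(\mathbb{R}^n)} \le C\,|(u_m)_{0,1}|\,\Big(\frac{1}{\log M_m}\Big)^{1-\frac{s}{n}} \le \frac{C}{m} \to 0.
\]
Each $\widetilde u_m$ is in $C_c^\infty(\mathbb{R}^n)$, is Cauchy for the seminorm, and — since $\varphi_{M_m}\equiv 1$ on $B_{M_m}\supset B_1$ for $m$ large — satisfies $\int_{B_1(0)} \widetilde u_m\,dx = 0$.

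Next I would produce the limit function. By the Poincaré–Wirtinger inequality of Corollary \ref{coro:poincare} (equivalently \eqref{PW} with $sp = n$), on any ball $B_R(0)$ one has $\|\widetilde u_m - \widetilde u_k\|_{L^{n/s}(B_R(0))} \le C R^s \,[\widetilde u_m - \widetilde u_k]_{W^{s,n/s}(\mathbb{R}^n)} + C R^{n s/n}|(\widetilde u_m - \widetilde u_k)_{0,R}|$, and controlling the average difference via the zero-average-on-$B_1$ normalisation (using Jensen and \eqref{PWbis} to pass from $B_1$ to $B_R$) shows $(\widetilde u_m)_{m\in\mathbb{N}}$ is Cauchy in $L^{n/s}(B_R(0))$ for every $R$; hence it converges in $L^{n/s}_{\rm loc}(\mathbb{R}^n)$ to some $u$, which inherits $\int_{B_1(0)} u\,dx = 0$. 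Then, exactly as in {\bf Part 3} of the proof of Theorem \ref{thm:sp < n}, using that the difference quotients form a bounded sequence in $L^{n/s}(\mathbb{R}^n\times\mathbb{R}^n)$ together with Fatou and the Banach-space completeness of $L^{n/s}(\mathbb{R}^n\times\mathbb{R}^n)$, I get $[u]_{W^{s,n/s}(\mathbb{R}^n)} = \lim_m [u_m]_{W^{s,n/s}(\mathbb{R}^n)}$ and $[\widetilde u_m - u]_{W^{s,n/s}(\mathbb{R}^n)}\to 0$, which is \eqref{eq:estimates v_m conf}. Finally \eqref{eq:homogeneous Sobolev Morrey conf} follows by applying Theorem \ref{teo:campanatoholder} (with $\lambda = sp = n$) to each $\widetilde u_m \in C_c^\infty(\mathbb{R}^n)$ and passing to the limit, using that $[\,\cdot\,]_{\mathcal{L}^{n/s,n}}$ is lower semicontinuous under $L^{n/s}_{\rm loc}$ convergence (the defining supremum of averages passes to the limit by Fatou on each ball).

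The main obstacle, compared with the superconformal case, is that there is no pointwise control: convergence is only in $L^{n/s}_{\rm loc}$, so one must be careful that the candidate limit $u$ is genuinely well-defined as a function (not merely modulo constants at this stage) and that the normalisation $\int_{B_1(0)}u = 0$ is preserved in the limit — this is exactly what pins down $u$ uniquely and what makes Lemma \ref{lm:FeSt} available later for the global identification in Theorem \ref{thm:sp = n}. The technical heart is therefore the chain of Poincaré–Wirtinger estimates (Corollary \ref{coro:poincare} and Remark \ref{rem:pessetto}) used to control $\|\widetilde u_m\|_{L^{n/s}(B_R)}$ uniformly in $m$ by the seminorm plus the fixed normalisation, which requires tracking how the average over $B_R$ grows with $R$ — logarithmically, as \eqref{PWbis} and the proof of Lemma \ref{lm:FeSt} suggest — and checking this growth is harmless on compact sets.
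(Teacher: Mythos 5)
Your proposal is correct and follows essentially the same route as the paper: you subtract $(u_m)_{0,1}\,\varphi_{M_m}$ with $M_m$ chosen so that the logarithmic decay from Lemma~\ref{lem:confo} beats the size of the averages, obtain zero mean on $B_1(0)$, pass to an $L^{n/s}_{\rm loc}$ limit via the Poincar\'e--Wirtinger estimates of Corollary~\ref{coro:poincare} and Remark~\ref{rem:pessetto}, and transfer the seminorm and Campanato bounds to the limit by Fatou and Theorem~\ref{teo:campanatoholder}. (Only a small slip: the Cauchy/Fatou/completeness argument you invoke is Part~2, not Part~3, of the proof of Theorem~\ref{thm:sp < n}.)
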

\begin{proof}
	The proof is similar to that of Lemma \ref{lem:homog Sobolev sp > n}. For every $m\in\mathbb{N}$, we choose a natural number $M_m\ge m$ large enough, so that
	\[
	\left|\int_{B_1(0)} u_m\,dx\right|\,\left(\frac{1}{\log M_m}\right)^{1-\frac{s}{n}}\le \frac{1}{m},
	\]
	and consider the sequence $(\varphi_m)_{m\in\mathbb{N}}$ given by Lemma \ref{lem:confo}. By construction we have that $M_m$ diverges to $+\infty$, as $m$ goes to $\infty$.
	Then we define
	\begin{equation*}
	\widetilde u_m (x) = u_m (x) - \frac{1}{|B_1(0)|}\,\left(\int_{B_1(0)} u_m\,dx\right) \varphi_{M_m} (x),\qquad \mbox{ for } m\in\mathbb{N}.
	\end{equation*}
	It is not difficult to see that
	\begin{equation}
	\label{vab}
	\int_{B_1(0)} \widetilde u_m\,dx=0.
	\end{equation}
	We now show that $\widetilde u_m$ has the claimed properties. Thanks to the choice of $M_m$ and Lemma \ref{lem:confo}, we have
	\[
	\begin{split}
	[u_m - \widetilde u_m]_{W^{s,\frac{n}{s}} (\mathbb{R}^n)} = \frac{1}{|B_1(0)|}\,\left|\int_{B_1(0)} u_m\,dx\right|\, [\varphi_{M_m}]_{W^{s,\frac{n}{s}} (\mathbb{R}^n)}&\le C\,\left|\int_{B_1(0)} u_m\,dx\right|\,\left(\frac{1}{\log M_m}\right)^{1-\frac{s}{n}}\le \frac{C}{m},
	\end{split}
	\]
	which implies that
	\[
	\lim_{m\to\infty}[u_m - \widetilde u_m]_{W^{s,\frac{n}{s}} (\mathbb{R}^n)}=0,
	\]
	as desired. In particular, we get that $(\widetilde u_m)_{m\in\mathbb{N}} \subset C_c^\infty (\mathbb{R}^n)$ is still a Cauchy sequence with respect to the seminorm $[\,\cdot\,]_{W^{s,n/s} (\mathbb{R}^n)}$.
	\par
	In order to infer the claimed convergence, we can apply Remark \ref{rem:pessetto} with $x_0=0$ and $r=1$, so to get that $(\widetilde u_m)_{m\in\mathbb{N}}$ is a Cauchy sequence in $L^{n/s}(B_R)$, for every $R\ge 1$. Thus we get that there exists $u\in L^{n/s}_{\rm loc}(\mathbb{R}^n)$ such that
	\[
	\lim_{m\to\infty} \|\widetilde u_m-u\|_{L^\frac{n}{s}(B_R)}=0,\qquad \mbox{ for every } R\ge 1.
	\]
	In particular, the strong convergence entails that
	\[
	\int_{B_1(0)} u\,dx=0,\qquad u\in \mathcal{L}^{\frac{n}{s},n}(\mathbb{R}^n)\qquad \mbox{ and }\qquad [u]_{W^{s,\frac{n}{s}}(\mathbb{R}^n)}<+\infty.
	\]
	The first property is straightforward, by recalling \eqref{vab}. The fact that $u\in \mathcal{L}^{\frac{n}{s},n}(\mathbb{R}^n)$ follows from
	\[
	\begin{split}
	\int_{B_\varrho(x_0)} |u-u_{x_0,\varrho}|^\frac{n}{s}\,dx=\lim_{m\to\infty} \int_{B_\varrho(x_0)} |\widetilde{u}_m-(\widetilde{u}_m)_{x_0,\varrho}|^\frac{n}{s}\,dx&\le  \varrho^n\, \lim_{m\to\infty} [\widetilde u_m]^\frac{n}{s}_{\mathcal{L}^{\frac{n}{s},n}(\mathbb{R}^n)}\\
	&\le C\, \varrho^n\,\lim_{m\to\infty} [\widetilde u_m]^\frac{n}{s}_{W^{s,\frac{n}{s}}(\mathbb{R}^n)},
	\end{split}
	\]
	where we used Theorem \ref{teo:campanatoholder}. This permits to infer that
	\[
	\sup_{x_0\in\mathbb{R}^n, \varrho>0}\varrho^{-n}\,\int_{B_\varrho(x_0)} |u-u_{x_0,\varrho}|^\frac{n}{s}\,dx<+\infty,
	\]
	as claimed. Finally, the fact that
	\[
	[u]_{W^{s,\frac{n}{s}}(\mathbb{R}^n)}<+\infty,
	\]
	follows from the lower semicontinuity of the Gagliardo-Slobodecki\u{\i} seminorm with respect to the strong $L^{n/s}$ convergence, which in turn follows from Fatou's Lemma.
	\par
	By using that $(\widetilde u_m)_{m\in\mathbb{N}} \subset C_c^\infty (\mathbb{R}^n)$ is still a Cauchy sequence with respect to $[\,\cdot\,]_{W^{s,p} (\mathbb{R}^n)}$ and arguing as in {\bf Part 2} of the proof of Theorem \ref{thm:sp < n}, we deduce \eqref{eq:estimates v_m conf}. 	
	The estimate \eqref{eq:homogeneous Sobolev Morrey conf} can then be obtained by passing to the limit in the inequality of Theorem \ref{teo:campanatoholder}.
	\end{proof}

\begin{theorem}
	\label{thm:sp = n}
	Let $s \in (0,1]$ and $n \ge 1$ be such that $s<n$.
	We define the quotient space
		\[
	\dot W^{s, \frac n s } (\mathbb{R}^n)\vcentcolon=  \frac{\left \{  v \in \mathcal{L}^{\frac{n}{s},n} (\mathbb{R}^n)\, :\, [v]_{W^{s,\frac n s } (\mathbb{R}^n)} < +\infty  \right \}}{\sim_C},
	\]
	where $\sim_C$ is the same equivalence relation as in \eqref{eq:equivalence relation up to constant}.
	We still endow this space with the norm
	\[
	\left\|\{u\}_C \right\|_{\dot W^{s,\frac n s } (\mathbb{R}^n)} = [u]_{W^{s,\frac n s } (\mathbb{R}^n)},\qquad \mbox{ for every }u\in \mathcal{L}^{\frac{n}{s},n}(\mathbb{R}^n) \mbox{ such that }[ u ]_{W^{s,\frac{n}{s}} (\mathbb{R}^n)} < +\infty.
	\]
	Then $\dot W^{s,\frac n s } (\mathbb{R}^n) $ is a Banach space and there exists a linear isometric isomorphism
	\begin{equation*}
	\mathcal{J}: \mathcal{D}^{s,\frac n s } (\mathbb{R}^n) \to	\dot W^{s,\frac n s  } (\mathbb{R}^n).
	\end{equation*}	
	In other words, the space $\mathcal{D}^{s,\frac{n}{s}}(\mathbb{R}^n)$ can be identified with $\dot W^{s,\frac{n}{s}}(\mathbb{R}^n)$.	
\end{theorem}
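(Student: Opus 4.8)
The plan is to follow, almost verbatim, the structure of the proof of Theorem~\ref{thm:sp > n}, with three systematic replacements: the H\"older space $C^{0,\alpha}$ is replaced by the Campanato space $\mathcal{L}^{\frac{n}{s},n}(\mathbb{R}^n)$, the scaling null-sequence of Lemma~\ref{lem:homogeneous sobolev counter} is replaced by the logarithmic null-sequence of Lemma~\ref{lem:confo}, and the fractional Morrey inequality is replaced by the Morrey--Campanato inequality of Theorem~\ref{teo:campanatoholder}, the latter two being already packaged inside the structural Lemma~\ref{lem:homog Sobolev sp = n}. \textbf{Step 1 (normed space; construction of $\mathcal{J}$).} First I observe that $\dot W^{s,\frac{n}{s}}(\mathbb{R}^n)$ is a normed vector space, since the proposed norm vanishes exactly on the class $\{0\}_C$, which contains every constant. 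Then, given $U=\{(u_m)_{m\in\mathbb{N}}\}_{s,p}\in\mathcal{D}^{s,\frac{n}{s}}(\mathbb{R}^n)$, I apply Lemma~\ref{lem:homog Sobolev sp = n} to a representative $(u_m)_{m\in\mathbb{N}}$, obtaining a sequence $(\widetilde u_m)_{m\in\mathbb{N}}\subset C_c^\infty(\mathbb{R}^n)$ with $(\widetilde u_m)_{m\in\mathbb{N}}\sim_{s,p}(u_m)_{m\in\mathbb{N}}$, converging in $L^{\frac{n}{s}}_{\rm loc}(\mathbb{R}^n)$ to some $u\in\mathcal{L}^{\frac{n}{s},n}(\mathbb{R}^n)$ with $[u]_{W^{s,\frac{n}{s}}(\mathbb{R}^n)}<+\infty$ and $\int_{B_1(0)}u\,dx=0$, and I set $\mathcal{J}(U)=\{u\}_C$.

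\textbf{Step 2 (well-definedness, linearity, isometry, completeness).} For well-definedness I take another representative $(v_m)_{m\in\mathbb{N}}$ of $U$, apply Lemma~\ref{lem:homog Sobolev sp = n} to it to get $(\widetilde v_m)_{m\in\mathbb{N}}\to v$ in $L^{\frac{n}{s}}_{\rm loc}(\mathbb{R}^n)$, and note $[\widetilde u_m-\widetilde v_m]_{W^{s,\frac{n}{s}}(\mathbb{R}^n)}\to 0$; by Theorem~\ref{teo:campanatoholder} this gives $[\widetilde u_m-\widetilde v_m]_{\mathcal{L}^{\frac{n}{s},n}(\mathbb{R}^n)}\to 0$, and lower semicontinuity of the Campanato seminorm under $L^{\frac{n}{s}}_{\rm loc}$ convergence (Fatou on each ball) yields $[u-v]_{\mathcal{L}^{\frac{n}{s},n}(\mathbb{R}^n)}=0$, so $u-v$ is constant a.e.\ and $\{u\}_C=\{v\}_C$. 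Linearity is checked exactly as in Theorem~\ref{thm:sp > n}, applying Lemma~\ref{lem:homog Sobolev sp = n} to representatives of two classes and using that the modified sequences converge in seminorm to the respective limits. The isometry identity $\|\mathcal{J}(U)\|_{\dot W^{s,\frac{n}{s}}(\mathbb{R}^n)}=[u]_{W^{s,\frac{n}{s}}(\mathbb{R}^n)}=\lim_m[\widetilde u_m]_{W^{s,\frac{n}{s}}(\mathbb{R}^n)}=\lim_m[u_m]_{W^{s,\frac{n}{s}}(\mathbb{R}^n)}=\|U\|_{\mathcal{D}^{s,\frac{n}{s}}(\mathbb{R}^n)}$ is immediate from \eqref{eq:estimates v_m conf}, and completeness of $\dot W^{s,\frac{n}{s}}(\mathbb{R}^n)$ will follow from the isomorphism with the complete space $\mathcal{D}^{s,\frac{n}{s}}(\mathbb{R}^n)$.

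\textbf{Step 3 (surjectivity).} Given a class $\{v\}_C\in\dot W^{s,\frac{n}{s}}(\mathbb{R}^n)$, I pick the representative $v$ normalized by $\int_{B_1(0)}v\,dx=0$ and must construct $(v_m)_{m\in\mathbb{N}}\subset C_c^\infty(\mathbb{R}^n)$ with $[v_m-v]_{W^{s,\frac{n}{s}}(\mathbb{R}^n)}\to 0$, which then gives $\mathcal{J}(\{(v_m)_{m\in\mathbb{N}}\}_{s,p})=\{v\}_C$. Here Lemma~\ref{lm:FeSt} is the key input: it provides the weighted global integrability of $v$, hence $v\in L^1_{\rm loc}(\mathbb{R}^n)$ with only logarithmic-type growth, so the mollifications $v\ast\rho_m$ are well defined and, by the convolution lemma of Appendix~\ref{sec:convolution} (Lemma~\ref{lm:cazzata}), $[v\ast\rho_m-v]_{W^{s,\frac{n}{s}}(\mathbb{R}^n)}\to 0$. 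The cut-off step is handled as in Theorem~\ref{thm:sp > n}: a direct truncation $(v\ast\rho_m)\,\eta_j$ yields only a \emph{bounded}, not vanishing, error by the truncation lemmas of Appendix~\ref{sec:6}, so I invoke Mazur's Lemma to extract convex combinations $\widetilde\eta_j$ with $[(v\ast\rho_m)\,\widetilde\eta_j-v\ast\rho_m]_{W^{s,\frac{n}{s}}(\mathbb{R}^n)}\to 0$, and then diagonalize, setting $v_m=(v\ast\rho_m)\,\widetilde\eta_{j_m}$.

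\textbf{Main obstacle.} Everything outside Step~3 is a transcription of the superconformal argument. The genuinely delicate point is the surjectivity: unlike in the case $s\,p>n$, the limit function $u$ is only $L^{\frac{n}{s}}_{\rm loc}$ with logarithmic growth, so making sense of $u\ast\rho_m$ and of the truncation estimates requires the sharp weighted integrability of Lemma~\ref{lm:FeSt}; this, together with the Mazur's Lemma device needed because cut-offs do not produce a vanishing seminorm error in the borderline regime, is where all the care must be concentrated.
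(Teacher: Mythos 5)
Your Steps~1 and~2 match the paper's argument and its reliance on Lemma~\ref{lem:homog Sobolev sp = n} and Theorem~\ref{teo:campanatoholder}; the extra detail you add on well-definedness via lower semicontinuity of the Campanato seminorm is correct and fills in what the paper leaves implicit. The problem is Step~3.

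You write that ``a direct truncation $(v\ast\rho_m)\,\eta_j$ yields only a bounded, not vanishing, error by the truncation lemmas of Appendix~\ref{sec:6}'' and then invoke Mazur's Lemma as in the superconformal case. Neither claim is available here. First, the boundedness you assume is false in general: if $v\in\mathcal{L}^{n/s,n}(\mathbb{R}^n)$ grows like $\log|x|$ (as $BMO$-type functions do), then with cutoffs at scale $B_{2m}\setminus B_m$ the leading error term behaves like $m^{-n}\int_{B_{2m}\setminus B_m}|v\ast\rho_m|^{n/s}\,dx\sim(\log m)^{n/s}$, which diverges. There is no analogue of Lemma~\ref{lm:truncation2} in the conformal regime that bounds $[(v\ast\rho_m)\,\eta_j-v\ast\rho_m]_{W^{s,n/s}}$ uniformly, because the Morrey bound $|u(x)|\le C|x|^{s-n/p}$ that makes Lemma~\ref{lm:truncation2} work degenerates when $s-n/p=0$. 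Without uniform boundedness you have no weakly compact sequence, so the Mazur step never gets off the ground. Second, you misread the relevant appendix lemma: Lemma~\ref{lm:truncation3} is the one tailored to $s\,p=n$, and it gives a \emph{vanishing} error, not a bounded one --- precisely because it subtracts the ball average $\overline{u}_{m^2}$ and uses cutoffs with the slower profile $\eta_j\equiv 1$ on $B_j$, $\mathrm{supp}\,\eta_j\subset B_{j^2}$, $|\nabla\eta_j|\le C/j^2$. The paper's surjectivity proof then sets $v_m=(v\ast\rho_m-\overline{v}_{j_m,m})\,\eta_{j_m}$, uses invariance of the seminorm under additive constants to replace $v$ by $v-\overline{v}_{j_m,m}$ in the comparison, and concludes by the triangle inequality plus Lemma~\ref{lm:cazzata}. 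No Mazur argument is needed or used. Your Step~3 therefore has a genuine gap: it rests on an unjustified (and in fact false) boundedness assertion, and it overlooks the mean-subtraction device that is the whole point of Lemma~\ref{lm:truncation3}.
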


\begin{proof}
	The proof goes along the same lines of that of Theorem \ref{thm:sp > n}. The fact that $\dot W^{s,n/s} (\mathbb{R}^n)$ is a normed vector space is straightforward.
\par
Let us now consider the identification of our space.
	We take an equivalence class
	\[
	U=\{(u_m)_{m\in\mathbb{N}}\}_{s,p}\in \mathcal{D}^{s,p}(\mathbb{R}^n),
	\]
	and apply Lemma \ref{lem:homog Sobolev sp = n}, so to get the new sequence $(\widetilde u_m)_{m\in\mathbb{N}}$. By construction, we have
	\[
	U=\{(u_m)_{m\in\mathbb{N}}\}_{s,p}=\{(\widetilde u_m)_{m\in\mathbb{N}}\}_{s,p},
	\]
	and by Lemma \ref{lem:homog Sobolev sp = n}, we know that $(\widetilde u_m)_{m\in\mathbb{N}}$ converges to some function
	\[
	u\in \Big\{  \varphi \in \mathcal{L}^{\frac{n}{s},n}(\mathbb{R}^n)\, :\, [ \varphi ]_{W^{s,\frac{n}{s}} (\mathbb{R}^n)} < +\infty  \Big\}.
	\]
	Thus we may identify $U$ with the equivalence class $\{u\}_C$, i.\,e. we define $\mathcal{J}(U) =\{u\}_C$. As in the case $s\,p>n$, it is easily seen that this is a linear isometry.
	\par
	On the other hand, for every equivalence class $\{v\}_C\in \dot W^{s,p}(\mathbb{R}^n)$, we may select any representative $v$. 	Then we can construct a sequence $(v_m)_{m\in\mathbb{N}}\subset C^\infty_c(\mathbb{R}^n)$ such that
	\[
	\lim_{m\to\infty} [v_m-v]_{W^{s,p}(\mathbb{R}^n)}=0.
	\]
	It is indeed sufficient to repeat the construction of {\bf Part 1} in the proof of Theorem \ref{thm:sp < n}, with some minor modifications. We consider a sequence of cut-off functions
	$ \eta_j\in C^\infty_c (\mathbb R^n)$ with $\mathrm{supp\,} \eta_j \subset B_{j^2}$
	such that
\[
0\le \eta_j\le 1,\qquad \eta_j\equiv 1 \mbox{ on } B_j,\qquad |\nabla \eta_j|\le \frac{C}{j^2},
\]
and observe that by Lemma \ref{lm:truncation3}, we have
\[
\lim_{j\to\infty}[(v\ast\rho_m-\overline{v}_{j,m})\,\eta_j-(v\ast\rho_m-\overline{v}_{j,m})]_{W^{s,p}(\mathbb{R}^n)}=0.
\]
Here we used the shortcut notation
\[
\overline{v}_{j,m}=\frac{1}{|B_{j^2}(0)|}\,\int_{B_{j^2}(0)} v\ast\rho_m\,dx.
\]
Thus, for every $m\ge 1$ we can choose $j_m\in\mathbb{N}$ such that
\[
[(v\ast\rho_m-\overline{v}_{j_m,m})\,\eta_{j_m}-(v\ast\rho_m-\overline{v}_{j_m,m})]_{W^{s,p}(\mathbb{R}^n)}\le \frac{1}{m}.
\]
The sequence $(v_m)_{m\in\mathbb{N}}\subset C^\infty_c(\mathbb{R}^n)$ is then given by
\[
v_m=(v\ast\rho_m-\overline{v}_{j_m,m})\,\eta_{j_m}.
\]
Indeed, by construction we have
\[
\begin{split}
[v_m-v]_{W^{s,\frac{n}{s}}(\mathbb{R}^n)}&= [v_m-(v-\overline{v}_{j_m,m})]_{W^{s,\frac{n}{s}}(\mathbb{R}^n)}\\
&\le [(v\ast\rho_m-\overline{v}_{j_m,m})\,\eta_{j_m}-(v\ast\rho_m-\overline{v}_{j_m,m})]_{W^{s,p}(\mathbb{R}^n)}\\
&+[(v\ast\rho_m-\overline{v}_{j_m,m})-(v-\overline{v}_{j_m,m})]_{W^{s,p}(\mathbb{R}^n)}\\
&\le \frac{1}{m}+[(v\ast\rho_m-\overline{v}_{j_m,m})-(v-\overline{v}_{j_m,m})]_{W^{s,p}(\mathbb{R}^n)}\\
&=\frac{1}{m}+[v\ast\rho_m-v]_{W^{s,p}(\mathbb{R}^n)}.
\end{split}
\]
Thus, by Lemma \ref{lm:cazzata} again, we get
\[
\lim_{m\to\infty} [v_m-v]_{W^{s,p}(\mathbb{R}^n)}=0,
\]
as claimed. Accordingly, we can identify $\{v\}_C$ with the equivalence class $\{(v_m)_{m\in\mathbb{N}}\}_{s,p}$. In other words, this proves the surjectivity of $\mathcal{J}$. The proof is complete.\end{proof}

\begin{remark}
	In the case $s\,p=n$, a natural replacement for the Sobolev inequality is the {\it Moser-Trudinger inequality}. However, this holds for open sets with finite measure, see e.g. \cite{Parini2019}.
\end{remark}

\subsection{The exceptional limit case $s=n=1$}

Observe that in the previous section we have the restriction $s<n$. Thus, in order to complete the picture in the conformal case, there is still a limiting case which is missing: the case $s=1=n$. Accordingly, the summability exponent is $p=1$, as well.
This one-dimensional case is special
and deserves to be treated separately.
\par
As we will see, this situation is similar to the case $s\,p < n$. Indeed, we have the following result.
\begin{theorem}[The case $s=p=n=1$]
\label{teo:spn1}
Let us define
\[	
\dot W^{1,1} (\mathbb{R})  \left\{  u  \in C_0 (\mathbb{R})\,  :\, u'\in L^1(\mathbb{R}) \right \},
\]
where the derivative $u'$ is intended in the sense of distributions.
	Then there exists a linear isometric isomorphism
	\begin{equation*}
	\mathcal{J}: \mathcal{D}^{1,1} (\mathbb{R}) \to	\dot W^{1,1} (\mathbb{R}).
	\end{equation*}	
	In other words, the space $\mathcal{D}^{1,1}(\mathbb{R})$ can be identified with $\dot W^{1,1}(\mathbb{R}^n)$.	
\end{theorem}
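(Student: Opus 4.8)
The plan is to follow the same three-step scheme used in the proof of Theorem~\ref{thm:sp < n}, with the one-dimensional fundamental theorem of calculus playing the role that the fractional Sobolev inequality plays there. Recall that by definition $[u]_{W^{1,1}(\mathbb{R})}=\|u'\|_{L^1(\mathbb{R})}$, so that $\mathcal{D}^{1,1}(\mathbb{R})$ is the abstract completion of $\big(C_c^\infty(\mathbb{R}),\|(\cdot)'\|_{L^1}\big)$. The key elementary facts I would record first are: (i) for $v\in C_c^\infty(\mathbb{R})$ one has $v(x)=\int_{-\infty}^x v'$ and hence $\|v\|_{L^\infty(\mathbb{R})}\le\|v'\|_{L^1(\mathbb{R})}$; (ii) a function $u\in C_0(\mathbb{R})$ with $u'\in L^1(\mathbb{R})$ is absolutely continuous with $u(x)=\int_{-\infty}^x u'$ and $\int_{\mathbb{R}}u'=0$, and conversely every $g\in L^1(\mathbb{R})$ with $\int_{\mathbb{R}}g=0$ arises in this way; in particular $u\mapsto\|u'\|_{L^1}$ is a genuine norm on $\dot W^{1,1}(\mathbb{R})$ (if $u'=0$ then $u$ is constant, hence $\equiv 0$ since it vanishes at infinity).

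\textbf{Part 1 (density of $C_c^\infty(\mathbb{R})$).} Given $u\in\dot W^{1,1}(\mathbb{R})$, I would approximate it by $u_m=(u\ast\rho_m)\,\eta_{j_m}$ exactly as in Part~1 of Theorem~\ref{thm:sp < n}: since $(u\ast\rho_m)'=u'\ast\rho_m\to u'$ in $L^1(\mathbb{R})$ we get $[u\ast\rho_m-u]_{W^{1,1}}\to0$, while for the cut-offs $\eta_j$ (with $\eta_j\equiv1$ on $B_j$, $\operatorname{supp}\eta_j\subset B_{2j}$, $|\eta_j'|\le C/j$, so $\|\eta_j'\|_{L^1(\mathbb{R})}\le C$ uniformly) one has
\[
[(u\ast\rho_m)\eta_j-u\ast\rho_m]_{W^{1,1}}\le \|u'\ast\rho_m\|_{L^1(\mathbb{R}\setminus B_j)}+\|u\ast\rho_m\|_{L^\infty(\mathbb{R}\setminus B_j)}\,\|\eta_j'\|_{L^1(\mathbb{R})},
\]
and both terms tend to $0$ as $j\to\infty$ with $m$ fixed, since $u'\ast\rho_m\in L^1(\mathbb{R})$ and $u\ast\rho_m\in C_0(\mathbb{R})$; a diagonal choice of $j_m$ concludes. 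Alternatively, using the identity $\{v':v\in C_c^\infty(\mathbb{R})\}=\{g\in C_c^\infty(\mathbb{R}):\int_{\mathbb{R}}g=0\}$, one can argue directly at the level of $g=u'$: approximate $g$ in $L^1$ by some $\tilde g\in C_c^\infty$ and correct the mean by subtracting $\big(\int_{\mathbb{R}}\tilde g\big)\phi$ for a fixed bump $\phi\in C_c^\infty$ with $\int_{\mathbb{R}}\phi=1$.

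\textbf{Parts 2 and 3 (Cauchy sequences and the isometry).} Let $(u_m)_{m\in\mathbb{N}}\subset C_c^\infty(\mathbb{R})$ be Cauchy for $\|(\cdot)'\|_{L^1}$. Then $(u_m')$ is Cauchy in $L^1(\mathbb{R})$, hence converges to some $g\in L^1(\mathbb{R})$, and $\int_{\mathbb{R}}g=\lim_m\int_{\mathbb{R}}u_m'=0$ because each $u_m$ has compact support. Setting $u(x)=\int_{-\infty}^x g$ we obtain $u'=g$, $u(-\infty)=0$ and $u(+\infty)=\int_{\mathbb{R}}g=0$, so $u\in C_0(\mathbb{R})$ and $u\in\dot W^{1,1}(\mathbb{R})$; moreover $\|u_m-u\|_{L^\infty}\le\|u_m'-g\|_{L^1}\to0$ and $[u_m-u]_{W^{1,1}}=\|u_m'-g\|_{L^1}\to0$. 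Then for $U=\{(u_m)_{m\in\mathbb{N}}\}_{1,1}\in\mathcal{D}^{1,1}(\mathbb{R})$ I set $\mathcal{J}(U):=u$; this is independent of the representative and linear by the usual triangle-inequality arguments, it is an isometry because $\|U\|_{\mathcal{D}^{1,1}}=\lim_m\|u_m'\|_{L^1}=\|g\|_{L^1}=\|u'\|_{L^1}=\|u\|_{\dot W^{1,1}}$, and it is surjective by Part~1. The fact that $\dot W^{1,1}(\mathbb{R})$ is a Banach space follows from the existence of this isometry.

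\textbf{Main obstacle.} As in the other regimes, the only delicate point is the density step: one cannot merely mollify, since the support of $u$ need not be compact, and one must verify that cutting off does not spoil convergence of the \emph{derivatives} in $L^1$. This is clean here precisely because elements of $\dot W^{1,1}(\mathbb{R})$ vanish at infinity and $\|\eta_j'\|_{L^1(\mathbb{R})}$ stays bounded (unlike the critical fractional cases, no logarithmic cut-offs are needed). The conceptual heart of the statement---the reason this conformal endpoint behaves like the subconformal case rather than producing a quotient---is the observation in Part~2 that compact support forces $\int_{\mathbb{R}}u_m'=0$, a constraint preserved in the limit, which pins down the limit function as a bona fide element of $C_0(\mathbb{R})$; equivalently, there is no null-sequence $\varphi_m\to1$ with $\|\varphi_m'\|_{L^1}\to0$, since $\|\varphi_m'\|_{L^1}$ bounds the total oscillation of $\varphi_m$ from below.
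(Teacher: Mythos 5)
Your proof is correct and follows essentially the same scheme as the paper's: deduce uniform convergence of Cauchy sequences from the one-dimensional embedding $\|v\|_{L^\infty(\mathbb{R})}\le\|v'\|_{L^1(\mathbb{R})}$, construct $\mathcal{J}$ exactly as in Theorem~\ref{thm:sp < n}, and establish surjectivity via density of $C_c^\infty(\mathbb{R})$ in $\dot W^{1,1}(\mathbb{R})$. The one useful addition you make is to spell out the mollification-plus-cutoff estimate for the density step, which the paper dismisses as ``a standard fact'' and leaves to the reader, and to record the structural reason ($\int_{\mathbb{R}}u_m'\,dx=0$ is preserved in the $L^1$ limit, so no null-sequence $\varphi_m\to1$ can exist) why this conformal endpoint behaves like the subconformal range rather than the other $s\,p=n$ cases.
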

\begin{proof}
	By basic Calculus, we know that for every $u\in C^\infty_c(\mathbb{R})$ and every $z<x<y$ we have
	\[
	|u(x) - u(y)| = \left| \int_x^y u'(s)\, d s \right| \le \int_x^y |u'(s)|\,ds,
	\]
	and
	\[
	|u(x) - u(z)| = \left| \int_z^x u'(s)\, d s \right| \le \int_z^x |u'(s)|\,ds
	\]
	By taking the limits as $y$ goes to $+\infty$ and $z$ goes to $-\infty$ in the previous inequalities, we get
	\[
	|u(x)|\le \int_x^{+\infty} |u'(s)|\,ds \qquad \mbox{ and }\qquad |u(x)|\le \int_{-\infty}^x |u'(s)|\,ds.
	\]
By summing these two estimates and passing to supremum in $x$, we have an analogue to Sobolev's inequality
	\begin{equation}
	\label{eq:p = n = 1 Sobolev}
	2\,\|u\|_{L^\infty(\mathbb{R})} \le  [u]_{W^{1,1} (\R)}, \qquad  \qquad \mbox{ for every } u \in C_c^\infty (\mathbb{R}^n).
	\end{equation}
	Therefore, every Cauchy sequence $(u_m)_{m\in\mathbb{N}}\subset C^\infty_c(\mathbb{R})$ in the $W^{1,1}$ seminorm is a Cauchy sequence in $C_0(\mathbb{R})$, as well. The latter is the Banach space of continuous functions vanishing at infinity. Thus we can infer uniform convergence of $(u_m)_{m\in\mathbb{N}}$ to a function $u\in C_0(\mathbb{R})$. Moreover, by using that $L^1(\mathbb{R})$ is a Banach space, we can infer convergence of $(u'_m)_{m\in\mathbb{N}}$ to a function $v\in L^1(\mathbb{R})$. It is easily seen that it must result
	\[
	v=u',
	\]
	thus $u\in \dot W^{1,1}(\mathbb{R})$. This argument permits to define the isometry $\mathcal{J}$, exactly as we did in the proof of Theorem \ref{thm:sp < n}. In order to prove the surjectivity of $\mathcal{J}$, it is sufficient to show that for every $u\in \dot W^{1,1}(\mathbb{R})$, there exists a sequence $(u_m)_{m\in\mathbb{N}}\subset C^\infty_c(\mathbb{R})$ such that
	\[
	\lim_{m\to\infty} [u_m-u]_{W^{1,1}(\mathbb{R})}=0.
	\]
	This is a standard fact, we leave the details to the reader.
\end{proof}
\begin{remark}
It is not difficult to see that inequality \eqref{eq:p = n = 1 Sobolev} is sharp. It is sufficient to take a sequence $(u_m)_{m\in\mathbb{N}}\subset C^\infty_c(\mathbb{R})$ such that
\[
\lim_{m\to\infty} \left([u_m-u]_{W^{1,1}(\mathbb{R})}+\|u_m-u\|_{L^\infty(\mathbb{R})}\right)=0,
\]
where $u$ is the function
\[
u(x)=\max\{1-|x|,\,0\}.
\]
Such a sequence can be constructed by standard convolution methods.
\end{remark}


\section{Comments and open questions}

\noindent \bf 1. \rm  Our three embedding theorems pose the question of what are the optimal constants, and whether they are achieved by extremal functions. We specifically refer to inequality \eqref{eq:GNS sp < n} for $s\,p<n$, inequality \eqref{PW} for $s\,p=n$, and inequality \eqref{eq:Morrey} for $s\,p>n$.

 Note that in the case $s=1$, the extremals in  the Gagliardo-Nirenberg-Sobolev range $1<p<n$ were found by Aubin \cite{Aub76} and Talenti \cite{Talenti}. The extremals in the Morrey range $p>n$, still for $s=1$, have been recently described by Hynd and Sauffert \cite{HyndSauff}. We do not know of any similar result in the limit case $p=n$.

\medskip

\noindent \bf 2. \rm In the case of proper subsets $\Omega\subset\mathbb{R}^n$ the characterization result will depend on the different options. Thus, one may take completions of $C^\infty_c(\Omega)$ with respect to:
	\begin{itemize}
	\item the full norm (but localized on $\Omega$)
\[
\|u\|_{L^p(\Omega)}+[u]_{W^{s,p}(\Omega)};	
\]
\item	the full norm (but spread all over $\mathbb{R}^n$),
\[
\|u\|_{L^p(\Omega)}+[u]_{W^{s,p}(\mathbb{R}^n)};	
\]
\item the Gagliardo-Slobodecki\u{\i} seminorm (localized on $\Omega$)
\[
[u]_{W^{s,p}(\Omega)};	
\]
\item or the Gagliardo-Slobodecki\u{\i} seminorm (spread all over $\mathbb{R}^n$).
\end{itemize}
\smallskip
In general, the resulting spaces do not coincide. See \cite[Section 2]{Brasco2019} for some comments.

\medskip

\appendix

\section{Approximation by convolution}
\label{sec:convolution}
Let $\rho\in C^\infty_c(\mathbb{R}^n)$ be a  standard Friedrichs mollifier supported on the ball $B_1(0)$, and define the sequence of smoothing kernels
\[
\rho_m(x)=m^n\,\rho(m\,x),\qquad \mbox{ for } m\in\mathbb{N}\setminus\{0\}.
\]
In \cite[Theorem 2.3]{DTGVTaylor20}, the second and third authors proved, by means of interpolation techniques, that
\begin{equation*}
\|u * \rho_m - u \|_{L^p (\mathbb R^n)} \le \frac{C}{m^s}\, [u]_{W^{s,p} (\mathbb R^n)},\qquad \mbox{ for every }u \in W^{s,p} (\mathbb R^n),\ m\ge 1,
\end{equation*}
for a constant $C=C(n)>0$. Here, we will need a stronger result (indeed, our functions need not belong to $W^{s,p}(\mathbb{R}^n)$), but without a rate of convergence. The proof is standard routine in the theory of $L^p$ spaces, we recall the argument for the reader's convenience.
\begin{lemma}
	\label{lm:cazzata}
	Let $s\in[0,1]$ and $1\le p<+\infty$. For every $u\in L^1_{\rm loc}(\mathbb{R}^n)$ such that
	\[
	[u]_{W^{s,p}(\mathbb{R}^n)}<+\infty,
	\]
	we have
	\begin{equation}
	\label{approssima}
	\lim_{m\to\infty} [u\ast \rho_m-u]_{W^{s,p}(\mathbb{R}^n)}=0.
	\end{equation}
\end{lemma}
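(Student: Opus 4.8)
The plan is to reduce the statement to the strong continuity of translations in $L^p$, combined with Minkowski's integral inequality. Throughout, write $v_m=u\ast\rho_m$, and recall that $\rho_m\ge 0$, $\int_{\mathbb{R}^n}\rho_m=1$, and $\mathrm{supp}\,\rho_m\subset B_{1/m}(0)$.

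First I would dispose of the two endpoints. For $s=0$ the claim is $\|v_m-u\|_{L^p(\mathbb{R}^n)}\to 0$ for $u\in L^p(\mathbb{R}^n)$: writing $v_m-u=\int_{\mathbb{R}^n}\rho_m(z)\,\big(u(\cdot-z)-u\big)\,dz$ and applying Minkowski's integral inequality gives $\|v_m-u\|_{L^p}\le\sup_{|z|\le 1/m}\|u(\cdot-z)-u\|_{L^p}\to 0$ by continuity of translations on $L^p$. For $s=1$ the hypothesis means $\nabla u\in L^p(\mathbb{R}^n;\mathbb{R}^n)$, and since $\nabla v_m=(\nabla u)\ast\rho_m$, the same computation applied componentwise to $\nabla u$ yields $[v_m-u]_{W^{1,p}(\mathbb{R}^n)}=\|(\nabla u)\ast\rho_m-\nabla u\|_{L^p}\to 0$.

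For the main range $s\in(0,1)$ I would introduce the auxiliary function
\[
G(x,y)=\frac{u(x)-u(y)}{|x-y|^{\frac np+s}},\qquad (x,y)\in\mathbb{R}^n\times\mathbb{R}^n,
\]
so that $[u]_{W^{s,p}(\mathbb{R}^n)}^p=s\,(1-s)\,\|G\|_{L^p(\mathbb{R}^{2n})}^p$; in particular $G\in L^p(\mathbb{R}^{2n})$ by hypothesis. Let $G_m$ denote the analogous function built from $v_m$ in place of $u$. The key identity is that, since $|(x-z)-(y-z)|=|x-y|$ for every $z$,
\[
G_m(x,y)=\int_{\mathbb{R}^n}\rho_m(z)\,\frac{u(x-z)-u(y-z)}{|x-y|^{\frac np+s}}\,dz=\int_{\mathbb{R}^n}\rho_m(z)\,G(x-z,y-z)\,dz,
\]
that is, $G_m$ is the average over $z\in B_{1/m}(0)$ of the translates of $G$ by $(z,z)\in\mathbb{R}^{2n}$. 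Subtracting $G$ and using Minkowski's integral inequality,
\[
\|G_m-G\|_{L^p(\mathbb{R}^{2n})}\le\int_{\mathbb{R}^n}\rho_m(z)\,\big\|G(\cdot-z,\cdot-z)-G\big\|_{L^p(\mathbb{R}^{2n})}\,dz\le\sup_{|z|\le 1/m}\big\|G(\cdot-z,\cdot-z)-G\big\|_{L^p(\mathbb{R}^{2n})}.
\]
Since $(z,z)\to 0$ in $\mathbb{R}^{2n}$ as $z\to 0$, the right-hand side tends to $0$ by the strong continuity of translations on $L^p(\mathbb{R}^{2n})$ (valid for $1\le p<+\infty$), and therefore $[v_m-u]_{W^{s,p}(\mathbb{R}^n)}^p=s\,(1-s)\,\|G_m-G\|_{L^p(\mathbb{R}^{2n})}^p\to 0$.

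There is no serious obstacle here: the only step deserving a little care is the representation of $G_m$ as a translation-average of $G$, which hinges on the diagonal invariance $|(x-z)-(y-z)|=|x-y|$; once this is in hand the rest is standard $L^p$ theory (joint measurability of $(x,y,z)\mapsto\rho_m(z)\,G(x-z,y-z)$ and Tonelli's theorem to justify Minkowski's integral inequality, together with the density of $C_c$ in $L^p$ underlying translation continuity). Note that the argument uses neither that $v_m-u\in L^p$ nor any quantitative rate, in accordance with $u$ being merely locally integrable with finite Gagliardo--Slobodecki\u{\i} seminorm.
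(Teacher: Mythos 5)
Your argument is correct and essentially identical to the paper's: both reduce the claim to the representation of the Gagliardo quotient of $u\ast\rho_m$ as a $\rho_m$-average of diagonal translates of $H(x,y)=(u(x)-u(y))/|x-y|^{n/p+s}$, and then invoke continuity of translations in $L^p(\mathbb{R}^{2n})$. The only cosmetic difference is that you pass the $L^p$ norm inside the $z$-integral via Minkowski's integral inequality, whereas the paper uses Jensen's inequality (with $\rho_m\,dz$ a probability measure) followed by Fubini -- two interchangeable ways of accomplishing the same step.
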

\begin{proof}
	We focus on the case $s\in(0,1)$, the extremal cases $s=0$ and $s=1$ being simpler and well-known. By using that
	\[
	\int_{\mathbb{R}^n} \rho_m\,dx=1,\qquad \mbox{ for every } m\ge 1,
	\]
	we have
	\[
	\begin{split}
	&[u\ast \rho_m-u]_{W^{s,p}(\mathbb{R}^n)}^p\\&=s\,(1-s)\,\iint_{\mathbb{R}^n\times\mathbb{R}^n} \frac{|u\ast\rho_m(x)-u(x)-(u\ast\rho_m(y)-u(y))|^p}{|x-y|^{n+s\,p}}\,dx\,dy\\
	&=s\,(1-s)\,\iint_{\mathbb{R}^n\times\mathbb{R}^n} \frac{\left|\displaystyle\int (u(x-z)-u(x))\,\rho_m(z)\,dz-\int (u(y-z)-u(y))\,\rho_m(z)\,dz\right|^p}{|x-y|^{n+s\,p}}\,dx\,dy\\
	&=s\,(1-s)\,\iint_{\mathbb{R}^n\times\mathbb{R}^n} \frac{\left|\displaystyle\int \Big[(u(x-z)-u(x))-(u(y-z)-u(y))\Big]\,\rho_m(z)\,\,dz\right|^p}{|x-y|^{n+s\,p}}\,dx\,dy.
	\end{split}
	\]
	By using Jensen's inequality and the fact that $\rho_m(z)\,dz$ is a probability measure, we then get
	\[
	[u\ast \rho_m-u]_{W^{s,p}(\mathbb{R}^n)}^p\le s\,(1-s)\iint_{\mathbb{R}^n\times\mathbb{R}^n}\!\!\! \frac{\displaystyle\int \Big|(u(x-z)-u(x))-(u(y-z)-u(y))\Big|^p\rho_m(z)\,dz}{|x-y|^{n+s\,p}}dxdy.
	\]
	If we now exchange the order of integration, use that $\rho_m$ is supported on $B_{1/m}(0)$ and the fact that
	\[
	\lim_{|z|\to 0} \left\|H(\cdot-z,\cdot-z)-H\right\|_{L^p(\mathbb{R}^n\times\mathbb{R}^n)}=0,\qquad \mbox{ where } H(x,y)= \frac{u(x)-u(y)}{|x-y|^{\frac{n}{p}+s}} \in L^p(\mathbb{R}^n\times\mathbb{R}^n),
	\]
	we easily get the desired conclusion \eqref{approssima}.
\end{proof}

\section{Truncation lemmas}
\label{sec:6}

In what follows, we will use the shortcut notation $B_r$ for $B_r(0)$.
The aim of this section is to show that there exist smooth cut-off functions $\eta_m$ with
\begin{equation*}
	\eta_m = 1 \text{ in } B_{a_m}, \qquad \eta_m = 0 \text{ in } \mathbb{R}^n\setminus B_{b_m}, \qquad 0 \le \eta_m \le 1,
\end{equation*}
for suitable radii $a_m \le b_m$ diverging to $+\infty$,  such that  for every
$[u]_{W^{s,p} (\mathbb R^n) } < +\infty$ we have
\begin{equation*}
\lim_{m\to\infty}	[ \eta_m \, u - u ]_{W^{s,p} (\mathbb R^n) }= 0.
\end{equation*}
We will prove this for $s\,p < n$ and $s\,p = n$. For the case $s\,p > n$, we will only show that $[ \eta_m \, u - u ]_{W^{s,p} (\mathbb R^n) }$ is uniformly bounded: this is sufficient for our scope, since we can then apply weak compactness and a convexity trick based on Mazur's Lemma.

For convenience, we will write $\varphi_m = 1 - \eta_m$, and prove that
\begin{equation*}
[ \varphi_m \, u  ]_{W^{s,p} (\mathbb R^n) } \to 0 .
\end{equation*}
The main difficulty arises from the fact that only minimal integrability assumptions are assumed on $u$. In particular, {\it we do not require} that $u\in W^{s,p}(\mathbb{R}^n)$.
\subsection{Case $s\,p < n$}

For $0<s\le 1$ and $1\le p<+\infty$ such that $s\,p<n$, we recall the definition of critical Sobolev exponent
\[
p^\star_s=\frac{n\,p}{n-s\,p}.
\]
Then we have the following technical result, which is quite useful.
\begin{lemma}[Truncation lemma $s\,p<n$]
	\label{lm:truncation}
	Let
	\[
	u\in \Big\{  \varphi \in L^{p^\star_s} ( \mathbb{R}^n )\, :\,  [ \varphi ]_{W^{s,p} (\mathbb{R}^n)} < +\infty  \Big\} =:\dot W^{s,p}(\mathbb{R}^n).
	\]
	If $(\varphi_m)_{m\in\mathbb{N}}\subset C^\infty_0(\mathbb{R}^n)$ is a sequence of non-negative cut-off functions such that $0\le \varphi_m\le 1$ and
	\[
	\varphi_m\equiv 0\quad \mbox{ on } B_m,\qquad \varphi_m\equiv 1\quad \mbox{ on } \mathbb{R}^n\setminus B_{2\,m} \qquad \mbox{ and }\qquad \|\nabla\varphi_m\|_{L^\infty(\mathbb{R}^n)}\le \frac{C}{m},
	\]
	then we have
	\[
	\lim_{m\to\infty}[\varphi_m\, u]_{W^{s,p}(\mathbb{R}^n)}=0.
	\]
\end{lemma}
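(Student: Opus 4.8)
The plan is to start from the pointwise algebraic identity
\[
\varphi_m(x)\,u(x)-\varphi_m(y)\,u(y)=\varphi_m(x)\,\big(u(x)-u(y)\big)+u(y)\,\big(\varphi_m(x)-\varphi_m(y)\big),
\]
raise it to the power $p$, use $|a+b|^p\le 2^{p-1}(|a|^p+|b|^p)$, and integrate against $s\,(1-s)\,|x-y|^{-n-s\,p}\,dx\,dy$; this reduces the assertion to showing that
\[
I_m:=\iint\frac{|\varphi_m(x)|^p\,|u(x)-u(y)|^p}{|x-y|^{n+s\,p}}\,dx\,dy
\quad\text{and}\quad
II_m:=\iint\frac{|u(y)|^p\,|\varphi_m(x)-\varphi_m(y)|^p}{|x-y|^{n+s\,p}}\,dx\,dy
\]
both tend to $0$ as $m\to\infty$ (the fixed factor $s\,(1-s)$ being irrelevant here). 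Since $\varphi_m\equiv 0$ on $B_m$ and $0\le\varphi_m\le 1$, the integrand of $I_m$ is dominated by the fixed $L^1(\mathbb{R}^n\times\mathbb{R}^n)$ function $|u(x)-u(y)|^p/|x-y|^{n+s\,p}$ (integrable exactly because $[u]_{W^{s,p}(\mathbb{R}^n)}<+\infty$) times the indicator of $(\mathbb{R}^n\setminus B_m)\times\mathbb{R}^n$, which tends to $0$ pointwise; hence $I_m\to 0$ by dominated convergence.

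The heart of the matter is $II_m$, which I would decompose according to the position of $y$, using that the integrand vanishes unless $x$ or $y$ lies in the transition annulus $B_{2m}\setminus B_m$ or the two points lie on opposite sides of it. For $y\in B_{m/2}$ we have $\varphi_m(y)=0$, so the integrand is supported on $\{|x|>m\}$, where $|x-y|\ge|x|/2$ gives $\int_{|x|>m}|x-y|^{-n-s\,p}\,dx\le C\,m^{-s\,p}$, whence this piece is $\le C\,m^{-s\,p}\int_{B_{m/2}}|u|^p$. For $y\in B_{5m/2}\setminus B_{m/2}$ I would split the $x$-integral at $|x-y|=m/2$: on the near part I use the Lipschitz bound $|\varphi_m(x)-\varphi_m(y)|\le(C/m)\,|x-y|$ together with $\int_{|z|\le m/2}|z|^{p-n-s\,p}\,dz\le C\,m^{p(1-s)}$, and on the far part $|\varphi_m(x)-\varphi_m(y)|\le 1$ together with $\int_{|x-y|>m/2}|x-y|^{-n-s\,p}\,dx\le C\,m^{-s\,p}$; both contributions are $\le C\,m^{-s\,p}\int_{B_{5m/2}\setminus B_{m/2}}|u|^p$. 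For $y\notin B_{5m/2}$ we have $\varphi_m(y)=1$, so the integrand is supported on $\{|x|<2m\}$, where $|x-y|\ge|y|/5$ gives $\int_{B_{2m}}|x-y|^{-n-s\,p}\,dx\le C\,m^n\,|y|^{-n-s\,p}$, whence this piece is $\le C\,m^n\int_{|y|>5m/2}|u(y)|^p\,|y|^{-n-s\,p}\,dy$.

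Each of these is then closed by H\"older's inequality with exponents $p^\star_s/p$ and $p^\star_s/(p^\star_s-p)=n/(s\,p)$, exploiting the dimensional balance $1-p/p^\star_s=s\,p/n$: the annular term becomes $\le C\,m^{-s\,p}\cdot m^{s\,p}\,\big(\int_{\mathbb{R}^n\setminus B_{m/2}}|u|^{p^\star_s}\big)^{p/p^\star_s}$ and, since $\int_{|y|>5m/2}|y|^{-(n+s\,p)n/(s\,p)}\,dy\le C\,m^{-n^2/(s\,p)}$, the outer term becomes $\le C\,m^n\cdot m^{-n}\,\big(\int_{\mathbb{R}^n\setminus B_{5m/2}}|u|^{p^\star_s}\big)^{p/p^\star_s}$; both vanish as $m\to\infty$ because $u\in L^{p^\star_s}(\mathbb{R}^n)$. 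The only genuinely delicate term — and the one I expect to be the main obstacle — is the deep-interior piece $C\,m^{-s\,p}\int_{B_{m/2}}|u|^p$, for which H\"older alone yields merely the crude bound $C\,\|u\|_{L^{p^\star_s}(\mathbb{R}^n)}^p$, i.e. boundedness but not decay; the fix is the elementary observation that
\[
\lim_{R\to\infty}R^{-s\,p}\int_{B_R}|u|^p\,dx=0\qquad\text{for every }u\in L^{p^\star_s}(\mathbb{R}^n),
\]
proved by writing $\int_{B_R}|u|^p=\int_{B_{R_0}}|u|^p+\int_{B_R\setminus B_{R_0}}|u|^p$, bounding the second term by H\"older as $\le C\,R^{s\,p}\big(\int_{\mathbb{R}^n\setminus B_{R_0}}|u|^{p^\star_s}\big)^{p/p^\star_s}$, and letting first $R\to\infty$ and then $R_0\to\infty$. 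Combining this with the regional estimates gives $II_m\to 0$, hence $[\varphi_m\,u]_{W^{s,p}(\mathbb{R}^n)}\to 0$. Finally, the endpoint $s=1$ (so $p<n$, with $[\,\cdot\,]_{W^{1,p}}=\|\nabla\,\cdot\,\|_{L^p}$) is classical and handled separately through the Leibniz rule $\nabla(\varphi_m u)=\varphi_m\nabla u+u\,\nabla\varphi_m$, using $\|\varphi_m\nabla u\|_{L^p(\mathbb{R}^n)}\le\|\nabla u\|_{L^p(\mathbb{R}^n\setminus B_m)}$ and $\|u\,\nabla\varphi_m\|_{L^p(\mathbb{R}^n)}\le(C/m)\|u\|_{L^p(B_{2m}\setminus B_m)}\le C\,\|u\|_{L^{p^*}(\mathbb{R}^n\setminus B_m)}$, both of which tend to $0$.
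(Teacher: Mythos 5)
Your proof is correct, but it organizes the computation differently from the paper. The paper partitions the double integral directly into five pieces $\mathcal{I}_1,\dots,\mathcal{I}_5$ according to the position of \emph{both} $x$ and $y$ relative to $B_m$, $B_{2m}\setminus B_m$, and $\mathbb{R}^n\setminus B_{2m}$, and only within $\mathcal{I}_3$ and $\mathcal{I}_4$ does it apply the Leibniz-type split $|\varphi_m(x)u(x)-\varphi_m(y)u(y)|^p\le 2^{p-1}(|\varphi_m(x)-\varphi_m(y)|^p|u(x)|^p+|\varphi_m(y)|^p|u(x)-u(y)|^p)$. You instead apply that split \emph{globally} at the outset, which cleanly isolates a term $I_m$ that dies by dominated convergence alone (this absorbs the paper's $\mathcal{I}_5$ and parts of $\mathcal{I}_3$, $\mathcal{I}_4$ in one stroke), and then you classify the remaining term $II_m$ by the position of $y$ alone, splitting at $|x-y|=m/2$ inside the transition zone. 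The trade-off is that your global split lets $|u(y)|^p$ roam over all of $\mathbb{R}^n$, so you pick up a \emph{deep-interior} piece $Cm^{-s\,p}\int_{B_{m/2}}|u|^p$ that cannot vanish by the tail-of-$L^{p^\star_s}$ argument; you correctly flag this as the delicate point and supply the extra observation $R^{-s\,p}\int_{B_R}|u|^p\to 0$ for $u\in L^{p^\star_s}(\mathbb{R}^n)$, whose proof (split at $R_0$, apply H\"older on the annulus, send $R\to\infty$ then $R_0\to\infty$) is sound. The paper's decomposition is arranged so that the $|u|^p$-integrals it produces are confined to the transition annulus $B_{2m}\setminus B_m$ and so are handled by the tail argument directly, sidestepping your auxiliary lemma. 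Both routes are valid; yours is somewhat more streamlined in its case structure but pays for it with one extra technical step. Your treatment of the local endpoint $s=1$ via the Leibniz rule and H\"older on the annulus is also correct and matches what the paper sketches.
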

\begin{proof}
We deal with the case $s\in(0,1)$, the case $s=1$ being much simpler.
	We decompose the
	seminorm as follows
	\[
	\begin{split}
	\frac{1}{s\,(1-s)}\,[\varphi_m\, u]_{W^{s,p}(\mathbb{R}^n)}^p&=\iint_{\mathbb{R}^n\times\mathbb{R}^n} \frac{|\varphi_m(x)\,u(x)-\varphi_m(y)\,u(y)|^p}{|x-y|^{n+s\,p}}\,dx\,dy\\
	&=2\,\iint_{B_m\times (B_{2\,m}\setminus B_m)} \frac{|\varphi_m(y)\,u(y)|^p}{|x-y|^{n+s\,p}}\,dx\,dy\\
	&+2\,\iint_{B_m\times (\mathbb{R}^n\setminus B_{2\,m})} \frac{|u(y)|^p}{|x-y|^{n+s\,p}}\,dx\,dy\\
	&+\iint_{(B_{2\,m}\setminus B_m)\times (B_{2\,m}\setminus B_m)} \frac{|\varphi_m(x)\,u(x)-\varphi_m(y)\,u(y)|^p}{|x-y|^{n+s\,p}}\,dx\,dy\\
	&+2\,\iint_{(B_{2\,m}\setminus B_m)\times (\mathbb{R}^n\setminus B_{2\,m})} \frac{|\varphi_m(x)\,u(x)-u(y)|^p}{|x-y|^{n+s\,p}}\,dx\,dy\\
	&+\iint_{(\mathbb{R}^n\setminus B_{2\,m})\times (\mathbb{R}^n\setminus B_{2\,m})} \frac{|u(x)-u(y)|^p}{|x-y|^{n+s\,p}}\,dx\,dy\\
	&=2\,\mathcal{I}_1+2\,\mathcal{I}_2+\mathcal{I}_3+2\,\mathcal{I}_4+\mathcal{I}_5.
	\end{split}
	\]
	We show that each $\mathcal{I}_i$ converges to $0$. We treat each integral separately.
	\medskip

\noindent
	{\bf Estimate for $\mathcal{I}_1$.}
	For the first integral, we first observe that
	\[
	\begin{split}
	\mathcal{I}_1&=\iint_{B_m\times (B_{2\,m}\setminus B_m)} \frac{|\varphi_m(y)\,u(y)|^p}{|x-y|^{n+s\,p}}\,dx\,dy\\
	&=\iint_{B_m\times (B_{2\,m}\setminus B_m)} \frac{|\varphi_m(x)-\varphi_m(y)|^p\,|u(y)|^p}{|x-y|^{n+s\,p}}\,dx\,dy\\
	&\le \frac{C}{m^p}\,\iint_{B_m\times (B_{2\,m}\setminus B_m)} \frac{|u(y)|^p}{|x-y|^{n+s\,p-p}}\,dx\,dy.
	\end{split}
	\]
	By noticing that
	\[
	B_{m}\subset B_{3\,m}(y),\qquad \mbox{ for } y\in B_{2\,m}\setminus B_m,
	\]
	we get
	\[
	\int_{B_m} \frac{1}{|x-y|^{n+s\,p-p}}\,dx\le \int_{B_{3\,m}(y)}\frac{1}{|x-y|^{n+s\,p-p}}\,dx\le C\,m^{p-s\,p},
	\]
	and thus
	\[
	\mathcal{I}_1\le\frac{C}{m^{s\,p}}\,\int_{B_{2\,m}\setminus B_m} |u(y)|^p\,dy.
	\]
	The last integral can be estimated by H\"older's inequality
	\[
	\frac{1}{m^{s\,p}}\,\int_{B_{2\,m}\setminus B_m} |u(y)|^p\,dx\le C\,m^{n\,\left(1-\frac{p}{p^\star_s}\right)-s\,p}\,\left(\int_{B_{2\,m}\setminus B_m} |u(y)|^{p^\star_s}\,dx\right)^\frac{p}{p^\star_s}.
	\]
	By observing that
	\[
	n\,\left(1-\frac{p}{p^\star_s}\right)-s\,p=0,
	\]
	we conclude that $\mathcal{I}_1$ converges to $0$ as $m$ goes to $\infty$, by using that $u\in L^{p^\star_s}(\mathbb{R}^n)$ and the Dominated Convergence Theorem.
	\medskip

\noindent
	{\bf Estimate for $\mathcal{I}_2$.}  As for $\mathcal{I}_2$, by using Fubini's Theorem and H\"older's inequality, we get
	\[
	\mathcal{I}_2\le 2\,\int_{B_m}\,\left(\int_{\mathbb{R}^n\setminus B_{2\,m}} |u(y)|^{p^\star_s}\,dy\right)^\frac{n-s\,p}{n}\,\left(\int_{\mathbb{R}^n\setminus B_{2\,m}} |x-y|^{-(n+s\,p)\,\frac{n}{s\,p}}\,dy\right)^\frac{s\,p}{n}\,dx.
	\]
	Observe that
	\[
	B_m(x)\subset B_{2\,m} ,\qquad \mbox{ for every } x\in B_m,
	\]
	thus we get
	\[
	\int_{\mathbb{R}^n\setminus B_{2\,m}} |x-y|^{-(n+s\,p)\,\frac{n}{s\,p}}\,dy\le \int_{\mathbb{R}^n\setminus B_{m}(x)} |x-y|^{-(n+s\,p)\,\frac{n}{s\,p}}\,dy=C\,m^{-\frac{n^2}{s\,p}}.
	\]
	We then obtain
	\[
	\mathcal{I}_2\le C\,m^{-n}\,\left(\int_{\mathbb{R}^n\setminus B_{2\,m}} |u(y)|^{p^\star_s}\,dy\right)^\frac{n-s\,p}{n}\,\left(\int_{B_m}\,dx\right)= C\,\left(\int_{\mathbb{R}^n\setminus B_{2\,m}} |u(y)|^{p^\star_s}\,dy\right)^\frac{n-s\,p}{n}
	\]
	Since we have $u\in L^{p^\star_s}(\mathbb{R}^n)$, the last integral converges to $0$, as $m$ goes to $0$.
	\medskip

\noindent
	{\bf Estimate for $\mathcal{I}_3$.} For the third integral, we have
	\[
	\begin{split}
	\mathcal{I}_3&\le 2^{p-1}\,\iint_{(B_{2\,m}\setminus B_m)\times (B_{2\,m}\setminus B_m)} \frac{|\varphi_m(x)-\varphi_m(y)|^p}{|x-y|^{n+s\,p}}\,|u(x)|^p\,dx\,dy\\
	&+2^{p-1}\,\iint_{(B_{2\,m}\setminus B_m)\times (B_{2\,m}\setminus B_m)} \frac{|u(x)-u(y)|^p}{|x-y|^{n+s\,p}}\,|\varphi_m(y)|^p\,dx\,dy\\
	&\le \frac{C}{m^p}\,\iint_{(B_{2\,m}\setminus B_m)\times (B_{2\,m}\setminus B_m)} \frac{1}{|x-y|^{n+s\,p-p}}\,|u(x)|^p\,dx\,dy\\
	&+2^{p-1}\,\iint_{(B_{2\,m}\setminus B_m)\times (B_{2\,m}\setminus B_m)} \frac{|u(x)-u(y)|^p}{|x-y|^{n+s\,p}}\,dx\,dy.
	\end{split}
	\]
	Now, the second integral converges to $0$ by the Dominated Convergence Theorem. The first one can be handled as we did for $\mathcal{I}_1$.
	\medskip

\noindent
	{\bf Estimate for $\mathcal{I}_4$.}
	We observe that
	\[
	\begin{split}
	\mathcal{I}_4&=\iint_{(B_{2\,m}\setminus B_m)\times (\mathbb{R}^n\setminus B_{2\,m})} \frac{|\varphi_m(x)\,u(x)-u(y)|^p}{|x-y|^{n+s\,p}}\,dx\,dy\\
	&\le 2^{p-1}\,\iint_{(B_{2\,m}\setminus B_m)\times (\mathbb{R}^n\setminus B_{2\,m})} \frac{|\varphi_m(x)-\varphi_m(y)|^p\,|u(x)|^p}{|x-y|^{n+s\,p}}\,dx\,dy \\
	&+2^{p-1}\,\iint_{(B_{2\,m}\setminus B_m)\times (\mathbb{R}^n\setminus B_{2\,m})} \frac{|u(x)-u(y)|^p}{|x-y|^{n+s\,p}}\,dx\,dy,
	\end{split}
	\]
	where we used that $\varphi_m=1$ on the complement of $B_{2\,m}$. The last integral converges to $0$, while for the first one we further decompose it as follows
	\[
	\begin{split}
	\iint_{(B_{2\,m}\setminus B_m)\times (\mathbb{R}^n\setminus B_{2\,m})} &\frac{|\varphi_m(x)-\varphi_m(y)|^p\,|u(x)|^p}{|x-y|^{n+s\,p}}\,dx\,dy\\
	&=\iint_{(B_{2\,m}\setminus B_m)\times (\mathbb{R}^n\setminus B_{3\,m})} \frac{|\varphi_m(x)-\varphi_m(y)|^p\,|u(x)|^p}{|x-y|^{n+s\,p}}\,dx\,dy\\
	&+ \iint_{(B_{2\,m}\setminus B_m)\times (B_{3\,m}\setminus B_{2\,m})}\frac{|\varphi_m(x)-\varphi_m(y)|^p\,|u(x)|^p}{|x-y|^{n+s\,p}}\,dx\,dy\\
	&\le C\,\iint_{(B_{2\,m}\setminus B_m)\times (\mathbb{R}^n\setminus B_{3\,m})} \frac{|u(x)|^p}{|x-y|^{n+s\,p}}\,dx\,dy\\
	&+\frac{C}{m^p}\, \iint_{(B_{2\,m}\setminus B_m)\times (B_{3\,m}\setminus B_{2\,m})}\frac{|u(x)|^p}{|x-y|^{n+s\,p-p}}\,dx\,dy\\
	\end{split}
	\]
For the first integral, we can observe that,
if $x \in B_{2\,m}\setminus B_m$, then
\[
B_{m} (x) \subset B_{3\,m},
\]
hence $\mathbb{R}^n\setminus B_{3\,m} \subset \mathbb R^n \setminus B_{m}(x)$. This yields
\[
\begin{split}
\iint_{(B_{2\,m}\setminus B_m)\times (\mathbb{R}^n\setminus B_{3\,m})}& \frac{|u(x)|^p}{|x-y|^{n+s\,p}}\,dx\,dy\\
&= \int_{B_{2\,m}\setminus B_m}|u(x)|^p \left(\int_{\mathbb{R}^n\setminus B_{3\,m}} \frac{1}{|x-y|^{n+s\,p}}\,dy\right)\,dx\\
&\le \int_{B_{2\,m}\setminus B_m}|u(x)|^p \left(\int_{\mathbb{R}^n\setminus B_{m}(x)} \frac{1}{|x-y|^{n+s\,p}}\,dy\right)\,dx\\
&\le \frac{C}{m^{s\,p}}\,\int_{B_{2\,m}\setminus B_m}|u(x)|^p\,dx.
\end{split}
\]
The last integral converges to $0$, as we have shown while estimating $\mathcal{I}_1$. For the other remaining integral, we can use a similar estimate as for $\mathcal{I}_1$. We leave the details to the reader.
	\medskip

\noindent
	{\bf Estimate for $\mathcal{I}_5$.} This is the simplest term, we just observe that
	\[
	\lim_{n\to\infty} \mathcal{I}_5=0,
	\]
	by the Dominated Convergence Theorem. This concludes the proof.
\end{proof}

\subsection{Case $s\,p > n$}
In this case, the estimate on the truncation will be slightly worse. However, this is still sufficient for our scopes.

\begin{lemma}[Truncation lemma $s\,p>n$]
	\label{lm:truncation2}
	Let $s\in(0,1]$ and $1\le p<+\infty$ be such that $s\,p>n$.
	We set $\alpha=s-n/p$ and let
	\[
	u\in \Big\{  \varphi \in C^{0,\alpha} ( \mathbb{R}^n )\, :\,  \varphi(0)=0,\ [ \varphi ]_{W^{s,p} (\mathbb{R}^n)} < +\infty  \Big\}.
	\]
	If $(\varphi_m)_{m\in\mathbb{N}}\subset C^\infty_0(\mathbb{R}^n)$ is a sequence of non-negative cut-off functions such that $0\le \varphi_m\le 1$ and
	\[
	\varphi_m\equiv 0\quad \mbox{ on } B_m,\qquad \varphi_m\equiv 1\quad \mbox{ on } \mathbb{R}^n\setminus B_{2\,m} \qquad \mbox{ and }\qquad \|\nabla\varphi_m\|_{L^\infty(\mathbb{R}^n)}\le \frac{C}{m},
	\]
	then we have
	\[
	[\varphi_m\, u]_{W^{s,p}(\mathbb{R}^n)}\le C,
	\]
	for a constant $C>0$ not depending on $m$.
\end{lemma}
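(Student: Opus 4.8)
The plan is to run the same scheme as in the proof of Lemma \ref{lm:truncation}, decomposing the Gagliardo--Slobodecki\u{\i} seminorm of $\varphi_m\,u$ according to the three regions $B_m$, $B_{2\,m}\setminus B_m$ and $\mathbb{R}^n\setminus B_{2\,m}$, but replacing the use of the $L^{p^\star_s}$ summability of $u$ (unavailable here) by the H\"older bound
\[
|u(x)| = |u(x)-u(0)| \le [u]_{C^{0,\alpha}(\mathbb{R}^n)}\,|x|^\alpha,\qquad \alpha = s-\frac{n}{p},
\]
which holds precisely because $u(0)=0$. The case $s=1$ is elementary and I would dispatch it first: by the product rule $\nabla(\varphi_m\,u) = \varphi_m\,\nabla u + u\,\nabla\varphi_m$, and since $\nabla\varphi_m$ is supported in $B_{2\,m}\setminus B_m$ with $|\nabla\varphi_m|\le C/m$, one has $\|u\,\nabla\varphi_m\|_{L^p(\mathbb{R}^n)}^p \le C\,m^{-p}\int_{B_{2\,m}\setminus B_m}|u|^p\,dx \le C\,m^{\alpha p + n - p} = C$, using $\alpha\,p = p-n$. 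So from here on I assume $s\in(0,1)$.

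For $s\in(0,1)$, using $\varphi_m\,u \equiv 0$ on $B_m$ and $\varphi_m\,u \equiv u$ on $\mathbb{R}^n\setminus B_{2\,m}$, the double integral splits into five contributions $2\,\mathcal{I}_1 + 2\,\mathcal{I}_2 + \mathcal{I}_3 + 2\,\mathcal{I}_4 + \mathcal{I}_5$, exactly as in Lemma \ref{lm:truncation}. The term $\mathcal{I}_5 = \iint_{(\mathbb{R}^n\setminus B_{2\,m})^2}|u(x)-u(y)|^p\,|x-y|^{-n-s\,p}\,dx\,dy$ is bounded by $[u]_{W^{s,p}(\mathbb{R}^n)}^p/(s\,(1-s))$; and in $\mathcal{I}_3$, $\mathcal{I}_4$ the splitting $\varphi_m(x)\,u(x)-\varphi_m(y)\,u(y) = \varphi_m(x)\,(u(x)-u(y)) + (\varphi_m(x)-\varphi_m(y))\,u(y)$ isolates a "Sobolev part" again controlled by $[u]_{W^{s,p}(\mathbb{R}^n)}^p$. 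In each of the pieces that remain, the integrand carries either a factor $|\varphi_m(x)-\varphi_m(y)|^p \le C\,m^{-p}\,|x-y|^p$ (from the Lipschitz bound on $\varphi_m$) coupled with $|u|^p \le C\,m^{\alpha p}$ on $B_{2\,m}$, or, when the second variable runs over $\mathbb{R}^n\setminus B_{2\,m}$, a factor $|u(y)|^p\le C\,|y|^{\alpha p}$ coupled with $|x-y|^{-n-s\,p}\le C\,|y|^{-n-s\,p}$ (valid since $|x-y|\gtrsim |y|$ when $x\in B_{2\,m}$ and $|y|$ is large). A uniform dimensional count then closes every term: integrating $|x-y|^{p-n-s\,p}$ over a ball of radius $\sim m$ produces $\sim m^{p-s\,p}$ (here $s<1$ is used for local integrability), the annulus $B_{2\,m}\setminus B_m$ has volume $\sim m^n$, and $\int_{\{|y|\gtrsim m\}}|y|^{\alpha p - n - s\,p}\,dy\sim m^{-n}$, so that every surviving contribution scales like $m^{\alpha p + n - s\,p} = m^0$. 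This last exponent equality, which follows from $\alpha\,p = s\,p-n$, is exactly why one gets a bound independent of $m$ but \emph{not} decay.

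The only genuinely delicate term is the mixed one, $\mathcal{I}_4 = \iint_{(B_{2\,m}\setminus B_m)\times(\mathbb{R}^n\setminus B_{2\,m})}|\varphi_m(x)\,u(x)-u(y)|^p\,|x-y|^{-n-s\,p}\,dx\,dy$, where the growth of $u$ at infinity and the absence of any pointwise control on $\varphi_m$ over the annulus interact. As in Lemma \ref{lm:truncation}, I would further split the outer region into $B_{3\,m}\setminus B_{2\,m}$ and $\mathbb{R}^n\setminus B_{3\,m}$: over the far part one uses $|x-y|\ge |y|/3$ together with $\int_{\mathbb{R}^n\setminus B_{3\,m}}|y|^{\alpha p}\,|y|^{-n-s\,p}\,dy\sim m^{-n}$; over the near annulus one exploits that $\varphi_m\equiv 1$ outside $B_{2\,m}$ to write $|\varphi_m(x)-1| = |\varphi_m(x)-\varphi_m(y)|\le C\,m^{-1}\,|x-y|$, reducing the estimate to exactly the one carried out for $\mathcal{I}_1$. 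Tracking constants, all the bounds produced are of the form $C(n,p,s)\,\big(1+[u]_{C^{0,\alpha}(\mathbb{R}^n)}^p + [u]_{W^{s,p}(\mathbb{R}^n)}^p\big)$, hence independent of $m$, which is the assertion.
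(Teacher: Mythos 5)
Your proposal is correct and follows exactly the route the paper takes: decompose the seminorm into the five pieces $\mathcal{I}_1,\dots,\mathcal{I}_5$ as in Lemma~\ref{lm:truncation}, replace the $L^{p^\star_s}$-summability used there by the pointwise bound $|u(x)|\le [u]_{C^{0,\alpha}}\,|x|^\alpha$ (available since $u(0)=0$), and observe that the homogeneity identity $\alpha\,p = s\,p - n$ makes every surviving power of $m$ cancel, yielding an $m$-independent bound but no decay. The paper's own proof is essentially a one-paragraph pointer to Lemma~\ref{lm:truncation} plus the explicit estimate for $\mathcal{I}_2$; you have simply supplied the details that the authors leave to the reader, in the same way they intend.
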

\begin{proof}
	The proof is similar to the previous one. We deal again with the case $s\in(0,1)$. We still decompose the $(s,p)-$seminorm as before and then use the estimate
	\[
	|u(x)|=|u(x)-u(0)|\le C\, |x|^{s-\frac{n}{p}},\qquad \mbox{ for every } x\in\mathbb{R}^n,
	\]
	in place of the hypothesis $L^{p^\star_s}$ previously used, in order to estimate $\mathcal{I}_1$, $\mathcal{I}_3$ and $\mathcal{I}_4$. We leave the details to the reader. For $\mathcal{I}_2$, we observe that
	\[
	\mathcal{I}_2=\iint_{B_m\times (\mathbb{R}^n\setminus B_{2\,m})} \frac{|u(y)|^p}{|x-y|^{n+s\,p}}\,dx\,dy\le C\,\iint_{B_m\times (\mathbb{R}^n\setminus B_{2\,m})} \frac{|y|^{s\,p-n}}{|x-y|^{n+s\,p}}\,dx\,dy.
	\]
	We then use that
	\[
	|x-y|\ge |y|-|x|\ge |y|-\frac{|y|}{2}=\frac{|y|}{2},\qquad \mbox{ for }x\in B_m,\, y\in \mathbb{R}^n\setminus B_{2\,m}.
	\]
	Thus we obtain
	\[
	\mathcal{I}_2\le C\,\iint_{B_m\times (\mathbb{R}^n\setminus B_{2\,m})} |y|^{-2\,n}\,dx\,dy\le C.
	\]
	This concludes the proof.
\end{proof}

\subsection{Case $s\,p = n$}
Here we can prove a result similar to Lemma \ref{lm:truncation}. For this, we will need the Poincar\'e-Wirtinger inequality of Lemma \ref{lm:gracias} and the integrability information of Lemma \ref{lm:FeSt}.
\begin{lemma}[Truncation lemma $s\,p=n$]
\label{lm:truncation3}
Let $s\in(0,1]$ and $n\ge 1$ be such that $s<n$. Let
\[
u\in \Big\{  \varphi \in\mathcal{L}^{\frac{n}{s},n} (\mathbb{R}^n )\, :\,[ \varphi ]_{W^{s,\frac{n}{s}} (\mathbb{R}^n)} < +\infty  \Big\}.
\]
If $(\varphi_m)_{m\in\mathbb{N}}\subset C^\infty_0(\mathbb{R}^n)$ is a sequence of non-negative cut-off functions such that $0\le \varphi_m\le 1$
\[
\varphi_m\equiv 0\quad \mbox{ on } B_m,\qquad \varphi_m\equiv 1\quad \mbox{ on } \mathbb{R}^n\setminus B_{m^2} \qquad \mbox{ and }\qquad \|\nabla\varphi_m\|_{L^\infty(\mathbb{R}^n)}\le \frac{C}{m^2},
\]
then we have
\[
\lim_{m\to\infty}[\varphi_m\, (u-\overline{u}_{m^2})]_{W^{s,\frac{n}{s}}(\mathbb{R}^n)}=0,\qquad \mbox{ where } \overline{u}_{m^2}= \frac{1}{|B_{m^2}|}\,\int_{B_{m^2}} u\,dx.
\]
\end{lemma}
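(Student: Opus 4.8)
The plan is to follow the structure of the proof of Lemma~\ref{lm:truncation}, but to replace the global integrability $u\in L^{p^\star_s}(\mathbb R^n)$ (now unavailable, since $p^\star_s=+\infty$) by the two substitutes adapted to the conformal regime: the annular Poincar\'e-type inequality of Lemma~\ref{lm:gracias} and the weighted global estimate of Lemma~\ref{lm:FeSt}. This is precisely the role of the average $\overline u_{m^2}$: set $w_m\vcentcolon=u-\overline u_{m^2}$; then $w_m$ has zero mean on $B_{m^2}$, so both lemmas apply with $R=m^2$, while $w_m$ differs from $u$ only by a constant, so that $w_m(x)-w_m(y)=u(x)-u(y)$ and $[w_m]_{\mathcal L^{n/s,n}(\mathbb R^n)}=[u]_{\mathcal L^{n/s,n}(\mathbb R^n)}$; note also $w_m\in L^{n/s}_{\rm loc}(\mathbb R^n)$. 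As in Lemma~\ref{lm:truncation} I would first dispose of $s=1$ (where the seminorm is $\|\nabla(\,\cdot\,)\|_{L^n}$ and one argues by the classical logarithmic cut-off computation), and then assume $s\in(0,1)$, so that $p\vcentcolon=n/s>n$ and $n+sp=2n$.

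Then I would expand $\tfrac1{s(1-s)}[\varphi_m w_m]_{W^{s,p}(\mathbb R^n)}^p$ along the product partition of $\mathbb R^n\times\mathbb R^n$ induced by the sets $B_m$, $A_m\vcentcolon=B_{m^2}\setminus B_m$, $E_m\vcentcolon=\mathbb R^n\setminus B_{m^2}$, exactly as in the proof of Lemma~\ref{lm:truncation} with $B_{2m}$ replaced by $B_{m^2}$; this produces $2\mathcal I_1+2\mathcal I_2+\mathcal I_3+2\mathcal I_4+\mathcal I_5$, where the $(B_m,B_m)$ block vanishes because $\varphi_m\equiv0$ there. The term $\mathcal I_5=\iint_{E_m\times E_m}\frac{|u(x)-u(y)|^p}{|x-y|^{2n}}\,dx\,dy$ tends to $0$ by dominated convergence, its domain escaping to infinity and its integrand being summable since $[u]_{W^{s,p}(\mathbb R^n)}<+\infty$. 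For $\mathcal I_1$, $\mathcal I_3$ and one half of $\mathcal I_4$, I would use $|\varphi_m(x)-\varphi_m(y)|\le C\,m^{-2}|x-y|$ together with the elementary bound $\int_{B_{2m^2}(y)}|x-y|^{-(2n-p)}\,dx\le C\,m^{2(p-n)}$ (which needs $2n-p<n$, true here); this reduces each of these pieces to a multiple of
\[
\frac1{m^{2n}}\int_{B_{m^2}\setminus B_m}|w_m|^p\,dx .
\]
Applying Lemma~\ref{lm:gracias} with $x_0=0$, $r=m$, $R=m^2$ and recalling $sp=n$, the latter quantity is bounded by $C\iint_{(B_{m^2}\setminus B_m)\times B_{m^2}}\frac{|u(x)-u(y)|^p}{|x-y|^{2n}}\,dx\,dy$, which again tends to $0$ by dominated convergence. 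For $\mathcal I_3$ and $\mathcal I_4$ one also splits off, via $|a+b|^p\le 2^{p-1}(|a|^p+|b|^p)$, a genuine slice $\iint\frac{|u(x)-u(y)|^p}{|x-y|^{2n}}\,dx\,dy$ of the Gagliardo seminorm over a domain escaping to infinity, which vanishes for the same reason.

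The term forcing the use of the weighted estimate is $\mathcal I_2=\iint_{B_m\times E_m}\frac{|w_m(y)|^p}{|x-y|^{2n}}\,dx\,dy$: for $x\in B_m$ and $|y|>m^2$ one has $|x|<m<|y|/m\le|y|/2$ once $m\ge2$, hence $|x-y|\ge|y|/2$ and $\mathcal I_2\le C\,m^n\int_{|y|>m^2}\frac{|w_m(y)|^p}{|y|^{2n}}\,dy$. I would then verify the pointwise inequality
\[
\frac{m^n}{|y|^{2n}}\le\frac{C(n,p)}{m^n}\cdot\frac1{m^{2n}+|y|^n\bigl|\log(|y|/m^2)\bigr|^{p+2}},\qquad |y|>m^2,\ m\ge1,
\]
which reduces, after multiplying out, to $|y|^{2n}>m^{4n}$ and to the boundedness of $t\mapsto(\log t)^{p+2}/t^n$ on $[1,+\infty)$, and conclude from Lemma~\ref{lm:FeSt} applied to $w_m$ (zero mean on $B_{m^2}$) that $\mathcal I_2\le C\,m^{-n}\,[u]_{\mathcal L^{n/s,n}(\mathbb R^n)}^p\to0$. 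For $\mathcal I_4$ the only genuinely delicate point is to use the rearrangement
\[
\varphi_m(x)w_m(x)-w_m(y)=\bigl(\varphi_m(x)-\varphi_m(y)\bigr)w_m(x)+\varphi_m(y)\bigl(u(x)-u(y)\bigr),
\]
i.e. to attach the cut-off difference to $w_m(x)$, whose argument lies in the controlled annulus $B_{m^2}\setminus B_m$, rather than to $w_m(y)$ with $y$ unbounded; one then splits $E_m$ into $B_{2m^2}\setminus B_{m^2}$ (where the Lipschitz bound on $\varphi_m$ applies) and $\mathbb R^n\setminus B_{2m^2}$ (where $|x-y|\ge|y|/2$ and $|\varphi_m|\le1$), both leading once more to $m^{-2n}\int_{B_{m^2}\setminus B_m}|w_m|^p$. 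The main obstacle is exactly this bookkeeping in $\mathcal I_2$ and $\mathcal I_4$: the naive estimates only yield boundedness, and one must squeeze out the decaying factor $m^{-n}$ in $\mathcal I_2$ and exploit the vanishing of the annular Poincar\'e quotient in $\mathcal I_4$; everything else is a routine transcription of the proof of Lemma~\ref{lm:truncation}.
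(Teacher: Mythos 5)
Your proposal follows the same decomposition of the seminorm and uses the same two key ingredients as the paper's proof—the annular Poincar\'e inequality of Lemma~\ref{lm:gracias} for $\mathcal I_1,\mathcal I_3,\mathcal I_4$ and the weighted Fefferman--Stein estimate of Lemma~\ref{lm:FeSt} for $\mathcal I_2$, with the identical rearrangement $\varphi_m(x)w_m(x)-w_m(y)=(\varphi_m(x)-\varphi_m(y))w_m(x)+(u(x)-u(y))$ in $\mathcal I_4$—so it is essentially the paper's argument. Your direct pointwise bound $\tfrac{m^n}{|y|^{2n}}\le \tfrac{C}{m^n}\bigl(m^{2n}+|y|^n|\log(|y|/m^2)|^{p+2}\bigr)^{-1}$ for $\mathcal I_2$ is a small streamlining of the paper's two-step reduction through $|y|^{3n/2}$ and an auxiliary exponent $\alpha\in(0,n/2)$, and in fact yields the cleaner decay rate $m^{-n}$ in place of $m^{-2\alpha}$.
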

\begin{proof}
We only consider the case $s\in(0,1)$, the local case being much simpler.
We decompose the seminorm as usual
\[
\begin{split}
\frac{1}{s\,(1-s)}\,[\varphi_m\, (u-\overline{u}_{m^2})]_{W^{s,\frac{n}{s}}(\mathbb{R}^n)}^\frac{n}{s}&=\iint_{\mathbb{R}^n\times\mathbb{R}^n} \frac{|\varphi_m(x)\,(u(x)-\overline{u}_{m^2})-\varphi_m(y)\,(u(y)-\overline{u}_{m^2})|^\frac{n}{s}}{|x-y|^{2\,n}}\,dx\,dy\\
&=2\,\iint_{B_m\times (B_{m^2}\setminus B_m)} \frac{|\varphi_m(y)\,(u(y)-\overline{u}_{m^2})|^\frac{n}{s}}{|x-y|^{2\,n}}\,dx\,dy\\
&+2\,\iint_{B_m\times (\mathbb{R}^n\setminus B_{m^2})} \frac{|u(y)-\overline{u}_{m^2}|^\frac{n}{s}}{|x-y|^{2\,n}}\,dx\,dy\\
&+\iint_{(B_{m^2}\setminus B_m)\times (B_{m^2}\setminus B_m)} \frac{|\varphi_m(x)\,(u(x)-\overline{u}_{m^2})-\varphi_m(y)\,(u(y)-\overline{u}_{m^2})|^\frac{n}{s}}{|x-y|^{2\,n}}\,dx\,dy\\
&+2\,\iint_{(B_{m^2}\setminus B_m)\times (\mathbb{R}^n\setminus B_{m^2})} \frac{|\varphi_m(x)\,(u(x)-\overline{u}_{m^2})-(u(y)-\overline{u}_{m^2})|^\frac{n}{s}}{|x-y|^{2\,n}}\,dx\,dy\\
&+\iint_{(\mathbb{R}^n\setminus B_{m^2})\times (\mathbb{R}^n\setminus B_{m^2})} \frac{|u(x)-u(y)|^\frac{n}{s}}{|x-y|^{2\,n}}\,dx\,dy\\
&=2\,\mathcal{I}_1+2\,\mathcal{I}_2+\mathcal{I}_3+2\,\mathcal{I}_4+\mathcal{I}_5.
\end{split}
\]
 We treat each integral separately.
\vskip.2cm\noindent
{\bf Estimate for $\mathcal{I}_1$.}
For the term $\mathcal{I}_1$, we first observe that for every $y\in B_{m^2}\setminus B_m$ and every $x\in B_m$, we have
\[
\begin{split}
|\varphi_m(y)|=|\varphi_m(y)-\varphi_m(x)|&\le \frac{C}{m^2}\,|y-x|.
\end{split}
\]
This entails
\[
\begin{split}
\mathcal{I}_1&=\iint_{B_m\times (B_{m^2}\setminus B_m)} \frac{|\varphi_m(y)\,(u(y)-\overline{u}_{m^2})|^\frac{n}{s}}{|x-y|^{2\,n}}\,dx\,dy\\
&\le \frac{C}{(m^2)^\frac{n}{s}}\,\iint_{B_m\times (B_{m^2}\setminus B_m)} \frac{|u(y)-\overline{u}_{m^2}|^\frac{n}{s}}{|x-y|^{2\,n-\frac{n}{s}}}\,dx\,dy.
\end{split}
\]
By noticing that
\[
B_{m}\subset B_{m^2+m}(y),\qquad \mbox{ for } y\in B_{m^2}\setminus B_m,
\]
we get
\[
\int_{B_m} \frac{1}{|x-y|^{2\,n-\frac{n}{s}}}\,dx\le \int_{B_{m^2+m}(y)}\frac{1}{|x-y|^{2\,n-\frac{n}{s}}}\,dx\le C\,(m^2)^{\frac{n}{s}-n},
\]
and thus
\[
\mathcal{I}_1\le \frac{C}{m^{2\,n}}\,\int_{B_{m^2}\setminus B_m} |u(y)-\overline{u}_{m^2}|^\frac{n}{s}\,dy.
\]
If we now apply Lemma \ref{lm:gracias} on the right-hand side, we get
\[
\mathcal{I}_1\le C\,\iint_{(B_{m^2}\setminus B_m)\times B_{m^2}} \frac{|u(x)-u(y)|^\frac{n}{s}}{|x-y|^{2\,n}}\,dx\,dy.
\]
By using that $[u]_{W^{s,n/s}(\mathbb{R}^n)}<+\infty$ and the Dominated Convergence Theorem, we get that the last term above converges to $0$, as $m$ goes to $\infty$.
\medskip

\noindent
{\bf Estimate for $\mathcal{I}_2$.} This term now is quite delicate, here we need a global integrability information on $u$.
We observe that for every $m\ge 2$, $x\in B_m$ and $y\in\mathbb{R}^n\setminus B_{m^2}$, we have
\[
|x-y|\ge |y|-|x|\ge |y|-\sqrt{|y|}\ge \frac{|y|}{2}.
\]
Thus we get
\begin{equation}
\label{addedafter}
\begin{split}
\mathcal{I}_2&\le C\,m^n\, \int_{\mathbb{R}^n\setminus B_{m^2}}\frac{|u(y)-\overline{u}_{m^2}|^\frac{n}{s}}{|y|^{2\,n}}\,dy\\
&\le C\,\int_{\mathbb{R}^n\setminus B_{m^2}} |y|^\frac{n}{2}\,\frac{|u(y)-\overline{u}_{m^2}|^\frac{n}{s}}{|y|^{2\,n}}\,dy= \int_{\mathbb{R}^n\setminus B_{m^2}}\frac{|u(y)-\overline{u}_{m^2}|^\frac{n}{s}}{|y|^{\frac{3\,n}{2}}}\,dy.
\end{split}
\end{equation}
We now observe that for $m\ge 2$, $|y|\ge m^2$ and $0<\alpha<n/2$, we have
\[
|y|^\frac{3\,n}{2}\ge |y|^\frac{n}{2}\,m^{2\,n}\ge |y|^\alpha\,m^{2\,n},
\]
and also
\[
\begin{split}
|y|^\frac{3\,n}{2}&=|y|^n\,\left(\log\left(\frac{|y|}{m^2}\right)\right)^{\frac{n}{s}+2}\,|y|^\frac{n}{2}\,\left(\log\left(\frac{|y|}{m^2}\right)\right)^{-\frac{n}{s}-2}\\
&\ge |y|^n\,\left(\log\left(\frac{|y|}{m^2}\right)\right)^{\frac{n}{s}+2}\,\frac{1}{C_\alpha}\,|y|^\alpha,
\end{split}
\]
for a suitable constant $C_\alpha>1$. By combining the two previous estimates, we then obtain
\[
\begin{split}
|y|^\frac{3\,n}{2}&\ge \frac{|y|^\alpha}{2}\,\left(m^{2\,n}+\frac{1}{C_\alpha}\,|y|^n\,\left(\log\left(\frac{|y|}{m^2}\right)\right)^{\frac{n}{s}+2}\right)\\
&\ge \frac{|y|^\alpha}{2\,C_\alpha}\,\left(m^{2\,n}+|y|^n\,\left(\log\left(\frac{|y|}{m^2}\right)\right)^{\frac{n}{s}+2}\right).
\end{split}
\]
We can then estimate the integral in the right-hand side of \eqref{addedafter} as follows
\[
\begin{split}
\int_{\mathbb{R}^n\setminus B_{m^2}}\frac{|u(y)-\overline{u}_{m^2}|^\frac{n}{s}}{|y|^{\frac{3\,n}{2}}}\,dy&\le 2\,C_\alpha\,\int_{\mathbb{R}^n\setminus B_{m^2}}\frac{|u(y)-\overline{u}_{m^2}|^\frac{n}{s}}{m^{2\,n}+|y|^n\,\left(\log\dfrac{|y|}{m^2}\right)^{\frac{n}{s}+2}}\, \frac{dy}{|y|^\alpha}\\
&\le \frac{2\,C_\alpha}{m^{2\,\alpha}}\,\int_{\mathbb{R}^n}\frac{|u(y)-\overline{u}_{m^2}|^\frac{n}{s}}{m^{2\,n}+|y|^n\,\left|\log\dfrac{|y|}{m^2}\right|^{\frac{n}{s}+2}}\, dy.
\end{split}
\]
By using Lemma \ref{lm:FeSt} with $p=n/s$,
we get that $\mathcal{I}_2$ converges to $0$, as $m$ goes to $\infty$.
\medskip

\noindent
{\bf Estimate for $\mathcal{I}_3$.} For the third integral, we have
\[
\begin{split}
\mathcal{I}_3&\le 2^{\frac{n}{s}-1}\,\iint_{(B_{m^2}\setminus B_m)\times (B_{m^2}\setminus B_m)} \frac{|\varphi_m(x)-\varphi_m(y)|^\frac{n}{s}}{|x-y|^{2\,n}}\,|u(x)-\overline{u}_{m^2}|^\frac{n}{s}\,dx\,dy\\
&+2^{\frac{n}{s}-1}\,\iint_{(B_{m^2}\setminus B_m)\times (B_{m^2}\setminus B_m)} \frac{|u(x)-u(y)|^\frac{n}{s}}{|x-y|^{2\,n}}\,|\varphi_m(y)|^\frac{n}{s}\,dx\,dy\\
&\le \frac{C}{(m^2)^\frac{n}{s}}\,\iint_{(B_{m^2}\setminus B_m)\times (B_{m^2}\setminus B_m)} \frac{1}{|x-y|^{2\,n-\frac{n}{s}}}\,|u(x)-\overline{u}_{m^2}|^\frac{n}{s}\,dx\,dy\\
&+2^{\frac{n}{s}-1}\,\iint_{(B_{m^2}\setminus B_m)\times (B_{m^2}\setminus B_m)} \frac{|u(x)-u(y)|^\frac{n}{s}}{|x-y|^{2\,n}}\,dx\,dy.
\end{split}
\]
Now, the second integral converges to $0$ by the Dominated Convergence Theorem. The first one can be handled as we did for $\mathcal{I}_1$.
\medskip

\noindent
{\bf Estimate for $\mathcal{I}_4$.}
Here, we observe that
\[
\begin{split}
\mathcal{I}_4&=\iint_{(B_{m^2}\setminus B_m)\times (\mathbb{R}^n\setminus B_{m^2})} \frac{|\varphi_m(x)\,(u(x)-\overline{u}_{m^2})-(u(y)-\overline{u}_{m^2})|^\frac{n}{s}}{|x-y|^{2\,n}}\,dx\,dy\\
&\le 2^{\frac{n}{s}-1}\,\iint_{(B_{m^2}\setminus B_m)\times (\mathbb{R}^n\setminus B_{m^2})} \frac{|\varphi_m(x)-\varphi_m(y)|^\frac{n}{s}\,|u(x)-\overline{u}_{m^2}|^\frac{n}{s}}{|x-y|^{2\,n}}\,dx\,dy \\
&+2^{\frac{n}{s}-1}\,\iint_{(B_{m^2}\setminus B_m)\times (\mathbb{R}^n\setminus B_{m^2})} \frac{|u(x)-u(y)|^\frac{n}{s}}{|x-y|^{2\,n}}\,dx\,dy,
\end{split}
\]
where we used that $\varphi_m=1$ on the complement of $B_{m^2}$. The last integral converges to $0$, while the first one can be decomposed as follows:
\[
\begin{split}
\iint_{(B_{m^2}\setminus B_m)\times (\mathbb{R}^n\setminus B_{m^2})} &\frac{|\varphi_m(x)-\varphi_m(y)|^\frac{n}{s} \,|u(x)-\overline{u}_{m^2}|^\frac{n}{s}}{|x-y|^{2\,n}}\,dx\,dy\\
&=\iint_{(B_{m^2}\setminus B_m)\times (\mathbb{R}^n\setminus B_{2\,m^2})} \frac{|\varphi_m(x)-\varphi_m(y)|^\frac{n}{s} \,|u(x)-\overline{u}_{m^2}|^\frac{n}{s}}{|x-y|^{2\,n}}\,dx\,dy\\
&+ \iint_{(B_{m^2}\setminus B_m)\times (B_{2\,m^2}\setminus B_{m^2})}\frac{|\varphi_m(x)-\varphi_m(y)|^\frac{n}{s} \,|u(x)-\overline{u}_{m^2}|^\frac{n}{s}}{|x-y|^{2\,n}}\,dx\,dy\\
&\le C\,\iint_{(B_{m^2}\setminus B_m)\times (\mathbb{R}^n\setminus B_{2\,m^2})} \frac{|u(x)-\overline{u}_{m^2}|^\frac{n}{s}}{|x-y|^{2\,n}}\,dx\,dy\\
&+\frac{C}{(m^2)^\frac{n}{s}}\, \iint_{(B_{m^2}\setminus B_m)\times (B_{2\,m^2}\setminus B_{m^2})}\frac{|u(x)-\overline{u}_{m^2}|^\frac{n}{s}}{|x-y|^{2\,n-\frac{n}{s}}}\,dx\,dy.
\end{split}
\]
For the first integral, we can observe that,
if $x \in B_{m^2}\setminus B_m$, then
\[
B_{m^2} (x) \subset B_{2\,m^2},
\]
hence $\mathbb{R}^n\setminus B_{2\,m^2} \subset \mathbb R^n \setminus B_{m^2}(x)$. This yields
\[
\begin{split}
\iint_{(B_{m^2}\setminus B_m)\times (\mathbb{R}^n\setminus B_{2\,m^2})}& \frac{|u(x)-\overline{u}_{m^2}|^\frac{n}{s}}{|x-y|^{2\,n}}\,dx\,dy\\
&= \int_{B_{m^2}\setminus B_m}|u(x)-\overline{u}_{m^2}|^\frac{n}{s} \left(\int_{\mathbb{R}^n\setminus B_{2\,m^2}} \frac{1}{|x-y|^{2\,n}}\,dy\right)\,dx\\
&\le \int_{B_{m^2}\setminus B_m}|u(x)-\overline{u}_{m^2}|^\frac{n}{s} \left(\int_{\mathbb{R}^n\setminus B_{m^2}(x)} \frac{1}{|x-y|^{2\,n}}\,dy\right)\,dx\\
&\le \frac{C}{(m^2)^n}\,\int_{B_{m^2}\setminus B_m}|u(x)-\overline{u}_{m^2}|^\frac{n}{s}\,dx.
\end{split}
\]
Proceeding as for $\mathcal{I}_1$, the last term converges to $0$.
\par
For the second integral we use a similar estimate as for $\mathcal{I}_1$. More precisely, we observe that for $x\in B_{m^2}$, we have
$
B_{2\,m^2}\setminus B_{m^2}\subset B_{3\,m^2}(x).
$
Thus we can estimate
\[
\begin{split}
\frac{1}{(m^2)^\frac{n}{s}} \iint_{(B_{m^2}\setminus B_m)\times (B_{2\,m^2}\setminus B_{m^2})}
	&\frac{|u(x)-\overline{u}_{m^2}|^\frac{n}{s}}{|x-y|^{2\,n-\frac{n}{s}}}\,dx\,dy \\
	&= \frac{1}{(m^2)^\frac{n}{s}}\, \int_{B_{m^2}\setminus B_m}|u(x)-\overline{u}_{m^2}|^\frac{n}{s} \left(\int_{B_{2\,m^2}\setminus B_{m^2}} \frac{1}{|x-y|^{2\,n - \frac n s}}\,dy\right)\,dx \\
	& \le \frac{1}{(m^2)^\frac{n}{s}}\, \int_{B_{m^2}\setminus B_m}|u(x)-\overline{u}_{m^2}|^\frac{n}{s} \left(\int_{B_{3\,m^2 (x)}} \frac{1}{|x-y|^{2\,n - \frac n s}}\,dy\right)\,dx \\
	& =\frac{C}{(m^2)^n}\,\int_{B_{m^2}\setminus B_m}|u(x)-\overline{u}_{m^2}|^\frac{n}{s} d x.
\end{split}
\]
The latter is again the same term previously treated.
\vskip.2cm\noindent
{\bf Estimate for $\mathcal{I}_5$.} As usual, this is the simplest term, it results
\[
\lim_{n\to\infty} \mathcal{I}_5=0,
\]
by the Dominated Convergence Theorem. The desired conclusion now follows.
\end{proof}

\section*{Acknowledgments}
An anonymous referee is gratefully acknowledged for the careful reading of the manuscript and for the many useful comments. We also wish to thank Alessandro Monguzzi for having kindly drawn our attention to the paper \cite{MPS}.
L. Brasco was financially supported by the grant FFABR {\it Fondo Per il Finanziamento delle attivit\`a di base}, of the Italian Government.
The work of D. G\'omez-Castro  and J. L. V\'azquez were funded by grant PGC2018-098440-B-I00 from  the  MICINN,  of the Spanish Government.
The research of D. G\'omez-Castro was supported by the Advanced Grant {\it Nonlocal -- CPD (Nonlocal PDEs for Complex Particle Dynamics:
Phase Transitions, Patterns and Synchronization)} of the European Research Council Executive Agency (ERC) under the European Union's Horizon 2020 research and innovation program (grant agreement No.\ 883363).
J.~L.~V\'azquez is an Honorary Professor at Universidad Complutense de Madrid.

\end{document}